\numberwithin{equation}{section}
\newtheorem{thm}[equation]{Theorem} 
\newtheorem{lem}[equation]{Lemma}
\newtheorem{cor}[equation]{Corollary}
\newtheorem{prp}[equation]{Proposition}
\theoremstyle{definition}
\newtheorem{defn}[equation]{Definition}
\newtheorem{rmk}[equation]{Remark}
\numberwithin{equation}{section}
\crefname{equation}{}{}
\crefname{lem}{}{}
\newcommand\abs[2][empty]{\csname#1\endcsname \lvert{#2}\csname#1\endcsname\rvert}
\newcommand\doublebar[2][empty]{\csname#1\endcsname \lVert{#2}\csname#1\endcsname\rVert}
\newcommand\dist{\mathop{\mathrm{dist}}\nolimits}
\newcommand\Div{\mathop{\mathrm{div}}\nolimits}
\newcommand\Tr{\mathop{\mathrm{Tr}}\nolimits}
\newcommand\diam{\mathop{\mathrm{diam}}\nolimits}
\newcommand\R{\mathbb{R}}
\newcommand\Z{\mathbb{Z}}
\newcommand\N{\mathbb{N}}
\newcommand\1{\mathbf{1}}
\def\HyPsd@CatcodeWarning#1{}\makeatother
\title[]{Robin harmonic measure with a variable permeability parameter}
\author{Svitlana Mayboroda, Alberto Pacati}
\begin{document}
\begin{abstract}
In this paper, we study the behavior of solutions to the Robin problem in bounded one-sided NTA domains with an Ahlfors--David regular boundary, extending the results of \cite{DavDEMM} to the case of a nonconstant Robin parameter. In particular, we prove the mutual absolute continuity of the Robin harmonic measure with respect to the surface measure in the setting of variable permeability.
\end{abstract}
\keywords{harmonic measure, absolute continuity, H\"older continuity, Harnack inequality, elliptic operators, Green functions, Robin, Neumann}
\subjclass{Primary 35J25, Secondary 35C15, 35J08}
\maketitle

\setcounter{tocdepth}{2}
\tableofcontents

\setcounter{tocdepth}{3}
\clearpage

\section{Introduction}
We study the Robin boundary value problem for the elliptic operator $L=-\Div(A\nabla)$ on a bounded open set $\Omega\subset\R^n$, $n\ge 3$. When the boundary of $\Omega$ is smooth, the problem admits the following strong formulation
\begin{equation}\label{eqn:Robin}
\begin{cases}
L(u):=-\Div(A\nabla u)=0&\text{in}\quad\Omega,\\
\partial_{\nu}^Au+a\Tr u=af&\text{on}\quad\partial\Omega,
\end{cases}
  \end{equation}
where the conormal derivative $\partial_{\nu}^Au$ is defined by $\partial_{\nu}^Au:=\nu\cdot A\nabla u$, with $\nu$ denoting the outer unit normal to $\partial\Omega$ and $\Tr$ stands for the trace on the boundary. The literature on this subject is abundant; see, e.g., \cite{BuGN22}, \cite{CafK16}, \cite{BasBC08}, \cite{LaS04}, \cite{BucNNT}, to mention just a few references. Recently, in \cite{DavDEMM}, the authors developed the appropriate elliptic theory on a $1$-sided NTA domain $\Omega$ with an Ahlfors regular boundary of dimension $d>n-2$, that is, on a set that is quantitatively open and connected, whose boundary has quantitative dimension $d$. In this setting, it is shown that, for any positive constant parameter $a>0$, one can define a Robin harmonic measure $\{\omega_{R, L}^x\}_{x\in\Omega}$, which is used to represent solutions of \eqref{eqn:Robin}.  This framework is analogous to the well-known construction of the Dirichlet harmonic measure $\omega_{D, L}$, first introduced in \cite{RiR16}. 
Contrary to the classical situation, though, the authors of \cite{DavDEMM} proved that $\omega_{R, L}$ is quantitatively mutually absolutely continuous with respect to the restriction to $\partial\Omega$ of the $d$-dimensional Hausdorff measure $\mathcal{H}^d$, for any $\Omega$ as above, and any $A$ uniformly elliptic. This is quite surprising when compared with the behavior of $\omega_{D, L}$. Indeed, in the Dirichlet case, the dimension and structure of the harmonic measure are closely linked to the regularity of the domain and to the regularity of the coefficients of the elliptic operator $L=-\Div(A\nabla)$. For instance, in \cite{AzzHMMT20}, it is shown that, in a one-sided NTA domain $\Omega\subset\R^n$ with $(n-1)$-Ahlfors regular boundary, the Dirichlet harmonic measure $\omega_{D,-\Delta}$ is quantitatively mutually absolutely continuous with respect to $\mathcal{H}^{n-1}$ if and only if the boundary of $\Omega$ is uniformly rectifiable, i.e., if it contains big pieces of Lipschitz images of $\R^{n-1}$. On the other hand, there are examples of elliptic operators $L$ for which $\mathcal{H}^{n-1}$ and $\omega_{D, L}$ are mutually singular even on a ball (see \cite{ModM80} and \cite{CafFK81}).
  
 Essentially, since the endpoints $a\to\infty$ and $a\to0$ of \eqref{eqn:Robin} correspond, respectively, to the Dirichlet and Neumann problems, the Robin problem can be viewed as an intermediate case. As explained in \cite{DavDEMM}, this heuristic point of view allows one to use Neumann-type techniques (such as a boundary Harnack inequality for solutions with zero Neumann data) to improve the properties of $\omega_{R, L}$. 
 
 The purpose of this paper is to extend the results of \cite{DavDEMM} to the case of nonconstant permeability $a\ge 0$. Specifically, we consider parameters $a$ belonging to suitable $L^q(\partial\Omega)$ spaces. Informally, we impose a control from above on $a$, allowing \eqref{eqn:Robin} to reduce to the Dirichlet problem $\Tr u=f$ only on a negligible portion of the boundary. On the other hand, we realized that remaining close to the Neumann problem $\partial_{\nu}^Au= 0$ is advantageous, and the only control from below on $a$ that we require is that it is not identically zero on $\partial\Omega$. Our main result is the quantitative mutual absolute continuity of $\omega_{R,L}$ with respect to the measure $\mu=a\,d\mathcal{H}^{n-1}|_{\partial\Omega}$. To be more precise, we now state the main theorems of the paper.
 \begin{thm}[Mutual absolute continuity]\label{thm:abs:cont}
 Let $\Omega\subset\R^n$ be a bounded one-sided NTA domain, and let $\sigma$ be a $d$-Ahlfors regular measure supported on $\partial\Omega$, where $n-2<d<n$. Let $0\le a\in L^q(\partial\Omega,\sigma)$ be non-identically zero, for some $q>\frac{d}{d-n+2}$. Then, for any $X\in \Omega$, we have
 \[
 \mu\ll\omega^X_{R,L}\ll\mu,
 \]
 where $d\mu=a\,d\sigma$ (see \eqref{mu}), and $\omega^X_{R, L}$ denotes the Robin harmonic measure associated with the uniformly elliptic operator $L$, with pole at $X$.
 \end{thm}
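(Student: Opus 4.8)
The plan is to read off both inclusions from the weak formulation of the Robin problem; the role of the hypothesis $q>\frac{d}{d-n+2}$ is precisely to make the Robin bilinear form $B(u,v)=\int_{\Omega}A\nabla u\cdot\nabla v\,dX+\int_{\partial\Omega}a\,\Tr u\,\Tr v\,d\sigma$ and its data functional bounded on $H^{1}(\Omega)$. On a $d$-ADR boundary with $d>n-2$ one has a trace embedding $\Tr\colon H^{1}(\Omega)\to L^{2d/(n-2)}(\sigma)$, and since $q>\frac{d}{d-n+2}$ forces the conjugate exponent to satisfy $q'<\frac{d}{n-2}$, Hölder's inequality shows that $B$ is bounded and that $\varphi\mapsto\int_{\partial\Omega}af\,\Tr\varphi\,d\sigma$ lies in $(H^{1}(\Omega))^{*}$ for every $f\in C(\partial\Omega)$; coercivity of $B$ uses only that $a$ is not identically zero, via a Poincaré inequality. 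Lax--Milgram then produces, for each $f\in C(\partial\Omega)$, a unique weak solution $u_{f}\in H^{1}(\Omega)$; interior De Giorgi--Nash--Moser regularity, the Robin maximum principle, and the fact that $u_{f}\equiv1$ when $f\equiv1$ make $f\mapsto u_{f}(X)$ a positive, unital, bounded functional on $C(\partial\Omega)$, whose Riesz representative is the probability measure $\omega^{X}_{R,L}$. This construction of the variable-$a$ Robin harmonic measure, carried out in the body of the paper adapting \cite{DavDEMM}, is the only place the hypothesis on $q$ is spent.

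For $\omega^{X}_{R,L}\ll\mu$: testing the equation for $u_{f}$ against $u_{f}$ and using Cauchy--Schwarz in $L^{2}(\mu)$ gives $B(u_{f},u_{f})\le\int_{\partial\Omega}af^{2}\,d\sigma=\|f\|_{L^{2}(\mu)}^{2}$, so by coercivity and the interior sup bound $|u_{f}(X)|\le C_{X}\|f\|_{L^{2}(\mu)}$ for all $f\in C(\partial\Omega)$. Since $C(\partial\Omega)$ is dense in $L^{2}(\mu)$, the functional $f\mapsto u_{f}(X)=\int f\,d\omega^{X}_{R,L}$ extends boundedly to $L^{2}(\mu)$, and the Riesz representation theorem yields a necessarily nonnegative $g_{X}\in L^{2}(\mu)$ with $\int f\,d\omega^{X}_{R,L}=\int fg_{X}\,d\mu$ for all $f\in C(\partial\Omega)$; by uniqueness in Riesz--Markov, $d\omega^{X}_{R,L}=g_{X}\,d\mu$, hence $\omega^{X}_{R,L}\ll\mu$.

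For $\mu\ll\omega^{X}_{R,L}$: by the previous step this is equivalent to $g_{X}>0$ $\mu$-a.e. Suppose not, so $F:=\{g_{X}=0\}$ has $\mu(F)>0$; by inner regularity pick a compact $K\subseteq F$ with $\mu(K)>0$, and let $f_{j}\in C(\partial\Omega)$ with $\mathbf{1}_{K}\le f_{j}\le1$ and $f_{j}\downarrow\mathbf{1}_{K}$. Testing the equation for $u_{f_{j}}$ against the constant $1\in H^{1}(\Omega)$ gives $\int_{\partial\Omega}a\,\Tr u_{f_{j}}\,d\sigma=\int_{\partial\Omega}af_{j}\,d\sigma=\int_{\partial\Omega}f_{j}\,d\mu\ge\mu(K)>0$ for all $j$. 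On the other hand $u_{f_{j}}(X)=\int f_{j}g_{X}\,d\mu\to\int_{K}g_{X}\,d\mu=0$, so, the $u_{f_{j}}$ being nonnegative weak solutions of $Lu=0$ in $\Omega$, the interior Harnack inequality forces $u_{f_{j}}\to0$ locally uniformly in $\Omega$; since $0\le u_{f_{j}}\le1$ and the energies $B(u_{f_{j}},u_{f_{j}})$ are uniformly bounded by the estimate above, $\{u_{f_{j}}\}$ is bounded in $H^{1}(\Omega)$, and any weak $H^{1}$ subsequential limit coincides a.e.\ in $\Omega$ with the locally uniform limit $0$, hence is $0$ in $H^{1}(\Omega)$. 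Since $T\varphi:=\int_{\partial\Omega}a\,\Tr\varphi\,d\sigma$ is a bounded functional on $H^{1}(\Omega)$ — the same bound used in the construction — it is weakly continuous, so $\int_{\partial\Omega}a\,\Tr u_{f_{j}}\,d\sigma=T(u_{f_{j}})\to T(0)=0$, contradicting the lower bound $\mu(K)>0$. Therefore $g_{X}>0$ $\mu$-a.e., i.e.\ $\mu\ll\omega^{X}_{R,L}$; the argument is uniform in $X\in\Omega$, which completes the proof of Theorem~\ref{thm:abs:cont}.

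The soft part of the proof is thus very short; the real difficulty, and the true content of the paper, lies in the prerequisites: the trace theorem on $d$-ADR boundaries, the boundedness and coercivity of $B$ for non-constant $a\in L^{q}$, and the attendant elliptic theory (existence, positivity, interior regularity, Riesz representation) underpinning $\omega^{X}_{R,L}$, together with the verification that $q>\frac{d}{d-n+2}$ is the natural threshold for all the Hölder/trace estimates to close. A heavier but more informative route, closer to the Neumann-technique heuristic of \cite{DavDEMM}, would be to construct a Robin Green function $G_{R}(X,\cdot)$ — which, unlike the Dirichlet one, does not vanish on $\partial\Omega$ — to show by a boundary Harnack inequality for solutions with vanishing co-normal (Neumann) data that it admits a boundary trace defined $\sigma$-a.e., and to deduce from Green's identity the representation $d\omega^{X}_{R,L}(y)=\Tr_{y}G_{R}(X,y)\,d\mu(y)$; then $\omega^{X}_{R,L}\ll\mu$ is automatic, while $\mu\ll\omega^{X}_{R,L}$ reduces to $\Tr_{y}G_{R}(X,y)>0$ for $\sigma$-a.e.\ $y$, which would follow from a Carleson-type lower bound $\Tr_{y}G_{R}(X,y)\gtrsim G_{R}(X,A_{r}(y))>0$ at corkscrew points $A_{r}(y)$. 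This second route is the one needed for a scale-invariant, $A_{\infty}$-type strengthening of Theorem~\ref{thm:abs:cont}.
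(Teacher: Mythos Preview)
Your argument is correct, and it is genuinely different from the paper's. The paper proves Theorem~\ref{thm:abs:cont} by first constructing the Robin Green function $G_{R,A^T}(\cdot,X)$ (Theorem~\ref{thm:green}), obtaining the explicit density formula $d\omega^X_{R,L}=G_{R,A^T}(\cdot,X)\,d\mu$ from \eqref{eqn:green:rep}; then $\omega^X_{R,L}\ll\mu$ is immediate, while $\mu\ll\omega^X_{R,L}$ is reduced to showing $G_{R,A^T}(\cdot,X)>0$ on $\partial\Omega$, which is done via the boundary Harnack inequality at small scales \eqref{eqn:harnack:small:scales} together with the interior Harnack inequality. Your route avoids both the Green function and the boundary Harnack machinery: the energy identity $b(u_f,u_f)\le\|f\|_{L^2(\mu)}^2$ plus coercivity and interior Moser give the $L^2(\mu)$-continuity of $f\mapsto u_f(X)$, hence $\omega^X_{R,L}\ll\mu$ by Riesz in $L^2(\mu)$; and the test $\varphi\equiv 1$ combined with interior Harnack and weak $H^1$-compactness yields $\mu\ll\omega^X_{R,L}$. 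This is more elementary and shows that the qualitative mutual absolute continuity really only needs the interior theory plus the well-posedness of the Robin problem. What your approach does not give is the identification of the Radon--Nikodym density as the boundary trace of the Green function, which is exactly the input the paper uses in \eqref{eqn:green:quant:abs:cont} to prove the quantitative Theorems~\ref{thm:abs:cont:small} and~\ref{thm:abs:cont:big}; you anticipate this correctly in your final paragraph.
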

 \begin{thm}[Quantitative mutual absolute continuity at small scales]\label{thm:abs:cont:small}
 Let $\Omega$, $\sigma$, $a$, $d$, and $q$ be as in Theorem \ref{thm:abs:cont}, and let $L=-\Div(A\nabla)$ be a uniformly elliptic operator.
 There exists a constant $C\ge 1$ (depending only on the geometric constants of $(\Omega,\sigma)$, the ellipticity constant of $A$, and $q$), such that for all $x_0\in\partial\Omega$, $r>0$ with 
 \[ \|a\|_{L^q(\Delta(x_0,4r))}r^{2-n+d-\frac{d}{q}}\le 1\] and with $\mu(\Delta(x_0,r))>0$, for all $X\in \overline{\Omega}\setminus B(x_0,Cr)$ and all Borel measurable sets $E\subset \partial\Omega\cap B(x_0,r)$, we have
\begin{equation}\label{eqn:quant:abs:cont:small}
C^{-1}\frac{\mu(E)}{\mu(\Delta(x_0,r))}\le\frac{\omega^X_{R,L}(E)}{\omega^X_{R,L}(\Delta(x_0,r))}\le C\frac{\mu(E)}{\mu(\Delta(x_0,r))}.
\end{equation}
\end{thm}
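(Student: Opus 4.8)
The plan is to reduce \eqref{eqn:quant:abs:cont:small} to a \emph{boundary Harnack estimate for the Robin Green function at small scales}. Let $G_{R,L}(X,\cdot)$ denote the Robin Green function with pole at $X$, so $-\Div(A\nabla G_{R,L}(X,\cdot))=\delta_X$ in $\Omega$ and $\partial_{\nu}^A G_{R,L}(X,\cdot)+a\,\Tr G_{R,L}(X,\cdot)=0$ on $\partial\Omega$; Green's identity for the Robin problem yields the representation $d\omega^X_{R,L}=\Tr G_{R,L}(X,\cdot)\,d\mu$. Granting it, it suffices to prove that for $X\in\overline\Omega\setminus B(x_0,Cr)$ and $r$ as in the hypothesis,
\[
\Tr G_{R,L}(X,y)\approx G_{R,L}(X,A_r(x_0))\qquad\text{for }\mu\text{-a.e. }y\in\Delta(x_0,r),
\]
with constants depending only on the quantities listed, where $A_r(x_0)\in\Omega$ is a corkscrew point for $\Delta(x_0,r)$: integrating this against $d\mu$ over $E$ and over $\Delta(x_0,r)$ gives both inequalities of \eqref{eqn:quant:abs:cont:small}, with $C$ the ratio of the comparability constants. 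The guiding idea is that when $a$ is small in the scale-invariant sense quantified by the hypothesis, $G_{R,L}(X,\cdot)$ is a controllable perturbation of an $L$-solution with vanishing conormal derivative, for which the boundary Harnack principle is available — the ``Neumann technique'' emphasized in the introduction.

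Concretely, I would fix a small $c$ depending only on $n$, $d$, $q$, the ellipticity of $A$ and the geometric constants for $(\Omega,\sigma)$, and for $y\in\Delta(x_0,r)$ compare $G_{R,L}(X,\cdot)$ on $\Omega\cap B(y,cr)$ with the $L$-harmonic function $h_y$ there agreeing with $G_{R,L}(X,\cdot)$ on $\partial B(y,cr)\cap\Omega$ and with $\partial_{\nu}^A h_y=0$ on $\partial\Omega\cap B(y,cr)$ (taking $C$ large keeps $X\notin B(y,cr)$). The difference $v:=G_{R,L}(X,\cdot)-h_y$ is $L$-harmonic in $\Omega\cap B(y,cr)$, vanishes on $\partial B(y,cr)\cap\Omega$, and satisfies $\partial_{\nu}^A v=-a\,\Tr G_{R,L}(X,\cdot)$ on $\partial\Omega\cap B(y,cr)$. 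In the Caccioppoli and Moser estimates for $v$ the one new term is $\int_{\Delta(y,cr)}a\,\Tr G_{R,L}(X,\cdot)\,\Tr v\,d\sigma$; by H\"older's inequality and the trace--Sobolev embedding $W^{1,2}(\Omega\cap B(y,cr))\hookrightarrow L^{p}(\partial\Omega\cap B(y,cr),\sigma)$, valid for $p$ below the critical exponent $p_{\mathrm{crit}}:=\tfrac{2d}{n-2}$, it is dominated by a constant times
\[
\|a\|_{L^q(\Delta(y,cr))}\,(cr)^{2-n+d-\frac dq}
\]
times the Dirichlet energies of $v$ and $G_{R,L}(X,\cdot)$ on $\Omega\cap B(y,cr)$ (plus rescaled $L^2$-norms). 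The threshold $q>\tfrac d{d-n+2}$ is exactly what permits $p<p_{\mathrm{crit}}$, equivalently $2-n+d-\tfrac dq>0$, so the displayed quantity is $\lesssim c^{\,2-n+d-d/q}\,\|a\|_{L^q(\Delta(x_0,4r))}\,r^{2-n+d-d/q}\le c^{\,2-n+d-d/q}$ (using $\Delta(y,cr)\subset\Delta(x_0,4r)$), hence as small as we wish once $c$ is small, and the same computation at scale $\rho<cr$ gains an extra factor $(\rho/cr)^{2-n+d-d/q}$. Thus the boundary term is genuinely lower order at every scale below $cr$. Feeding the perturbation estimate back along the balls $B(y,4^{-k}cr)$, $k\ge 0$ — the geometric series of errors converging by the strict subcriticality even though only ``$\le1$'', not ``$\le\varepsilon$'', is assumed at scale $r$ — and combining with the interior Harnack inequality and the zero--Neumann--data boundary Harnack and Carleson estimates for the $h_y$'s, one obtains the upper bound $\Tr G_{R,L}(X,\cdot)\lesssim G_{R,L}(X,A_r(x_0))$ $\mu$-a.e.\ and the matching lower bound on a small surface ball $\Delta(y,c'r)$, using Harnack chains in the one-sided NTA domain $\Omega$ to identify the values of the $h_y$'s at corkscrew points with $G_{R,L}(X,A_r(x_0))$.

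Covering $\Delta(x_0,r)$ by a number (bounded in terms of $c'$) of surface balls $\Delta(y_i,c'r)$ with $y_i\in\Delta(x_0,r)$ — for each of which the hypothesis at scale $r$ forces the needed smallness at scale $cr$, and whose corkscrew points are all Harnack-comparable to $A_r(x_0)$ — then propagates the two-sided bound to all of $\Delta(x_0,r)$, proving Theorem~\ref{thm:abs:cont:small}; the case $X\in\partial\Omega$ follows by a duality/limiting argument. Theorem~\ref{thm:abs:cont} is a routine consequence: given $X\in\Omega$ and a Borel $E$ with $\mu(E)=0$ (resp.\ $\omega^X_{R,L}(E)=0$), cover $E$, away from a fixed interior pole, by surface balls at scales small enough that \eqref{eqn:quant:abs:cont:small} applies, and transfer to $X$ via the Harnack inequality and Harnack chains for Robin solutions together with the mutual absolute continuity of $\omega^X_{R,L}$ and $\omega^Y_{R,L}$ for interior poles.

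The main obstacle is the boundary Harnack estimate for $G_{R,L}(X,\cdot)$ itself — in particular its nondegeneracy up to the boundary — in the rough setting of a one-sided NTA domain with Ahlfors--David regular boundary and a variable permeability $a$: since $G_{R,L}(X,\cdot)$ does not have vanishing Neumann data, the Neumann boundary Harnack cannot be applied directly, and the whole role of the hypotheses $q>\tfrac d{d-n+2}$ and $\|a\|_{L^q(\Delta(x_0,4r))}\,r^{2-n+d-d/q}\le1$ is to render $a\,\Tr G_{R,L}(X,\cdot)$ a subcritical, absorbable perturbation uniformly down to arbitrarily small scales, so that the perturbation-plus-iteration scheme can borrow the zero--Neumann--data boundary Harnack and Carleson estimates that are the Neumann tools referred to in the introduction.
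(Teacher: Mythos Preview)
Your reduction is the same as the paper's: via the representation $d\omega^X_{R,L}=G_{R,A^T}(\cdot,X)\,d\mu$, inequality \eqref{eqn:quant:abs:cont:small} follows from a two--sided Harnack bound for $G_{R,A^T}(\cdot,X)$ on $B(x_0,r)\cap\overline\Omega$. Where you diverge is in how that Harnack bound is obtained. The paper does \emph{not} set up a comparison with auxiliary Neumann solutions $h_y$ and iterate a perturbation estimate across scales. Instead, earlier in the paper a clean \emph{Harnack inequality at small scales} for positive Robin solutions with data $0\le f\le u$ (in particular $f\equiv0$) is proved once and for all (Theorem~\ref{thm:harnack:small}): this comes from the Neumann Harnack inequality (itself obtained by Moser iteration with the boundary Poincar\'e inequality of Lemma~\ref{lem:Poincare:boundary}) together with the observation that a Robin solution is a Neumann solution with data $(f-u)a$, and the absorbable error term is controlled by the corkscrew value via Lemma~\ref{lem:control:sol:cork}. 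The proof of Theorem~\ref{thm:abs:cont:small} is then a two--line covering argument: choose $\lambda$ small enough that $(8\lambda r)^{2-n+d-d/q}\|a\|_{L^q(\Delta(x_0,4r))}\le c_0$, cover $B(x_0,r)\cap\overline\Omega$ by boundedly many balls of radius $\lambda r$, and on each apply either interior Harnack or \eqref{eqn:harnack:small:scales}.

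Your scheme could in principle be pushed through, but it is considerably heavier and leaves real work undone: you need existence and estimates for the mixed Dirichlet--Neumann solutions $h_y$ in this rough (one--sided NTA, Ahlfors regular) setting, you need to make the ``geometric series of errors'' precise, and the nondegeneracy (lower) bound---which you correctly flag as the main obstacle---is only asserted. The paper's modular route avoids all of this: the perturbation idea you describe is precisely what is packaged into Theorem~\ref{thm:harnack:small}, and once that theorem is in hand the Green function needs no special treatment.
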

\begin{thm}[Quantitative mutual absolute continuity at large scales]\label{thm:abs:cont:big}
 Let $\Omega$, $\sigma$, $a$, $d$, and $q$ be as in Theorem \ref{thm:abs:cont}, and let $L=-\Div(A\nabla)$ be a uniformly elliptic operator.
There exist $C\ge 1$ and $\gamma>0$ (both depending on the geometric constants of $(\Omega,\sigma)$, the ellipticity constant of $A$, and $q$) such that for all $x_0\in\partial\Omega$, $r>0$ with 
\[\|a\|_{L^q(\Delta(x_0,4r))}r^{2-n+d-\frac{d}{q}}\ge 1\]
 and with $\mu(\Delta(x_0,r))>0$,
 for all $X\in \overline{\Omega}\setminus B(x_0,Cr)$ and all Borel measurable sets $E\subset \partial\Omega\cap B(x_0,r)$, we have
\begin{multline}\label{eqn:abs:cont:large}
C^{-1}\bigl(\|a\|_{L^q(\Delta(x_0,4r))}\,r^{2-n+d-\frac{d}{q}}\bigr)^{-\gamma}\frac{\mu(E)}{\mu(\Delta(x_0,r))}\\
\le\frac{\omega^X_{R,L}(E)}{\omega^X_{R,L}(\Delta(x_0,r))}\\
\le C\bigl(\|a\|_{L^q(\Delta(x_0,4r))}\,r^{2-n+d-\frac{d}{q}}\bigr)^{\gamma}\frac{\mu(E)}{\mu(\Delta(x_0,r))}.
\end{multline}
\end{thm}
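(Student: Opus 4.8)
The plan is to transfer the clean, scale‑invariant comparison of Theorem~\ref{thm:abs:cont:small} from the scales where it applies up to the scale $r$, paying a controlled power of the dimensionless Robin parameter
\[
\tau(x_0,s):=\|a\|_{L^q(\Delta(x_0,4s))}\,s^{2-n+d-\frac dq}.
\]
Since $q>\frac{d}{d-n+2}$, the exponent $\alpha:=2-n+d-\frac dq$ is strictly positive, so $\tau(x_0,\cdot)$ is essentially increasing in the scale, up to the fluctuation of the localized $L^q$ norm of $a$ (controlled by H\"older's inequality against the $d$-ADR measure $\sigma$). As the hypothesis is $\tau(x_0,r)\ge1$, I would first introduce the \emph{critical radius} $r_*=r_*(x_0)\le r$ with $\tau(x_0,r_*)\simeq1$; positivity of $\alpha$ then gives $r/r_*\lesssim\tau(x_0,r)^{1/\alpha}$, hence $\log(r/r_*)\lesssim\log\tau(x_0,r)$. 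On every surface ball sitting inside some $\Delta(y,r_*)$ the hypotheses of Theorem~\ref{thm:abs:cont:small} hold, so $\omega^X_{R,L}$ and $\mu$ are mutually absolutely continuous there with scale‑invariant constants; letting the radius tend to $0$, the density $k^X:=d\omega^X_{R,L}/d\mu$ exists and, for $\mu$-a.e.\ $y\in\Delta(x_0,r)$,
\[
C^{-1}\,\frac{\omega^X_{R,L}(\Delta(y,r_*))}{\mu(\Delta(y,r_*))}\le k^X(y)\le C\,\frac{\omega^X_{R,L}(\Delta(y,r_*))}{\mu(\Delta(y,r_*))}.
\]
Thus, using the ADR doubling of $\mu$ and the doubling of $\omega^X_{R,L}$ on sub‑balls of $\Delta(x_0,r)$ (here $X\notin B(x_0,Cr)$), \eqref{eqn:abs:cont:large} is reduced to comparing $\omega^X_{R,L}(\Delta(y,r_*))/\mu(\Delta(y,r_*))$ with $\omega^X_{R,L}(\Delta(x_0,r))/\mu(\Delta(x_0,r))$ up to a factor $\tau(x_0,r)^{\pm\gamma}$, as $y$ ranges over $\Delta(x_0,r)$.

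The core is a \emph{one-step estimate} with a constant $\Lambda\ge1$ independent of the scale: for $r_*\le s\le r/2$ and $\Delta(y,2s)\subset\Delta(x_0,2r)$,
\[
\Lambda^{-1}\,\frac{\omega^X_{R,L}(\Delta(y,s))}{\mu(\Delta(y,s))}\le\frac{\omega^X_{R,L}(\Delta(y,2s))}{\mu(\Delta(y,2s))}\le\Lambda\,\frac{\omega^X_{R,L}(\Delta(y,s))}{\mu(\Delta(y,s))}.
\]
I would obtain this from (i) the CFMS-type identity $\omega^X_{R,L}(\Delta(y,s))\simeq s^{n-2}G(X,A_{y,s})$, where $A_{y,s}$ is a corkscrew point for $\Delta(y,s)$ and $G$ the Green function of $L$, which together with interior Green function estimates and Harnack chains (not touching $X$) controls the Dirichlet-type part of the ratio by a fixed multiple of $2^{\,n-2-d}$; and (ii) a Caccioppoli and De Giorgi--Nash--Moser argument showing that, in the regime $s\ge r_*$ where $\tau(x_0,s)\ge\tau(x_0,r_*)\simeq1$, the homogeneous Robin condition is a perturbation of the Dirichlet one of relative size $\lesssim\tau(x_0,s)^{-1}$; since $\tau(x_0,2^{-k}r)\simeq(2^{-k}r/r_*)^{\alpha}$ along the iteration and $\alpha>0$, the sum $\sum_{r_*\le2^{-k}r\le r}\tau(x_0,2^{-k}r)^{-1}$ is bounded, so these corrections contribute only a bounded multiplicative factor and $\Lambda$ can indeed be chosen uniform. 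Iterating the one-step estimate $\lesssim\log(r/r_*)\lesssim\log\tau(x_0,r)$ times produces the factor $\Lambda^{\,c\log\tau(x_0,r)}=\tau(x_0,r)^{\,c\log\Lambda}$, i.e.\ the announced $\tau(x_0,r)^{\pm\gamma}$ with $\gamma$ a suitable multiple of $\log\Lambda$; combined with the reduction of the previous paragraph this gives the upper bound in \eqref{eqn:abs:cont:large}, and the lower bound follows by the symmetric argument (or by applying the upper bound to $\partial\Omega\cap B(x_0,r)\setminus E$ and using the already two-sided control of $\omega^X_{R,L}(\Delta(x_0,r))$).

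The step I expect to be the main obstacle is precisely the uniformity in (i)--(ii): because $a$ is only assumed to lie in $L^q(\sigma)$, neither is $\omega^X_{R,L}$ doubling with a constant uniform across \emph{all} scales, nor are the CFMS constants scale-independent — they carry the local value of $\tau$. Closing the iteration therefore hinges on quantifying how much $\|a\|_{L^q}$ on $\Delta(x_0,2^{-k}r)$ can differ from its value on $\Delta(x_0,4r)$, and on the summability just described; both rest decisively on the strict inequality $q>\frac{d}{d-n+2}$ (equivalently $\alpha>0$), which makes the off-diagonal contribution of $a$ at small scales decay like a positive power of the scale. Once this is in place, the remaining ingredients — the maximum principle, boundary Harnack for $L$-solutions with homogeneous Robin data, and the change-of-pole formula for $\omega^X_{R,L}$, all available from the theory developed for constant $a$ and adapted as in the proof of Theorem~\ref{thm:abs:cont:small} — assemble routinely.
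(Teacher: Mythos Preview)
Your overall strategy --- descend to a critical scale $r_*\simeq r\,\tau(x_0,r)^{-1/\alpha}$ where the small-scale result applies, then climb back through $O(\log\tau(x_0,r))$ steps, collecting a fixed constant per step --- is exactly the paper's. But your implementation has genuine gaps. You invoke ``the ADR doubling of $\mu$'', yet $\mu=a\,d\sigma$ with $a\in L^q(\sigma)$ merely nonnegative; only $\sigma$ is Ahlfors regular, and $\mu$ need not be doubling --- in fact $\mu(\Delta(y,s))$ may vanish on many surface balls, so your one-step ratio $\omega^X_{R,L}(\Delta(y,s))/\mu(\Delta(y,s))$ is not even well defined throughout the iteration. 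The CFMS relation $\omega^X_{R,L}(\Delta(y,s))\simeq s^{\,n-2}G(X,A_{y,s})$ is a Dirichlet statement; for the Robin measure one has instead $\omega^X_{R,L}(\Delta(y,s))=\int_{\Delta(y,s)}G_{R,A^T}(\cdot,X)\,d\mu$, so the correct companion of $G$ is $\mu(\Delta(y,s))$, not $s^{n-2}$, and no identity of the form you quote is proved in the paper. Finally, the claim that for $\tau\gtrsim1$ the homogeneous Robin condition is a perturbation of the Dirichlet one of size $\lesssim\tau^{-1}$ is a heuristic that would require substantial new analysis; the paper never argues this way.

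The point you are missing is that the Robin Green function \emph{is} the density: by \eqref{eqn:harm:green:rep}, $d\omega^X_{R,L}/d\mu=G_{R,A^T}(\cdot,X)$ on $\partial\Omega$. Hence for $E\subset\Delta(x_0,r)$,
\[
\frac{\inf_{\Delta(x_0,r)}G_{R,A^T}(\cdot,X)}{\sup_{\Delta(x_0,r)}G_{R,A^T}(\cdot,X)}
\;\le\;\frac{\omega^X_{R,L}(E)/\mu(E)}{\omega^X_{R,L}(\Delta(x_0,r))/\mu(\Delta(x_0,r))}
\;\le\;\frac{\sup_{\Delta(x_0,r)}G_{R,A^T}(\cdot,X)}{\inf_{\Delta(x_0,r)}G_{R,A^T}(\cdot,X)},
\]
with no doubling of $\mu$, no CFMS, and no comparison to Dirichlet required. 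The theorem thus reduces to an oscillation bound for a single nonnegative Robin solution with zero data --- namely $G_{R,A^T}(\cdot,X)$ --- on $B(x_0,r)\cap\Omega$. The paper then chooses $\lambda\simeq\tau(x_0,r)^{-1/\alpha}$ so that the small-scale Harnack inequality (Theorem~\ref{thm:harnack:small}) applies on every ball $B(x,\lambda r)$ with $x\in\Delta(x_0,2r)$, giving $G_{R,A^T}(\cdot,X)\approx G_{R,A^T}(\xi_x,X)$ there; interior Harnack chains of length $\lesssim\log(1/\lambda)\lesssim\log\tau(x_0,r)$ connect each corkscrew point $\xi_x$ to a fixed corkscrew point for $x_0$ at scale $r$, and this produces precisely the factor $\tau(x_0,r)^{\gamma}$.
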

We will define, in Section~2, what a one-sided NTA domain is,  what we mean by the geometric constants of $(\Omega,\sigma)$, and what we mean by the ellipticity constant of $A$.

We end the introduction by briefly describing the structure of the paper. In Section~2, we define all the objects we work with and collect the necessary preliminaries, with particular emphasis on suitable Poincar\'e inequalities. In Section~3, we establish existence and uniqueness for solutions of the Robin problem \eqref{eqn:Robin}, as well as for solutions of the Poisson problem with zero Robin boundary data. In Section~4, we develop new elements of the elliptic theory for solutions to the Neumann problem and, in particular, prove a boundary Harnack inequality for solutions with nonzero Neumann data. This Harnack inequality will be used to show that Neumann solutions are continuous up to the boundary. In Section~5, we return to the Robin problem and obtain continuity of solutions up to the boundary by observing that solutions of \eqref{eqn:Robin} are also solutions of a suitable Neumann problem. Moreover, we prove the Robin Harnack inequality at small scales, which, in analogy with \cite{DavDEMM}, is the main ingredient in the proofs of Theorems~\ref{thm:abs:cont:small} and~\ref{thm:abs:cont:big}. In Section~6, we adopt a more Dirichlet-type approach: we prove a maximum principle and use the continuity results of Section~4 to define the Robin harmonic measure and the Robin Green function, both of which are then used to represent solutions of \eqref{eqn:Robin}. Finally, using this representation formula, we prove our main theorems.

In comparison with \cite{DavDEMM}, the main novelty of our work is that, even though $a$ may be large, we are still able to view the Robin problem as a perturbation of the Neumann problem. This perspective allows us to make more effective use of interior-type techniques to study solutions. In particular, we can prove a new boundary Harnack inequality by following essentially the same argument as for the interior Harnack inequality for weak solutions of $-\Div(A\nabla u) = g$ with $g\in L^q$ for $q$ sufficiently large (see \cite[Chapter~4]{HanL00}). The key reason why we can proceed in this way, despite working in a more general setting than \cite{DavDEMM}, is our improved Poincar\'e inequality at the boundary (see \eqref{eqn:Poincare:bdry:ball}).

\textbf{Acknowledgements}. The authors are partially supported by the Simons Initiative of Geometry of Flows BD-Targeted-00017375, SM, and the Simons Collaboration on Localization of Waves, 563916 SM.
\section{Set up and Poincar\'e inequalities}
Throughout the paper, we assume that the domain $\Omega\subset\R^n$ satisfies the following geometric assumptions.

\begin{defn}

We say that $\Omega$ is a one-sided NTA domain if it satisfies the following openness and connectedness conditions.

\begin{enumerate}

\item[(H1)] (Interior corkscrew condition) There exists a constant $M > 1$ such that for every $z \in \partial \Omega$ and every $0 < r \le \diam(\Omega)$, there exists a point $A_r(z) \in \Omega \cap B(z,r)$ satisfying $\delta(A_r(z)) \ge M^{-1} r$. Such a point is called a corkscrew point for $z$ at scale $r$. Here, and throughout the paper, $\delta$ denotes the distance to $\partial\Omega$.

\item[(H2)] (Interior Harnack chains) There exists a constant $M > 1$ such that for all $0 < r \le \diam(\Omega)$, all $\epsilon > 0$, all integers $k \ge 1$, and all points $x,y \in \Omega$ with $d(x,y) = r$ and $\min\{\delta(x),\delta(y)\} \ge \epsilon$ and $2^k \epsilon \ge r$, there exist points $x_1,\dots,x_{Mk} \in \Omega$ such that $x_1 = x$, $x_{Mk} = y$, and

\[
 x_{i+1} \in B\bigl(x_i, \delta(x_i)/2\bigr) \quad \text{for } i < Mk.
\]

\end{enumerate}

\end{defn}

We also impose assumptions on the boundary of $\Omega$.
\begin{defn}
Let $0 < d \le n$. We say that $\partial \Omega$ is $d$-Ahlfors--David regular if it satisfies the following condition.
\begin{enumerate}

\item[(H3)] (Ahlfors--David regularity) There exists a Borel measure $\sigma$ supported on $\partial \Omega$ and a constant $M \ge 1$ such that, for every $z \in \partial \Omega$ and every $0 < r \le \diam(\Omega)$,

\begin{equation}\label{eqn:ahlf}
M^{-1} r^d \le \sigma(\Delta(z,r)) \le M r^d.
\end{equation}

Here $\Delta(z,r)$ denotes the surface ball $B(z,r) \cap \partial \Omega$.

\end{enumerate}

\end{defn}

Throughout the present work, $\Omega$ will be a bounded open set in $\mathbb{R}^n$ that satisfies (H1) and (H2), $\partial \Omega$ will satisfy (H3) for some $n-2 < d < n$, and we fix a measure $\sigma$ satisfying \eqref{eqn:ahlf}. The geometric constants of the pair $(\Omega,\sigma)$ are the constants $M$, $d$, and $n$ appearing in (H1), (H2), and (H3).

For $a \ge 0$ in $L^1_{\mathrm{loc}}(\partial \Omega, \sigma)$, we define the measure $\mu$ to be the Borel measure absolutely continuous with respect to $\sigma$ with Radon–Nikodym derivative $\frac{d\mu}{d\sigma} = a$. Namely,

\begin{equation}\label{mu}
\mu(E) = \int_{E} a \, d\sigma.
\end{equation}

We work with the standard Sobolev space $W = W^{1,2}(\Omega)$, equipped with the usual norm

\[
\|u\|_W := \biggl( \int_{\Omega} u^2 \, dx + \int_{\Omega} |\nabla u|^2 \, dx \biggr)^{1/2}.
\]
We first recall some properties of our Sobolev space. The next theorem can be found in \cite[Theorem 2.1]{DavDEMM} and follows by combining results from \cite{Jon81}, \cite{HajKT08}, and \cite{ArfR}.

\begin{thm}

Let $(\Omega,\sigma)$ be as above. Then

\begin{equation}
\text{the canonical injection} \quad W^{1,2}(\Omega) \to L^2(\Omega) \quad \text{is compact},
\end{equation}

\begin{equation}
\text{there exists a bounded trace operator } \Tr: W^{1,2}(\Omega) \to L^2(\partial\Omega,\sigma),\,\, \text{which is compact},
\end{equation}

\begin{equation}
\text{the Sobolev norm } \|u\|_W \text{ is equivalent to } \|u\|_{\Tr} := \bigl(\|\nabla u\|^2_{L^2(\Omega)} + \|\Tr u\|^2_{L^2(\sigma)}\bigr)^{1/2},
\end{equation}

\begin{equation}
C_c^\infty(\R^n) \quad \text{is dense in} \quad W^{1,2}(\Omega),
\end{equation}
and

\begin{equation}\label{eqn:cont:lp}
W^{1,2}(\Omega) \subset L^p(\Omega) \quad \text{for any} \quad p \in \biggl[1, \frac{2n}{n-2}\biggr].
\end{equation}

\end{thm}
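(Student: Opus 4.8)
The plan is to deduce all five statements, following \cite[Theorem 2.1]{DavDEMM}, from one global extension theorem together with classical Euclidean Sobolev theory and a trace theorem on Ahlfors--David regular sets. The key point is that a bounded one sided NTA domain is a uniform domain, hence an $(\varepsilon,\delta)$-domain in the sense of Jones; therefore by \cite{Jon81} there is a bounded linear extension operator $E\colon W^{1,2}(\Omega)\to W^{1,2}(\R^n)$ with $Eu|_\Omega=u$ and $\|Eu\|_{W^{1,2}(\R^n)}\le C\|u\|_{W^{1,2}(\Omega)}$, the constant $C$ depending only on the geometric constants of $(\Omega,\sigma)$. Everything else will be bootstrapped from $E$.

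Granting $E$, three of the statements are immediate. For density of $C^\infty_c(\R^n)$: given $u\in W^{1,2}(\Omega)$, multiply $Eu$ by a fixed smooth cutoff equal to $1$ on a neighbourhood of $\overline\Omega$ (possible since $\Omega$ is bounded) and mollify; the resulting functions lie in $C^\infty_c(\R^n)$ and converge to $Eu$ in $W^{1,2}(\R^n)$, so their restrictions converge to $u$ in $W^{1,2}(\Omega)$. For the $L^p$ embedding: $E$ followed by the Sobolev inequality $W^{1,2}(\R^n)\hookrightarrow L^{2n/(n-2)}(\R^n)$ (valid for $n\ge 3$) and restriction gives $W^{1,2}(\Omega)\subset L^{2n/(n-2)}(\Omega)$; since $|\Omega|<\infty$, Hölder's inequality then covers every $p\in[1,2n/(n-2)]$. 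For compactness of $W^{1,2}(\Omega)\to L^2(\Omega)$: a bounded sequence in $W^{1,2}(\Omega)$ is sent by $E$ to a bounded sequence in $W^{1,2}(\R^n)$ supported in a fixed ball containing $\overline\Omega$, and Rellich--Kondrachov extracts a subsequence converging in $L^2$ of that ball, hence in $L^2(\Omega)$.

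For the trace operator I would invoke the trace theory for Sobolev functions on $d$-sets. Since $d>n-2$, functions in $W^{1,2}(\R^n)$ have traces on $\partial\Omega$ (defined $\sigma$-a.e., e.g. through averages over shrinking balls), and by the results of \cite{HajKT08} and \cite{ArfR} the trace map is bounded from $W^{1,2}(\R^n)$ into the Besov-type space $B^{s}_{2,2}(\partial\Omega,\sigma)$ with $s=1-\tfrac{n-d}{2}>0$, which embeds compactly into $L^2(\partial\Omega,\sigma)$ precisely because $s>0$ on a compact Ahlfors regular set. Precomposing with $E$ yields a bounded and compact operator $\Tr\colon W^{1,2}(\Omega)\to L^2(\partial\Omega,\sigma)$. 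Finally, for the norm equivalence, one inequality is just $\|\Tr u\|_{L^2(\sigma)}+\|\nabla u\|_{L^2(\Omega)}\le C\|u\|_W$ from the previous step; for the reverse it suffices to prove the Poincaré-type bound $\|u\|_{L^2(\Omega)}\le C(\|\nabla u\|_{L^2(\Omega)}+\|\Tr u\|_{L^2(\sigma)})$, which I would obtain by contradiction: if it failed there would exist $u_k$ with $\|u_k\|_{L^2(\Omega)}=1$ and $\|\nabla u_k\|_{L^2(\Omega)}+\|\Tr u_k\|_{L^2(\sigma)}\to 0$; then $(u_k)$ is bounded in $W^{1,2}(\Omega)$, a subsequence converges in $L^2(\Omega)$ to some $u$ with $\|u\|_{L^2(\Omega)}=1$ and $\nabla u=0$, so $u$ is a nonzero constant by connectedness of $\Omega$; but compactness of $\Tr$ gives $\Tr u=\lim_k\Tr u_k=0$, forcing the constant to vanish, a contradiction.

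The main obstacle is the trace step. The extension theorem of Jones and the ensuing Euclidean arguments are standard, and the Poincaré inequality is a soft compactness argument; what is genuinely delicate is establishing that $\Tr$ is bounded and, above all, compact in this low-regularity setting. This is where the hypothesis $d>n-2$ is essential: it is exactly the threshold guaranteeing both that $W^{1,2}$-traces exist $\sigma$-almost everywhere on the $d$-dimensional set $\partial\Omega$ and that the relevant Besov embedding gains the strict positive smoothness $s=\tfrac{d-n+2}{2}$ needed for compactness into $L^2(\sigma)$.
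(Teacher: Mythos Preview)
Your proposal is correct and matches the paper's approach: the paper does not give a self-contained proof but simply cites \cite[Theorem~2.1]{DavDEMM} and notes that the result follows by combining \cite{Jon81}, \cite{HajKT08} and \cite{ArfR}, which is exactly the route you sketch (Jones extension for uniform domains, Euclidean Sobolev/Rellich theory, and the trace theory on $d$-Ahlfors regular sets with $d>n-2$). Your elaboration of the norm-equivalence via a compactness contradiction is standard and consistent with how the cited references are used.
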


The improved integrability of Sobolev functions in \eqref{eqn:cont:lp} can be quantified through the following strengthened Poincar\'e inequality (see \cite[Theorem 5.24]{DavFM20} and \cite[Lemma 2.2]{DavDEMM}).

\begin{thm}[Poincar\'e inequality]\label{thm:Poinc:int}

Let $z_0 \in \partial\Omega$, $c>0$, $0<r\le \diam(\Omega)$, $k \le \frac{n}{n-2}$, and let $E \subset B(z_0,r) \cap \Omega$ be such that $|E| \ge c r^n$. Then there exist constants $C>0$ and $K>1$ such that, for any $u \in W^{1,2}(\Omega)$,

\begin{equation}\label{eqn:Poincare:ball}
\biggl(\fint_{B(z_0,r)\cap\Omega} |u(x) - \overline{u}_{E}|^{2k} \, dx\biggr)^{\!\frac{1}{2k}} \le C r \biggl(\fint_{B(z_0,Kr)\cap\Omega} |\nabla u|^2\biggr)^{\!\frac{1}{2}},
\end{equation}
where we set $\overline{u}_E := \fint_{E} u$. Here $K$ depends only on the geometric constants of $(\Omega,\sigma)$, and $C$ depends on the geometric constants of $(\Omega,\sigma)$ and on $c$.
\end{thm}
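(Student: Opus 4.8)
The plan is to reduce to the Sobolev–Poincaré inequality that is already available on one-sided NTA domains and then to trade the ball-average for the average over $E$, paying only a constant that depends on $c$. Throughout write $B:=B(z_0,r)\cap\Omega$ and $\overline{u}_B:=\fint_B u$.

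\emph{Step 1: the known inequality with the ball-average.} First I would invoke \cite[Theorem 5.24]{DavFM20} (rescaled to the ball $B(z_0,r)$ and intersected with $\Omega$ in the routine way), which provides constants $C_0\ge 1$ and $K>1$, depending only on the geometric constants of $(\Omega,\sigma)$, such that for every $u\in W^{1,2}(\Omega)$
\begin{equation*}
\left(\fint_{B}\bigl|u-\overline{u}_{B}\bigr|^{2k}\,dx\right)^{\frac{1}{2k}}\le C_0\,r\left(\fint_{B(z_0,Kr)\cap\Omega}|\nabla u|^2\right)^{\frac12}=:\mathcal{R}.
\end{equation*}
If \cite[Theorem 5.24]{DavFM20} is stated only with the Sobolev exponent $2n/(n-2)$ on the left, the case $2k\le 2n/(n-2)$ follows from it by Jensen's inequality on the probability space $(B,\tfrac{dx}{|B|})$, since $L^p$-averages are nondecreasing in $p$. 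The enlargement factor $K$ is exactly the one produced there — it encodes the length of the Harnack chains (H2) used to connect interior points to a corkscrew point — and it is independent of $c$.

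\emph{Step 2: changing the base point of the average.} Then I would change the averaging set. Using $|E|\ge c r^n$ and $|B|\le|B(z_0,r)|=\omega_n r^n$,
\begin{equation*}
\bigl|\overline{u}_{B}-\overline{u}_{E}\bigr|=\left|\fint_{E}\bigl(u-\overline{u}_{B}\bigr)\right|\le\frac{1}{|E|}\int_{B}\bigl|u-\overline{u}_{B}\bigr|\le\frac{|B|}{|E|}\fint_{B}\bigl|u-\overline{u}_{B}\bigr|\le\frac{\omega_n}{c}\,\mathcal{R},
\end{equation*}
where the last inequality bounds the $L^1(B,\tfrac{dx}{|B|})$-average by the $L^{2k}$-average (Jensen again) and then uses Step 1. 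Finally, Minkowski's inequality in $L^{2k}(B,\tfrac{dx}{|B|})$ gives
\begin{equation*}
\left(\fint_{B}\bigl|u-\overline{u}_{E}\bigr|^{2k}\right)^{\frac{1}{2k}}\le\left(\fint_{B}\bigl|u-\overline{u}_{B}\bigr|^{2k}\right)^{\frac{1}{2k}}+\bigl|\overline{u}_{B}-\overline{u}_{E}\bigr|\le\Bigl(1+\tfrac{\omega_n}{c}\Bigr)\mathcal{R},
\end{equation*}
which is the assertion with $C:=\bigl(1+\tfrac{\omega_n}{c}\bigr)C_0$ and the same $K$; note that $C$ depends on the geometric constants and on $c$, while $K$ depends only on the geometric constants, as claimed.

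\emph{Main obstacle.} The substantive content sits entirely inside \cite[Theorem 5.24]{DavFM20}: the Whitney decomposition of $\Omega$, the connection of each Whitney cube to a fixed corkscrew ball through chains of cubes with bounded overlap (built from (H1)–(H2)), the telescoping of cube-averages, and the transfer of the Euclidean Sobolev embedding via a Jones-type $W^{1,2}$-extension. Everything in Steps 1–2 above is soft. If one wanted a self-contained proof, reproducing this chaining scheme would be the only real work; given the results quoted in the excerpt, however, it is cleaner to use it as a black box.
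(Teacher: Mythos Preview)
Your proposal is correct and matches the paper's treatment: the paper does not give a proof either, merely citing \cite[Theorem 5.24]{DavFM20} and \cite[Lemma 2.2]{DavDEMM}, and later remarking that the ball version is an easy corollary of the tent-set version (Theorem~\ref{thm:tent:Poinc}) via the containments \eqref{eqn:cont:cube:tent}. The only cosmetic difference is that the tent-set Poincar\'e already carries the general averaging set $E$, so one can bypass your Step~2 by passing through a dyadic cube $Q$ with $\ell(Q)\approx r$ and using \eqref{eqn:cont:cube:tent} directly; your two-step route via the ball-average and a Jensen/Minkowski swap is an equally valid way to reach the same conclusion.
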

We now turn to our definition of solution.

\begin{defn}

Let $A:\Omega\to \R^{n\times n}$ be a measurable function with values in the space of $n\times n$ matrices with real entries. We say that the operator $L=-\Div(A\nabla)$ is uniformly elliptic if there exists $\lambda>0$ such that

\begin{align}\label{ellipticity}
\lambda|v|^2&\le A(x)v\cdot v,\qquad &&\forall\,x\in\Omega,\,\forall\,v\in\R^n,\\\label{ellipticity1}
A(x)v\cdot w&\le \lambda^{-1}|v||w|,  \qquad&&\forall\,x\in\Omega,\,\forall\,v,w\in\R^n.
\end{align} 
We call $\lambda$ the ellipticity constant of $A$.

\end{defn}

We say that a function $u\in W$ is a solution to \eqref{eqn:Robin} if 

\begin{equation}\label{eqn:sol:exp}
\int_\Omega A\nabla u\cdot\nabla \varphi+\int_{\partial\Omega}a\,\Tr(u)\Tr(\varphi)\,d\sigma=\int_{\partial\Omega}a f\,\Tr(\varphi)\,d\sigma \qquad \textup{for all } \varphi\in W^{1,2}(\Omega),
\end{equation}
whenever the integrals above make sense. If we want to allow parameters $a$ in some $L^q$ space, we need better integrability properties for traces of Sobolev functions. To this end, we introduce tent spaces, following \cite[Chapters 4 and 13]{DavFM20}. 

We consider the Whitney decomposition of $\Omega$ defined by 

\begin{align*}
\mathcal{W}:=&\{I\subset\Omega: I\text{ is a dyadic cube satisfying }4\,\diam(I)\le \dist(4I,\partial\Omega)\\
&\text{and }4\,\diam(I')>\dist(4I',\partial\Omega)\},
\end{align*}
where $I'$ denotes the unique dyadic cube in $\R^n$ such that $I\subset I'$ and $\ell(I')=2\ell(I)$, $\ell(I)$ being the side length of $I$. Whenever $\Lambda>0$ and $I$ is a cube, we denote by $\Lambda I$ the cube having the same center as $I$ and side length $\ell(\Lambda I)=\Lambda\ell(I)$.  Then $\mathcal{W}$ is a nonoverlapping covering of $\Omega$, satisfying

\[
4\,\diam(I)\le\dist(I,\partial\Omega)\le 12\,\diam(I), \qquad I\in \mathcal{W}.
\]

Since $(\partial\Omega,\sigma)$ is a space of homogeneous type, we also have a decomposition of the boundary (see \cite[Theorem 11]{Chr90}).

\begin{prp}

Let $\ell_0\in\Z$ be such that $2^{-\ell_0-1}\le 4\,\diam(\Omega)\le 2^{-\ell_0}$. There exists a collection $\{Q^\ell_j:\,\ell\ge \ell_0,\,j\in\mathcal{J}_\ell\}$ of measurable subsets of $\partial\Omega$ (we call them dyadic cubes, in analogy with the Euclidean case), and there exists a constant $a_0$ (depending only on the geometric constants of $(\Omega,\sigma)$) such that

\begin{enumerate}
\item[(i)] $\partial\Omega=\bigcup\limits_{j\in\mathcal{J}_\ell}Q^\ell_j$ for each $\ell\ge \ell_0$, and the set $\mathcal{J}_{\ell_0}$ contains only one element, corresponding to the cube $Q^{\ell_0}=\partial\Omega$; \\
\item[(ii)] if $\ell\ge \ell'$, then either $Q^\ell_i\subset Q^{\ell'}_j$ or $Q^\ell_i\cap Q^{\ell'}_j=\emptyset$;\\
\item[(iii)] for each pair $(\ell',j)$ and each $\ell<\ell'$ there exists a unique $i$ such that $Q^{\ell'}_j\subset Q^\ell_i$;\\
\item[(iv)] $\diam(Q^\ell_j)\le 2^{-\ell}$;\\
\item[(v)] $Q^\ell_j$ contains some surface ball $\Delta(z^\ell_j, a_0 2^{-\ell})$.
\end{enumerate}

\end{prp}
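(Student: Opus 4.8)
My plan is simply to invoke \cite[Theorem 11]{Chr90}: by (H3) the measure $\sigma$ is doubling on the metric space $\partial\Omega$, so $(\partial\Omega,\sigma)$ is a space of homogeneous type of finite diameter (since $\Omega$ is bounded), which is exactly the setting of that theorem; all five properties, with constants $\eta,a_0,C$ depending only on the doubling constant and hence on the geometric constants of $(\Omega,\sigma)$, follow at once. For the reader's convenience I would also recall the skeleton of Christ's construction.

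First, fix a small ratio $\rho\in(0,1)$ between consecutive scales (the ``$2^{-\ell}$'' in (iv)--(v) is shorthand for ``scale $\ell$'', and the genuine constants $a_0,C$ absorb fixed powers of $\rho$) and, for each $\ell\ge\ell_0$, choose a maximal $\rho^\ell$-separated set of centers $\mathcal{Z}_\ell=\{z^\ell_j\}_{j\in\mathcal{J}_\ell}\subset\partial\Omega$, built inductively so that $\mathcal{Z}_\ell\subset\mathcal{Z}_{\ell+1}$; maximality makes each $\mathcal{Z}_\ell$ also $\rho^\ell$-dense, and, after aligning the scale index with the normalization in the statement, $\mathcal{Z}_{\ell_0}$ is a single point, taken as the center of $Q^{\ell_0}=\partial\Omega$. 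Second, equip $\bigcup_\ell\mathcal{Z}_\ell$ with an ancestor tree: assign to each $z\in\mathcal{Z}_{\ell+1}\setminus\mathcal{Z}_\ell$ a parent in $\mathcal{Z}_\ell$ (a nearest center, ties broken once and for all by a fixed well-ordering of this countable set), and let $z$ be its own parent when $z\in\mathcal{Z}_\ell$; iterating gives maps $\pi^\ell:\mathcal{Z}_{\ell'}\to\mathcal{Z}_\ell$ and a descendant relation $z\preceq z^\ell_j$. Third, define the cubes through this tree rather than as bare Voronoi cells, so that the level-$(\ell+1)$ family refines the level-$\ell$ one: concretely one assigns $x\in\partial\Omega$ to the level-$\ell$ center it reaches through the ancestors of its nearest centers at all fine scales, and makes the resulting sets disjoint by a $\sigma$-measurable selection; this is where one also arranges that $Q^\ell_j$ engulfs a fixed ball $\Delta(z^\ell_j,a_0\rho^\ell)$ around its own center. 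Each $Q^\ell_j$ is then $\sigma$-measurable as a countable Boolean combination of Voronoi cells.

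The combinatorial properties (i)--(iii) — partition at each level, refinement across levels, unique ancestor at each coarser scale — fall straight out of this construction and the tree structure. Property (iv) follows from a geometric series: any descendant $z\preceq z^\ell_j$ has $d(z,z^\ell_j)\le\sum_{m\ge\ell}\rho^m\le C\rho^\ell$, so $Q^\ell_j\subset B(z^\ell_j,C\rho^\ell)$; property (v) is built in as just described. The one genuinely delicate point, and the whole substance of Christ's lemma, is producing simultaneously the \emph{exact} nesting in (ii) — ``$Q^\ell_i\subset Q^{\ell'}_j$ or $Q^\ell_i\cap Q^{\ell'}_j=\emptyset$'', with no exceptional null set — and the fatness (v): a naive scale-by-scale Voronoi partition nests only approximately and lets a center sit arbitrarily close to the boundary of its parent cube. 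Freezing the ancestor tree before defining the cubes, taking $\rho$ small, and assembling each cube along that tree is what removes both obstructions; everything else is routine doubling and geometric-series bookkeeping.
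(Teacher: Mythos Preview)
Your proposal is correct and matches the paper's approach exactly: the paper does not prove this proposition but simply cites \cite[Theorem 11]{Chr90}, noting that $(\partial\Omega,\sigma)$ is a space of homogeneous type. Your additional sketch of Christ's construction is accurate and a reasonable elaboration, but the essential content---invoke Christ's theorem via the doubling property from (H3)---is identical.
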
We denote by $\mathcal{D}_\ell$ the collection of cubes of generation $\ell$, that is,

\[
\mathcal{D}_\ell := \{Q_j^\ell : j \in \mathcal{J}_\ell\},
\]
and by $\mathcal{D}$ the entire collection of cubes, namely

\[
\mathcal{D} := \bigcup_{\ell \ge \ell_0} \mathcal{D}_\ell.
\]
We define the side length of a dyadic cube $Q \in \mathcal{D}_\ell$ by

\[
\ell(Q) = 2^{-\ell},
\]
and, for $\lambda \ge 1$, we denote by $\lambda Q$ the set

\[
\lambda Q := \{z \in \partial \Omega : \dist(z,Q) \le (\lambda - 1)\, \ell(Q)\}.
\]

Using Harnack chains and corkscrew points, one can construct a correspondence between the two dyadic decompositions defined above. In particular, as explained in \cite[Section 6]{DavFM20}, we can associate to each cube $Q \in \mathcal{D}$ a Whitney region $U_Q^* \subset \Omega$ satisfying

\[
\ell(Q) \approx \diam(U_Q^*) \approx \dist(U_Q^*, Q),
\]
where the implicit constants depend only on the geometric constants of $(\Omega, \sigma)$.

For each $x \in \partial \Omega$, we define the non-tangential cone over $x$ by

\[
\gamma^*(x) := \bigcup_{\substack{Q \in \mathcal{D} \\ x \in Q}} U_Q^*,
\]
its truncated cone by

\begin{equation}\label{eqn:trunc:cone}
\gamma^*_Q(x) := \bigcup_{\substack{Q' \in \mathcal{D},\, x \in Q' \\ \ell(Q') \le \ell(Q)}} U_{Q'}^*,
\end{equation}
and the associated tent sets by

\begin{equation}\label{eqn:tent:sets}
T_Q := \bigcup_{x \in Q} \gamma^*_Q(x), \qquad T_{2Q} := \bigcup_{x \in 2Q} \gamma^*_Q(x).
\end{equation}

These sets enjoy several useful properties. There exist constants $c, C, K > 0$ (depending only on the geometric constants of $(\Omega, \sigma)$) such that the following hold. For all $x \in \partial \Omega$ and all $X \in \gamma^*(x)$,

\begin{equation}\label{eqn:cone:dist}
\delta(X) \ge c\, |X - x|,
\end{equation}
where we recall that $\delta(X) = \dist(X, \partial \Omega)$. Moreover, for every $Q \in \mathcal{D}$,

\begin{equation}\label{eqn:tent:meas}
C^{-1} \, \ell(Q)^n \le |T_Q| \le |T_{2Q}| \le C \, \ell(Q)^n,
\end{equation}
and
\begin{equation}\label{eqn:cubes:meas}
C^{-1} \, \ell(Q)^d \le \sigma(Q) \le \sigma(2Q) \le C \, \ell(Q)^d.
\end{equation}
Finally, for any $Q \in \mathcal{D}$, any $z \in Q$, and any $\tfrac{1}{2}\ell(Q) \le r < \ell(Q)$, we have

\begin{equation}\label{eqn:cont:cube:tent}
\Delta(z,r) \subset 2Q \subset \Delta(z,Kr), \qquad B(z,K^{-1}r) \cap \Omega \subset T_{2Q} \subset B(z,Kr) \cap \Omega.
\end{equation}
The tent set $T_{2Q}$ corresponding to the cube $Q^{\ell_0} = \partial \Omega$ satisfies

\[
T_{2Q^{\ell_0}} = \Omega.
\]

The tent sets support a good Poincar\'e inequality.

\begin{thm}\cite[Theorem 5.24]{DavFM20}\label{thm:tent:Poinc}

Let $Q\in\mathcal{D}$, $c>0$, $k\le \frac{n}{n-2}$, and $E\subset T_{2Q}$ with $|E|\ge c r^n$. Then there exists a constant $C$ such that, for any $u\in W^{1,2}(\Omega)$,

\begin{equation}\label{eqn:Poincare:tent}
\biggl(\fint_{T_{2Q}}|u(x)-\overline{u}_{E}|^{2k}\,dx\biggr)^{\frac{1}{2k}}\le C\,r \biggl(\fint_{T_{2Q}\cap\Omega}|\nabla u|^2\biggr)^{\frac{1}{2}},
\end{equation}
where we set $\overline{u}_E:=\fint_{E}u$. Here $C$ depends on the geometric constants of $(\Omega,\sigma)$ and on $c$.

\end{thm}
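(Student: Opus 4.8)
The plan is to view the tent set $T_{2Q}$ as a bounded John domain at its own scale $r\approx\ell(Q)$, with John constant depending only on the geometric constants of $(\Omega,\sigma)$, to apply the classical Sobolev--Poincaré inequality for John domains, and then to perform the routine passages from the Sobolev exponent $2^{*}:=\tfrac{2n}{n-2}$ down to $2k\le 2^{*}$ and from the average $\overline{u}_{T_{2Q}}$ to the average $\overline{u}_{E}$. Note that $T_{2Q}\subset\Omega$, so $T_{2Q}\cap\Omega=T_{2Q}$, and that the only meaningful regime is $r\approx\ell(Q)$: indeed $cr^{n}\le|E|\le|T_{2Q}|\le C\ell(Q)^{n}$ by \eqref{eqn:tent:meas} already forces $r\lesssim\ell(Q)$, while if $r\ll\ell(Q)$ the inequality is easily seen to fail for affine $u$; so we assume $r\approx\ell(Q)$ throughout, and in particular $|T_{2Q}|\approx r^{n}$.

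\emph{Geometry of the tent.} Fix $z\in Q$ and a corkscrew point $A:=A_{r}(z)$. By \eqref{eqn:cont:cube:tent} and (H1) we get $T_{2Q}\supset B(z,K^{-1}r)\cap\Omega\supset B(A,cr)$, and by \eqref{eqn:cont:cube:tent} again $\diam(T_{2Q})\le 2Kr$. The crucial fact is that $T_{2Q}$ is a John (equivalently, Boman chain) domain: using the defining relations $\ell(Q')\approx\diam(U^{*}_{Q'})\approx\dist(U^{*}_{Q'},Q')$ of the Whitney regions together with the Harnack chain hypothesis (H2), one shows that every $X\in T_{2Q}$ can be joined to $A$ by a chain of balls $B_{0}=B(A,cr),B_{1},\dots,B_{N}\ni X$, of bounded overlap, with $B_{i}\subset T_{2Q}$, radii $\rho_{i}\approx\dist(B_{i},\partial T_{2Q})$, consecutive balls overlapping, and $N\lesssim 1+\log\bigl(r/\delta(X)\bigr)$. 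The one point requiring care is that these Harnack chains must be kept inside $T_{2Q}$, not merely inside $\Omega$; this is arranged because $T_{2Q}$ collects all the Whitney regions $U^{*}_{Q'}$ with $Q'\subset 2Q$ and $\ell(Q')\le\ell(Q)$, and it is precisely the content of the Whitney--tent construction of \cite{DavFM20}.

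\emph{Poincaré on the John domain and final adjustments.} On each ball $B_{i}$ the Euclidean Sobolev--Poincaré inequality gives $\bigl(\fint_{B_{i}}|u-\overline{u}_{B_{i}}|^{2^{*}}\bigr)^{1/2^{*}}\le C\rho_{i}\bigl(\fint_{B_{i}}|\nabla u|^{2}\bigr)^{1/2}$; telescoping the averages $\overline{u}_{B_{i}}$ along the chain of the previous paragraph back to $\overline{u}_{B_{0}}$ and summing the gradient integrals against the weights furnished by the chain condition (Bojarski's lemma, i.e.\ the standard argument for John domains) yields
\[
\Bigl(\fint_{T_{2Q}}|u-\overline{u}_{T_{2Q}}|^{2^{*}}\,dx\Bigr)^{1/2^{*}}\le C\,\diam(T_{2Q})\Bigl(\fint_{T_{2Q}}|\nabla u|^{2}\,dx\Bigr)^{1/2}\le C'r\Bigl(\fint_{T_{2Q}}|\nabla u|^{2}\,dx\Bigr)^{1/2},
\]
with $C,C'$ depending only on the geometric constants. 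Since $2k\le 2^{*}$ and $|T_{2Q}|<\infty$, Jensen's inequality on the probability space $(T_{2Q},|T_{2Q}|^{-1}dx)$ lowers the exponent from $2^{*}$ to $2k$; and since $|E|\ge cr^{n}\ge c_{0}|T_{2Q}|$ for some $c_{0}=c_{0}(c)>0$,
\[
|\overline{u}_{T_{2Q}}-\overline{u}_{E}|\le\fint_{E}|u-\overline{u}_{T_{2Q}}|\le\frac{|T_{2Q}|}{|E|}\Bigl(\fint_{T_{2Q}}|u-\overline{u}_{T_{2Q}}|^{2k}\Bigr)^{1/2k}\le c_{0}^{-1}\Bigl(\fint_{T_{2Q}}|u-\overline{u}_{T_{2Q}}|^{2k}\Bigr)^{1/2k},
\]
so the triangle inequality in $L^{2k}$ of the normalized space gives \eqref{eqn:Poincare:tent}.

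\emph{Main obstacle.} The only nonroutine ingredient is the first part of the geometric step: proving, with constants depending only on $M,d,n$, that $T_{2Q}$ satisfies a John/chain condition at scale $\ell(Q)$ — in particular that the Harnack chains joining interior points to the corkscrew point stay inside $T_{2Q}$ and have the logarithmic length bound. Everything downstream is classical; this is why in the paper we simply invoke \cite[Theorem 5.24]{DavFM20}, the present sketch recording how it would be established from scratch.
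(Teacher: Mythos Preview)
The paper does not give its own proof of this statement; it is simply quoted from \cite[Theorem~5.24]{DavFM20}. Your sketch is a correct outline: the key geometric input is that $T_{2Q}$ is a John (equivalently, Boman chain) domain at scale $\ell(Q)$ with constants depending only on the geometric constants of $(\Omega,\sigma)$ --- this is indeed part of the tent construction in \cite{DavFM20} --- and from there the Sobolev--Poincar\'e inequality on John domains, the lowering of the exponent via Jensen, and the passage from $\overline{u}_{T_{2Q}}$ to $\overline{u}_E$ are routine and correctly executed. Your identification of the John/chain property of $T_{2Q}$ as the only nontrivial ingredient, and your acknowledgment that this is precisely what the cited reference supplies, are both accurate.
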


\begin{rmk}

Theorem~\ref{thm:Poinc:int} is an easy corollary of Theorem~\ref{thm:tent:Poinc} and of \eqref{eqn:cont:cube:tent}. Indeed, if $z_0\in \partial\Omega$ and $0<r\le \diam(\Omega)$, choose $Q_0\in \mathcal{D}$ such that $z_0\in Q_0$ and $\ell(Q_0)\approx Kr$. By \eqref{eqn:tent:meas}, we then have $|T_{2Q_0}|\approx |B(z_0,r)\cap\Omega|$. If $r\le K^{-1}\diam(\Omega)$, it follows from \eqref{eqn:cont:cube:tent} that

\[
B(z_0,r)\cap\Omega\subset T_{2Q_0}\subset B(z_0,K^2 r),
\]
so that \eqref{eqn:Poincare:ball} follows upon applying \eqref{eqn:Poincare:tent} with $T_{2Q}=T_{2Q_0}$. If instead $r\ge K^{-1}\diam(\Omega)$, then \eqref{eqn:Poincare:ball} follows by applying \eqref{eqn:Poincare:tent} with $T_{2Q}=T_{2Q^{\ell_0}}=\Omega$.

\end{rmk}

We now prove a version of the Poincar\'e inequality at the boundary in which, in place of the integral over $T_{2Q}$ on the left-hand side of \eqref{eqn:Poincare:tent}, we have an integral over the boundary dyadic cube $2Q$.

\begin{lem}[Boundary Poincar\'e inequality on dyadic cubes]\label{lem:Poincare:boundary:cubes}

Let $Q \in \mathcal{D}$ be a dyadic cube and let $T_{2Q}$ be the tent set over $2Q$. Let $u \in W^{1,2}(\Omega)$ and $k < \frac{d}{n-2}$. Then there exists a constant $C$ such that

\begin{equation}\label{eqn:Poinc}
\biggl(\fint_{2Q} \bigl|\Tr u(z) - \overline{u}_{T_{2Q}}\bigr|^{2k} \, d\sigma(z)\biggr)^{\frac{1}{2k}}
\le C \, \ell(Q) \biggl(\fint_{T_{2Q}} |\nabla u|^2\biggr)^{\frac{1}{2}},
\end{equation}
where we set $\overline{u}_{T_{2Q}} := \fint_{T_{2Q}} u$. Here $C$ depends on the geometric constants of $(\Omega, \sigma)$ and on $k$.

\end{lem}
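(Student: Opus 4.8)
The plan is to identify $\Tr u(z)$, for $\sigma$-almost every $z\in 2Q$, with the limit of the averages of $u$ over the Whitney regions $U^*_{Q'}$ lying over $z$, to telescope this limit along the dyadic tower of cubes containing $z$, to estimate each increment by a Poincar\'e inequality on a single region of Whitney size, and finally to sum the resulting bounds in $L^{2k}(2Q,\sigma)$; the restriction $k<\frac d{n-2}$ will turn out to be exactly the condition making the ensuing geometric series converge. Since $C^\infty_c(\R^n)$ is dense in $W^{1,2}(\Omega)$ and $\Tr$ is bounded into $L^2(\sigma)$, it is enough to prove \eqref{eqn:Poinc} for $u\in C^\infty_c(\R^n)$, for which $\Tr u=u|_{\partial\Omega}$: for general $u$, applying the smooth inequality to the differences $u_m-u_{m'}$ of an approximating sequence $u_m\to u$ in $W^{1,2}(\Omega)$ shows $\Tr u_m-\overline{(u_m)}_{T_{2Q}}$ is Cauchy in $L^{2k}(2Q,\sigma)$, and since $\Tr u_m\to\Tr u$ in $L^2(\sigma)$ and $\overline{(u_m)}_{T_{2Q}}\to\overline{u}_{T_{2Q}}$ its limit is $\Tr u-\overline{u}_{T_{2Q}}$, so one passes to the limit. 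We may also assume $1\le k<\frac d{n-2}$ (a nonempty range since $d>n-2$), the case of smaller $k$ following from a larger one by Jensen's inequality applied to the average $\fint_{2Q}$ on the left of \eqref{eqn:Poinc}.

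Fix $u\in C^\infty_c(\R^n)$ and $z\in 2Q$, and let $Q_j(z)$, $j\ge 0$, be the dyadic cube with $z\in Q_j(z)$ and $\ell(Q_j(z))=2^{-j}\ell(Q)$; these are nested and decreasing, $Q_j(z)$ being the dyadic child of $Q_{j-1}(z)$. Each $U^*_{Q_j(z)}$ is contained in the cone $\gamma^*_Q(z)$, hence in $T_{2Q}$, has $|U^*_{Q_j(z)}|\approx(2^{-j}\ell(Q))^n$, and, since $\diam U^*_{Q_j(z)}+\dist(z,U^*_{Q_j(z)})\to 0$, satisfies $\overline{u}_{U^*_{Q_j(z)}}\to u(z)$. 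Setting $U^*_{Q_{-1}(z)}:=T_{2Q}$ and telescoping,
\[
\Tr u(z)-\overline{u}_{T_{2Q}}=\sum_{j\ge 0}\bigl(\overline{u}_{U^*_{Q_j(z)}}-\overline{u}_{U^*_{Q_{j-1}(z)}}\bigr).
\]
For $j\ge 1$, the parent and child regions $U^*_{Q_{j-1}(z)}$ and $U^*_{Q_j(z)}$ are adjacent, so $\Theta_j(z):=U^*_{Q_{j-1}(z)}\cup U^*_{Q_j(z)}$ is a connected, bounded union of Whitney cubes of comparable size, with $|\Theta_j(z)|\approx(2^{-j}\ell(Q))^n$ and $\Theta_j(z)\subset\gamma^*_Q(z)\subset T_{2Q}$; the Poincar\'e inequality on $\Theta_j(z)$ (with uniform constant) together with $|U^*_{Q_{j-1}(z)}|,|U^*_{Q_j(z)}|\gtrsim|\Theta_j(z)|$ yields
\[
\bigl|\overline{u}_{U^*_{Q_j(z)}}-\overline{u}_{U^*_{Q_{j-1}(z)}}\bigr|\le C\fint_{\Theta_j(z)}\bigl|u-\overline{u}_{\Theta_j(z)}\bigr|\le C\,2^{-j}\ell(Q)\Bigl(\fint_{\Theta_j(z)}|\nabla u|^2\Bigr)^{1/2}=C(2^{-j}\ell(Q))^{1-\frac n2}\|\nabla u\|_{L^2(\Theta_j(z))}.
\]
For $j=0$, Theorem~\ref{thm:tent:Poinc} on $T_{2Q}$ with $E=U^*_{Q_0(z)}$ (legitimate since $|U^*_{Q_0(z)}|\gtrsim\ell(Q)^n\approx|T_{2Q}|$) gives $\bigl|\overline{u}_{U^*_{Q_0(z)}}-\overline{u}_{T_{2Q}}\bigr|\le C\ell(Q)\bigl(\fint_{T_{2Q}}|\nabla u|^2\bigr)^{1/2}$, uniformly in $z$.

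It remains to take $L^{2k}(2Q,\sigma)$ norms. The $j=0$ bound is already of the desired shape. For $j\ge 1$, by Minkowski's inequality in $L^{2k}(2Q,\sigma)$ it suffices to control $(2^{-j}\ell(Q))^{1-\frac n2}\bigl(\int_{2Q}\|\nabla u\|_{L^2(\Theta_j(z))}^{2k}\,d\sigma(z)\bigr)^{1/2k}$ for each $j$. Grouping $z\in 2Q$ according to $R:=Q_j(z)$ — which determines $\Theta_j(z)=U^*_{\widehat{R}}\cup U^*_R$ ($\widehat{R}$ the dyadic parent of $R$) and ranges over the collection $\mathcal{D}_j(Q)$ of dyadic cubes of side $2^{-j}\ell(Q)$ that meet $2Q$, each with $\sigma(R)\approx(2^{-j}\ell(Q))^d$ — and using that $\{U^*_{\widehat{R}}\cup U^*_R\}_{R\in\mathcal{D}_j(Q)}$ has bounded overlap and lies in $T_{2Q}$, together with $\sum_R b_R^k\le(\sum_R b_R)^k$ for nonnegative $b_R$ and $k\ge 1$, we obtain
\[
\int_{2Q}\|\nabla u\|_{L^2(\Theta_j(z))}^{2k}\,d\sigma(z)\le\sum_{R\in\mathcal{D}_j(Q)}\sigma(R)\Bigl(\int_{U^*_{\widehat{R}}\cup U^*_R}|\nabla u|^2\Bigr)^{k}\le C(2^{-j}\ell(Q))^{d}\Bigl(\int_{T_{2Q}}|\nabla u|^2\Bigr)^{k}.
\]
Plugging this in, dividing by $\sigma(2Q)^{1/2k}\approx\ell(Q)^{d/2k}$ to reach the averaged form of \eqref{eqn:Poinc}, and using $|T_{2Q}|\approx\ell(Q)^n$, the $j$-th term is bounded by $C\,2^{-j(1-\frac n2+\frac d{2k})}\ell(Q)\bigl(\fint_{T_{2Q}}|\nabla u|^2\bigr)^{1/2}$. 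The exponent $1-\frac n2+\frac d{2k}$ is strictly positive precisely when $k<\frac d{n-2}$, so the series $\sum_{j\ge 1}2^{-j(1-\frac n2+\frac d{2k})}$ converges to a constant depending only on the geometric data and on $k$, and \eqref{eqn:Poinc} follows.

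The core of the argument — and the one place the fine structure of the paired Whitney and boundary dyadic decompositions is used — is the geometric bookkeeping in the last two displays: that for $z\in 2Q$ the comparison regions $\Theta_j(z)$ may be chosen inside $T_{2Q}$ (so that the right-hand side of \eqref{eqn:Poinc} involves only $T_{2Q}$ and not a larger dilate), that they have bounded overlap at each fixed scale $2^{-j}\ell(Q)$, and that the union of a parent's Whitney region with a child's supports a Poincar\'e inequality with a constant independent of the cube. All of these follow from the construction recalled in Section~2, in particular from the adjacency of the Whitney regions of parent and child cubes and from \eqref{eqn:cone:dist}--\eqref{eqn:cont:cube:tent}. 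I expect this, rather than the summation (which is forced once the exponents are in place), to be the step that needs the most care.
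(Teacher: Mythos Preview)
Your argument is correct and shares the paper's overall skeleton---telescope $\Tr u(z)-\overline{u}_{T_{2Q}}$ along a chain in $\gamma^*_Q(z)$, apply a Poincar\'e inequality at each step, then sum---but the way you pass to the $L^{2k}(2Q,\sigma)$ norm is genuinely different. The paper telescopes along a chain of overlapping \emph{balls} $B^z_i$ (with $B^z_0$ independent of $z$), then introduces an auxiliary exponent $\alpha\in(0,1)$ and applies Cauchy--Schwarz to reach
\[
\Bigl|\Tr u(z)-\fint_{B_0}u\Bigr|\lesssim \ell(Q)^{1-\alpha}\Bigl(\int_{\gamma^*_Q(z)}|\nabla u|^2\,\delta^{2\alpha-n}\Bigr)^{1/2},
\]
and finally swaps the $z$- and $Z$-integrations via Minkowski's integral inequality together with the Carleson-type bound $\sigma\{z:\,Z\in\gamma^*_Q(z)\}\lesssim\delta(Z)^d$; the condition $k<d/(n-2)$ emerges as $2\alpha-n+d/k>0$ for $\alpha$ close to $1$. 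You instead work discretely scale by scale: at generation $j$ you group the $z$'s by $R=Q_j(z)$, invoke the bounded overlap of the regions $U^*_{\widehat R}\cup U^*_R$ inside $T_{2Q}$ and the embedding $\ell^1\hookrightarrow\ell^k$ (legitimate after your reduction to $k\ge 1$), and recover directly the geometric ratio $2^{-j(1-n/2+d/(2k))}$. Your route avoids the extra parameter $\alpha$ and the Minkowski swap, trading them for the claim that $U^*_{\widehat R}\cup U^*_R$ is a bounded chain of Whitney cubes of comparable size supporting a uniform Poincar\'e inequality; this is indeed part of the construction in \cite{DavFM20}, as you correctly flag. The paper's chain of balls sidesteps that issue because consecutive balls are built to overlap substantially from the outset.
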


\begin{proof}

The proof is strongly inspired by the proof of \cite[Theorem 7.1]{DavFM20}. As explained at the beginning of the proof of that theorem, specifically on \cite[page 46]{DavFM20}, for any $z \in 2Q$ we can find a sequence of balls $\{B_i^z\}_{i \in \N}$ with the following properties.

There is a constant $N$, depending only on the geometric constants of $(\Omega, \sigma)$, such that for all $x \in B_i^z$,

\[
\delta(x) \approx \diam(B_i^z \cap B_{i+1}^z) \approx \diam(B_i^z) \approx 2^{-i/N} \ell(Q),
\]
where the implicit constants depend only on the geometric constants of $(\Omega, \sigma)$. The balls in the sequence $\{B_i^z\}_{i \in \N}$ have bounded overlap, and $B_0^z$ is independent of $z$. Moreover, each $B_i^z$ is contained in the truncated cone $\gamma_Q^\star(z) \subset T_{2Q}$. By \eqref{eqn:Poincare:tent}, it is enough to prove \eqref{eqn:Poinc} with $\fint_{B_0} u$ in place of $\overline{u}_{T_{2Q}}$. By the definition of the trace operator (see \cite[Section 6]{DavFM20}), for each $z$ we can write

\begin{align*}
\biggl|\Tr u(z) - \fint_{B_0} u\biggr|
&\le \sum_{i \ge 0} \biggl|\fint_{B_i^z} u - \fint_{B_{i+1}^z} u\biggr| \\
&\le \sum_{i \ge 0} \fint_{B_i^z} \biggl|u - \fint_{B_i^z \cap B_{i+1}^z} u\biggr| 
+ \fint_{B_{i+1}^z} \biggl|u - \fint_{B_i^z \cap B_{i+1}^z} u\biggr| \\
&\lesssim \sum_{i \ge 0} \fint_{B_i^z} \biggl|u - \fint_{B_i^z \cap B_{i+1}^z} u\biggr| \\
&\lesssim \sum_{i \ge 0} 2^{-i/N} \ell(Q) \fint_{B_i^z} |\nabla u|.
\end{align*}

We now introduce a parameter $\alpha \in (0,1)$ that will allow us to apply H\"older's inequality. We obtain

\begin{align*}
\biggl|\Tr u(z) - \fint_{B_0} u\biggr|
&\lesssim \ell(Q)^{1-\alpha}
\biggl( \sum_{i \ge 0} 2^{-2\alpha i/N} \ell(Q)^{2\alpha} \biggl(\fint_{B_i^z} |\nabla u|\biggr)^2 \biggr)^{1/2} \\
&\lesssim \ell(Q)^{1-\alpha}
\biggl( \sum_{i \ge 0} \int_{B_i^z} |\nabla u|^2 \, \delta^{2\alpha - n} \biggr)^{1/2} \\
&\lesssim \ell(Q)^{1-\alpha}
\biggl( \int_{\gamma_Q^\star(z)} |\nabla u|^2 \, \delta^{2\alpha - n} \biggr)^{1/2},
\end{align*}
where the implicit constants also depend on $\alpha$.

We now integrate over $z$ and use the fact that (see \eqref{eqn:cone:dist}) there exists a constant $C$ such that for any $Z \in \gamma_Q^\star(z)$ we have $z \in B(Z, C \delta(Z))$. Then

\begin{multline*}
\biggl(\fint_{2Q} \biggl|\Tr u(z) - \fint_{B_0} u\biggr|^{2k} \, d\sigma(z)\biggr)^{1/(2k)} \\
\lesssim \ell(Q)^{1-\alpha}
\biggl(\fint_{2Q} \biggl(\int_{T_{2Q}} \1_{B(Z, C \delta(Z))}(z) |\nabla u|^2(Z) \, \delta(Z)^{2\alpha - n} \, dZ\biggr)^k d\sigma(z)\biggr)^{1/(2k)} \\
\le \ell(Q)^{1-\alpha}
\biggl(\int_{T_{2Q}} \sigma\bigl(B(Z, C \delta(Z))\bigr)^{1/k} \, \sigma(2Q)^{-1/k}
|\nabla u|^2(Z) \, \delta(Z)^{2\alpha - n} \, dZ\biggr)^{1/2} \\
\lesssim \ell(Q)^{1 - \alpha + \frac{n}{2} - \frac{d}{2k}}
\biggl(\fint_{T_{2Q}} |\nabla u|^2(Z) \, \delta(Z)^{2\alpha - n + \frac{d}{k}} \, dZ\biggr)^{1/2},
\end{multline*}
where we used Minkowski's integral inequality to exchange the order of integration, as well as the facts that $|T_{2Q}| \approx \ell(Q)^n$ and $\sigma(2Q) \approx \ell(Q)^d$.

Since $\delta(Z) \lesssim \ell(Q)$ for any $Z \in T_{2Q}$, it is sufficient to have $2\alpha - n + \frac{d}{k} > 0$ in order to conclude. By the hypothesis on $k$, we have $2 - n + \frac{d}{k} > 0$, and we just need to choose $\alpha$ close enough to $1$.

\end{proof}
In light of \eqref{eqn:cont:cube:tent} and \eqref{eqn:Poincare:ball}, Lemma \ref{lem:Poincare:boundary:cubes} immediately yields the following boundary Poincar\'e inequality on balls.

\begin{lem}[Boundary Poincar\'e inequality]\label{lem:Poincare:boundary}

Let $z_0\in \partial\Omega$, $0<r\le\diam(\Omega)$, $k<\frac{d}{n-2}$, and let $E\subset B(z_0,r)\cap \Omega$ satisfy $|E|\ge c\, r^n$. Then there exist constants $C>0$ and $K>1$ such that, for every $u\in W^{1,2}(\Omega)$,

\begin{equation}\label{eqn:Poincare:bdry:ball}
\biggl(\fint_{\Delta(z_0,r)}\bigl|\Tr u(z)-\overline{u}_{E}\bigr|^{2k}\,d\sigma(z)\biggr)^{\!\frac{1}{2k}}\le C\,r \biggl(\fint_{B(z_0,Kr)\cap\Omega}|\nabla u|^2\biggr)^{\!\frac{1}{2}},
\end{equation}
where $\overline{u}_E:=\fint_{E}u$. Here $K$ depends only on the geometric constants of $(\Omega,\sigma)$, while $C$ depends on the geometric constants of $(\Omega,\sigma)$, on $c$, and on $k$.

\end{lem}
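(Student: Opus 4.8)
The plan is to deduce Lemma~\ref{lem:Poincare:boundary} directly from Lemma~\ref{lem:Poincare:boundary:cubes} together with the comparison \eqref{eqn:cont:cube:tent} between surface balls and dyadic cubes, and the interior Poincar\'e inequality \eqref{eqn:Poincare:ball} to handle the choice of averaging set. First I would fix $z_0\in\partial\Omega$ and $0<r\le\textup{diam}(\Omega)$, and pick a dyadic cube $Q\in\mathcal{D}$ containing $z_0$ with $\tfrac12\ell(Q)\le r<\ell(Q)$ (such a $Q$ exists by the nesting property (ii)--(iii) of the dyadic decomposition). By \eqref{eqn:cont:cube:tent} we then have $\Delta(z_0,r)\subset 2Q$ and $T_{2Q}\subset B(z_0,Kr)\cap\Omega$, and also $B(z_0,K^{-1}r)\cap\Omega\subset T_{2Q}$, while $\ell(Q)\approx r$; moreover $\sigma(\Delta(z_0,r))\approx r^d\approx\sigma(2Q)$ by \eqref{eqn:cubes:meas}/(H3) and $|T_{2Q}|\approx r^n$ by \eqref{eqn:tent:meas}.

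The second step is to compare the averages. Lemma~\ref{lem:Poincare:boundary:cubes} gives the estimate with $\overline{u}_{T_{2Q}}$ on the left-hand side, whereas we want $\overline{u}_E$ for an arbitrary $E\subset B(z_0,r)\cap\Omega$ with $|E|\ge cr^n$. I would insert and subtract $\overline{u}_{T_{2Q}}$:
\begin{equation}
\biggl(\fint_{\Delta(z_0,r)}|\Tr u-\overline{u}_E|^{2k}\,d\sigma\biggr)^{\frac{1}{2k}}\le\biggl(\fint_{\Delta(z_0,r)}|\Tr u-\overline{u}_{T_{2Q}}|^{2k}\,d\sigma\biggr)^{\frac{1}{2k}}+|\overline{u}_{T_{2Q}}-\overline{u}_E|.
\end{equation}
The first term is controlled by Lemma~\ref{lem:Poincare:boundary:cubes} (using $\Delta(z_0,r)\subset 2Q$ and $\sigma(\Delta(z_0,r))\approx\sigma(2Q)$ to replace $\fint_{2Q}$ by $\fint_{\Delta(z_0,r)}$ up to a constant), followed by $T_{2Q}\subset B(z_0,Kr)\cap\Omega$ and $|T_{2Q}|\approx|B(z_0,Kr)\cap\Omega|\approx r^n$ to pass to the ball on the right. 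For the second term, since both $T_{2Q}$ and $E$ are subsets of $B(z_0,Kr)\cap\Omega$ of measure $\gtrsim r^n$, a standard telescoping through $\overline{u}_{B(z_0,Kr)\cap\Omega}$ bounds $|\overline{u}_{T_{2Q}}-\overline{u}_E|$ by $\fint_{B(z_0,Kr)\cap\Omega}|u-\overline{u}_{B(z_0,Kr)\cap\Omega}|$ times a constant depending on $c$, and then \eqref{eqn:Poincare:ball} (applied with $2k$ there replaced by any exponent $\ge 1$, in particular $1$, which is legitimate since $1\le\frac{n}{n-2}$) bounds this by $C\,r\bigl(\fint_{B(z_0,K'r)\cap\Omega}|\nabla u|^2\bigr)^{1/2}$. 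Enlarging $K$ to absorb $K'$ and relabeling constants gives \eqref{eqn:Poincare:bdry:ball}.

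I expect no serious obstacle here; the only points requiring care are bookkeeping ones: ensuring the ratios of measures $\sigma(\Delta(z_0,r))/\sigma(2Q)$, $|T_{2Q}|/|B(z_0,Kr)\cap\Omega|$, and $|E|/|T_{2Q}|$ are all bounded above and below by constants depending only on the geometric data (and on $c$), so that every replacement of one averaging domain by another costs only a constant; and tracking that the various dilation constants $K$ appearing in \eqref{eqn:cont:cube:tent} and \eqref{eqn:Poincare:ball} can be consolidated into a single $K$ in the final statement. The hypothesis $k<\frac{d}{n-2}$ is used solely through Lemma~\ref{lem:Poincare:boundary:cubes}; the interior inequality \eqref{eqn:Poincare:ball} only needs $1\le\frac{n}{n-2}$, which holds for $n\ge3$.
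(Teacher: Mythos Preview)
Your proposal is correct and follows exactly the approach indicated in the paper, which states only that the lemma is ``a straightforward consequence of Lemma~\ref{lem:Poincare:boundary:cubes}'' in light of \eqref{eqn:cont:cube:tent} and \eqref{eqn:Poincare:ball}. You have simply spelled out the details of that deduction: passing from the surface ball to the containing dyadic cube via \eqref{eqn:cont:cube:tent}, applying Lemma~\ref{lem:Poincare:boundary:cubes}, and then using \eqref{eqn:Poincare:ball} to swap the averaging set $T_{2Q}$ for $E$.
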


\begin{rmk}

The $L^2$-version of \eqref{eqn:Poincare:bdry:ball} (corresponding to $k=1$) was proved in \cite[Lemma 2.3]{DavDEMM}.

\end{rmk}

\begin{rmk}

Let $q>\frac{d}{d-n+2}$, $B:=B(z_0,r)$, $\Delta:=\Delta(z_0,r)$, and let $K$ be as in Lemma \ref{lem:Poincare:boundary}. Assume that $g\in L^q(\Delta)$. Then, by H\"older’s inequality,

\begin{equation}\label{eqn:a:Poinc}
\fint_{\Delta}\bigl|\Tr u(z)-\overline{u}_B\bigr|^2 g\,d\sigma(z)\le C\,r^{2-\frac{d}{q}} \|g\|_{L^q(\Delta)}\fint_{KB\cap\Omega}|\nabla u|^2.
\end{equation}
\end{rmk}
We will also need the following lemma.

\begin{lem}\label{lem:equivalence}

Let $q>\frac{d}{d-n+2}$, and let $a \in L^q(\partial\Omega)$ be such that $a \ge 0$ and $a \not\equiv 0$. 

Then there exists a constant $C$, depending only on $\diam(\Omega)$, $\|a\|_{L^q(\partial\Omega)}$, $\int_{\partial\Omega} a\,d\sigma$, $q$, and the geometric constants of $(\Omega, \sigma)$, such that 

\begin{equation}\label{equiv:norm:a}
\|u\|^2_{W^{1,2}(\Omega)} \le C\biggl(\|\nabla u\|^2_{L^2(\Omega)} + \int_{\partial\Omega} a \, (\Tr  u)^2\,d\sigma\biggr).
\end{equation}

\end{lem}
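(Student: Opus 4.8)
The plan is to argue by contradiction, using the compactness of the trace operator and the Poincaré-type inequalities established above. By the equivalence of $\|u\|_{W}$ with $\|u\|_{\Tr}$, it suffices to prove that there is a constant $C$ such that
\[
\|\nabla u\|_{L^2(\Omega)}^2 + \|\Tr u\|_{L^2(\sigma)}^2 \le C\left(\|\nabla u\|_{L^2(\Omega)}^2 + \int_{\partial\Omega} a\,\Tr(u)^2\,d\sigma\right),
\]
which in turn reduces to controlling $\|\Tr u\|_{L^2(\sigma)}^2$ by the right-hand side. Suppose no such $C$ exists; then there is a sequence $u_j \in W^{1,2}(\Omega)$ with $\|\Tr u_j\|_{L^2(\sigma)} = 1$ but $\|\nabla u_j\|_{L^2(\Omega)}^2 + \int_{\partial\Omega} a\,\Tr(u_j)^2\,d\sigma \to 0$. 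In particular $\nabla u_j \to 0$ in $L^2(\Omega)$, so $(u_j)$ is bounded in $W^{1,2}(\Omega)$ once we observe (via the Poincaré inequality and the normalization of the trace) that the $u_j$ cannot drift off to infinity; more precisely, applying \eqref{eqn:Poincare:bdry:ball} with $z_0 \in \partial\Omega$, $r = \textup{diam}(\Omega)$, $E$ a fixed corkscrew region (so $\Delta(z_0,r) = \partial\Omega$) controls $\|\Tr u_j - \overline{(u_j)}_E\|_{L^2(\sigma)}$ by $\|\nabla u_j\|_{L^2}$, and together with $\|\Tr u_j\|_{L^2(\sigma)}=1$ this bounds $|\overline{(u_j)}_E|$, hence $\|u_j\|_{L^2(\Omega)}$ via Poincaré again.

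Next I would extract a subsequence (not relabeled) with $u_j \rightharpoonup u$ weakly in $W^{1,2}(\Omega)$. By weak lower semicontinuity, $\|\nabla u\|_{L^2(\Omega)} \le \liminf \|\nabla u_j\|_{L^2(\Omega)} = 0$, so $\nabla u = 0$; since $\Omega$ is connected (it is a $1$-sided NTA domain), $u$ is constant, say $u \equiv c$. By compactness of the trace operator, $\Tr u_j \to \Tr u = c$ strongly in $L^2(\partial\Omega,\sigma)$, so $\|c\|_{L^2(\sigma)} = \lim \|\Tr u_j\|_{L^2(\sigma)} = 1$, forcing $c \neq 0$. On the other hand, Fatou's lemma (or the strong $L^2(\sigma)$ convergence of $\Tr u_j$, up to a further subsequence converging $\sigma$-a.e., together with $a \ge 0$) gives
\[
\int_{\partial\Omega} a\, c^2\, d\sigma = \int_{\partial\Omega} a\,(\Tr u)^2\, d\sigma \le \liminf_{j\to\infty} \int_{\partial\Omega} a\,(\Tr u_j)^2\,d\sigma = 0,
\]
so $c^2 \int_{\partial\Omega} a\,d\sigma = 0$. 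Since $a$ is non identically zero and nonnegative, $\int_{\partial\Omega} a\,d\sigma > 0$, whence $c = 0$, a contradiction.

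The main point to be careful about is the claim that $(u_j)$ is bounded in $W^{1,2}(\Omega)$: a compactness argument of this kind only closes if one rules out the trivial obstruction that adding a large constant leaves $\|\nabla u_j\|_{L^2}$ unchanged while blowing up $\|u_j\|_{L^2(\Omega)}$ — here this is exactly prevented by the normalization $\|\Tr u_j\|_{L^2(\sigma)}=1$ combined with the boundary Poincaré inequality \eqref{eqn:Poincare:bdry:ball} (or the $L^2$ version from \cite[Lemma~2.3]{DavDEMM}), which pins down the average of $u_j$. The explicit dependence of $C$ on $\|a\|_{L^q}$ and $\fint_{\partial\Omega} a\,d\sigma$ follows because these are the only quantities beyond the geometry of $(\Omega,\sigma)$ that enter: the first enters through the Hölder step needed to make sense of $\int a\,(\Tr u)^2\,d\sigma$ for $u \in W^{1,2}(\Omega)$ (Remark after Lemma~\ref{lem:Poincare:boundary}), and the second through the final step where $\int_{\partial\Omega} a\,d\sigma > 0$ is quantified.
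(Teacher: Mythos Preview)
Your contradiction-and-compactness argument is correct and follows the same overall strategy as the paper. The normalization $\|\Tr u_j\|_{L^2(\sigma)}=1$ (rather than the paper's $\|u_i\|_{L^2(\Omega)}=1$) together with compactness of the trace is a perfectly good variant, and Fatou suffices to conclude $c=0$ once $a$ is fixed.

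There is, however, a genuine gap in the last paragraph. Your argument, as written, fixes $a$ and produces a constant $C=C(a)$; the assertion that $C$ depends only on $\|a\|_{L^q}$ and $\fint_{\partial\Omega}a$ does not follow from ``these are the only quantities that enter'' --- a compactness proof does not track constants quantitatively. The paper addresses exactly this point: it runs the contradiction over \emph{both} sequences $(u_i)$ and $(a_i)$, with $a_i$ ranging over $\{0\le a\in L^q:\|a\|_{L^q}\le M,\ \int a\,d\sigma=V\}$. The extra compactness needed is weak* compactness of $(a_i)$ in $L^q=(L^k)^*$ (Banach--Alaoglu), paired with \emph{strong} convergence of $\Tr u_i$ in $L^{2k}(\partial\Omega)$ coming from the improved boundary Poincar\'e inequality \eqref{eqn:Poincare:bdry:ball}; this is precisely what lets one pass to the limit in $\int a_i(\Tr u_i)^2$. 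Your Fatou step would not survive varying $a_j$, so to close the stated dependence you should either adopt the paper's weak*/$L^{2k}$ pairing or give a separate argument for uniformity in~$a$.
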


\begin{proof}

To prove \eqref{equiv:norm:a}, we may clearly replace the left-hand side by $\|u\|_{L^2(\Omega)}^2$. By \eqref{eqn:Poincare:ball} we have 

\[
\|u\|_{L^2(\Omega)}^2 \le C\diam(\Omega)^2\|\nabla u\|_{L^2(\Omega)}^2 + 2|\Omega|^2\biggl(\fint_{\Omega} u\biggr)^2.
\]
Recalling the definition of $\mu$ in \eqref{mu}, by \eqref{eqn:a:Poinc} we obtain

\begin{align*}
\biggl(\fint_{\Omega} u\biggr)^2
&\le 2\fint_{\partial\Omega} |\Tr  u - \overline{u}_{\Omega}|^2\,d\mu + 2\fint_{\partial\Omega} (\Tr  u)^2\,d\mu \\
&= 2\mu(\partial\Omega)^{-1}\sigma(\partial\Omega)\fint_{\partial\Omega} |\Tr  u - \overline{u}_{\Omega}|^2\,a\,d\sigma \\
&\quad + 2\mu(\partial\Omega)^{-1}\int_{\partial\Omega} a\,(\Tr  u)^2\,d\sigma \\
&\le C\, \diam(\Omega)^{2 + d-n - \frac{d}{q}}\|a\|_{L^q(\partial\Omega)}\mu(\partial\Omega)^{-1}\|\nabla u\|_{L^2(\Omega)}^2 \\
&\quad + C\mu(\partial\Omega)^{-1}\int_{\partial\Omega} a\,(\Tr  u)^2\,d\sigma,
\end{align*}
which yields \eqref{equiv:norm:a}.

\end{proof}

\begin{rmk}\label{rmk:equivalence:l1}

Under the same assumptions as in Lemma \ref{lem:equivalence}, the same argument shows that there exists a constant $C$ (depending only on $\diam(\Omega)$, $\|a\|_{L^q(\partial\Omega)}$, $\int_{\partial\Omega} a\,d\sigma$, $q$, and the geometric constants of $(\Omega, \sigma)$) such that 

\begin{equation}\label{eqn:equiv:l1}
\biggl(\fint_\Omega |u|\biggr)^2 \le C\int_\Omega |\nabla u|^2 + C \biggl(\int_{\partial\Omega} a\,|\Tr  u|\biggr)^2.
\end{equation}

\end{rmk}

\section{Existence and uniqueness of solutions}
In this section, we establish the existence and uniqueness of solutions to the Robin problem. As we will also consider solutions with homogeneous Robin boundary data and nonzero interior data, we introduce the corresponding definitions now.

\begin{defn}

We define a bilinear form $b:W^{1,2}(\Omega)\times W^{1,2}(\Omega)$ by 

\begin{equation}\label{eqn:rob:form}
 b(u,\varphi) := \int_{\Omega} A\nabla u\nabla \varphi + \int_{\partial\Omega} a u\varphi.
\end{equation}
From now on, we will write $u$ in place of $\Tr(u)$, unless it is unclear that we are referring to the boundary values of the function.

Let $g\in L^2(\Omega)$. We say that $w\in W^{1,2}(\Omega)$ is a solution to 

\[
\begin{cases}
-\Div(A\nabla w) = g & \text{in } \Omega,\\
\partial_{\nu}^A w + a w = 0 & \text{on } \partial\Omega,
\end{cases}
\]
if 

\begin{equation}\label{eqn:poisson:rob}
 b(w,\varphi) = \int_\Omega g\varphi, \qquad \forall\,\varphi\in W^{1,2}(\Omega).
\end{equation}
\end{defn}
Notice that we can also reformulate \eqref{eqn:sol:exp} using the bilinear form $b$. Namely, $u$ is a solution of \eqref{eqn:Robin} if and only if 

\[
 b(u,\varphi) = \int_{\partial\Omega} f\varphi\,d\mu = \int_{\partial\Omega} a f\varphi\,d\sigma, \qquad \forall\,\varphi\in W^{1,2}(\Omega).
\]

\begin{thm}[Existence for Robin solutions]\label{thm:existence}

Let $A$ be a real matrix that satisfies \eqref{ellipticity} and \eqref{ellipticity1}, and let $q>\frac{d}{d-n+2}$. For any $a\in L^q(\partial\Omega,\sigma)$ with $0\le a$ and $a$ not identically zero, and any $\psi\in L^{\frac{2q}{q-1}}(\partial\Omega,\sigma)$, there exists a unique solution of \eqref{eqn:Robin} with $f=\psi$ (see \eqref{eqn:sol:exp}). Furthermore,

\[
 \|u\|_{W^{1,2}(\Omega)} \le C \|\psi\|_{L^{\frac{2q}{q-1}}(\partial\Omega)}.
\]

If $g\in L^2(\Omega)$, there is a unique solution $w\in W^{1,2}(\Omega)$ to \eqref{eqn:poisson:rob}. Moreover,

\[
 \|w\|_{W^{1,2}(\Omega)} \le C \|g\|_{L^2(\Omega)}.
\]

\end{thm}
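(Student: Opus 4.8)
The plan is to prove existence and uniqueness for both boundary value problems via the Lax--Milgram theorem applied to the bilinear form $b$ defined in \eqref{eqn:rob:form}, using Lemma~\ref{lem:equivalence} as the key coercivity input. First I would fix the Hilbert space $H := W^{1,2}(\Omega)$, but re-norm it using the equivalent norm $\|u\|_b := \bigl(\|\nabla u\|_{L^2(\Omega)}^2 + \int_{\partial\Omega} a\,\Tr(u)^2\,d\sigma\bigr)^{1/2}$; that this is indeed an equivalent norm is exactly the content of Lemma~\ref{lem:equivalence} (one inequality) together with the boundedness of the trace operator and the hypothesis $a\in L^q$ (the reverse inequality, via H\"older since $\Tr u\in L^{2k}(\partial\Omega)$ with $k$ the conjugate of $q$ and $k<\tfrac{d}{n-2}$, which is where $q>\tfrac{d}{d-n+2}$ enters). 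With this norm, $b$ is by construction coercive with constant $1$, and I would check boundedness of $b$ on $H\times H$: the interior term is controlled by ellipticity \eqref{ellipticity1}, and the boundary term $\int_{\partial\Omega} a\,uv$ is controlled by $\|a\|_{L^q}\|\Tr u\|_{L^{2k}}\|\Tr v\|_{L^{2k}}$, again finite by the improved trace integrability.

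Next I would check that the right-hand sides of \eqref{eqn:sol:exp} and \eqref{eqn:poisson:rob} define bounded linear functionals on $H$. For the Robin problem with datum $\psi$, the functional is $\varphi\mapsto \int_{\partial\Omega} a\psi\,\Tr(\varphi)\,d\sigma$; by H\"older with three exponents, $|\int a\psi\,\Tr\varphi| \le \|a\|_{L^q}\|\psi\|_{L^{p}}\|\Tr\varphi\|_{L^{2k}}$ where $p = \tfrac{2q}{q-1}$ is chosen precisely so that $\tfrac1q + \tfrac1p + \tfrac1{2k} = 1$ (indeed $\tfrac1{2k} = \tfrac12(1-\tfrac1q)$, so $\tfrac1q + \tfrac1p = 1 - \tfrac1{2k} = \tfrac12 + \tfrac1{2q}$, giving $\tfrac1p = \tfrac12 - \tfrac1{2q} = \tfrac{q-1}{2q}$); then bound $\|\Tr\varphi\|_{L^{2k}}$ by $\|\varphi\|_{W^{1,2}}$ via the boundary Poincar\'e inequality (applied at scale $r = \diam(\Omega)$) plus the trace bound. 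For the Poisson problem with interior datum $g\in L^2(\Omega)$, the functional $\varphi\mapsto\int_\Omega g\varphi$ is bounded because $W^{1,2}(\Omega)\hookrightarrow L^2(\Omega)$. Lax--Milgram then yields unique solutions $u, w\in W^{1,2}(\Omega)$ in both cases.

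For the quantitative bounds, in the Robin case I would apply coercivity and boundedness of the functional: $\|u\|_b^2 = b(u,u) = \int a\psi\,\Tr u \le C\|\psi\|_{L^p(\partial\Omega)}\|u\|_b$, giving $\|u\|_b\le C\|\psi\|_{L^p}$, and then convert back to $\|u\|_{W^{1,2}}$ using the equivalence of norms. In the Poisson case similarly $\|w\|_b^2 = \int_\Omega gw \le \|g\|_{L^2(\Omega)}\|w\|_{L^2(\Omega)} \le C\|g\|_{L^2(\Omega)}\|w\|_{W^{1,2}(\Omega)} \le C\|g\|_{L^2(\Omega)}\|w\|_b$, and again pass back to the standard norm. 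Note the constant $C$ in both estimates absorbs the norm-equivalence constant from Lemma~\ref{lem:equivalence}, which depends on $\Omega$, $\|a\|_{L^q(\partial\Omega)}$ and $\fint_{\partial\Omega} a\,d\sigma$.

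The only genuinely delicate point — and the main obstacle — is verifying coercivity, i.e. that $b$ controls the full $W^{1,2}$ norm and not merely $\|\nabla u\|_{L^2}$; this is non-trivial precisely because $a$ may vanish on a large portion of $\partial\Omega$, so one cannot argue pointwise and must instead use the global compactness-and-contradiction argument of Lemma~\ref{lem:equivalence}, which crucially combines the compact embedding $W^{1,2}(\Omega)\hookrightarrow L^2(\Omega)$, the boundary Poincar\'e inequality \eqref{eqn:Poincare:bdry:ball}, and weak-$*$ compactness of the admissible class of parameters $a$. Everything else — boundedness of $b$, boundedness of the two functionals, and the a priori estimates — is a routine matter of H\"older's inequality with the exponent bookkeeping dictated by $q > \tfrac{d}{d-n+2}$ and the improved trace integrability $\Tr: W^{1,2}(\Omega)\to L^{2k}(\partial\Omega)$ with $k$ the conjugate exponent of $q$.
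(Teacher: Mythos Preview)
Your proposal is correct and follows essentially the same strategy as the paper's proof: continuity of $b$ via H\"older and the boundary Poincar\'e inequality (Lemma~\ref{lem:Poincare:boundary}), coercivity via Lemma~\ref{lem:equivalence}, then Lax--Milgram. The paper's proof is stated in three lines, while you have helpfully unpacked the exponent bookkeeping and the a~priori estimates, but the ingredients and the logical flow are identical.
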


\begin{proof}

By H\"older's inequality and Lemma \ref{lem:Poincare:boundary}, the continuity of $b$ follows. The coercivity of $b$ on $W^{1,2}(\Omega)$ follows from \eqref{equiv:norm:a}. We can then apply the Lax–Milgram theorem, which completes the proof.

\end{proof}

\begin{rmk}

In the definition of solutions, we require that the equations be satisfied for all test functions $\varphi\in W^{1,2}(\Omega)$. However, thanks to the density of $C^\infty_c(\R^n)$ in $W^{1,2}(\Omega)$, it is enough to test the equations against functions $\varphi\in C^\infty_c(\R^n)$.

\end{rmk}\section{The Neumann problem}
In this section, we establish the H\"older continuity of weak solutions to the Neumann problem

\[
\begin{cases}
-\Div(A\nabla u)=0 & \textup{in } B\cap\Omega,\\[2mm]
\partial_{\nu}^A u=\tau & \textup{on } B\cap\partial\Omega,\\[2mm]
\tau\in L^q(\partial\Omega),\quad q>\dfrac{d}{d-n+2},
\end{cases}
\]
where $B$ is a ball centered at a point of $\partial\Omega$. Throughout this section, we fix a real matrix $A$ satisfying \eqref{ellipticity} and \eqref{ellipticity1}.

Our argument is strongly inspired by the proof of interior H\"older continuity (see \cite[Chapter 4]{HanL00}) for weak solutions of

\[
-\Div(A\nabla u)=h\quad \textup{in } \widetilde{B},
\]
where $\widetilde{B}$ is a ball such that $2\widetilde{B}\subset\Omega$ and $h\in L^q(\Omega)$,  $q>\dfrac{n}{2}$.

We will see that, thanks to the weak formulation of the Neumann problem, the applicability of these interior methods hinges on choosing appropriate test functions and on employing suitable Poincar\'e inequalities (see Lemma \ref{lem:Poincare:boundary} and Theorem \ref{thm:Poinc:int}). For the Dirichlet problem, admissible test functions are required to vanish on the boundary, whereas in the weak formulation of the Neumann (and Robin) problem, this constraint is absent. This distinction allows us to effectively ``hide the boundary'' and argue as if we were working in the interior of the domain.

We begin by proving local boundedness of subsolutions. Related results for the Neumann problem can be found in \cite{Kim15}, \cite[Lemma 3.2]{DavDEMM}, and \cite[Lemma 3.1]{LaS04}.

\begin{lem}[Moser for Neumann Subsolutions]\label{lem:moser}

Let $0\in\partial\Omega$, $0<r\le \diam(\Omega)/4$, and let $u\in W^{1,2}(\Omega)$ be a Neumann subsolution, i.e.,

\begin{equation}\label{eqn:neumann:subsol}
\int_{\Omega\cap B(0,2r)} A\nabla u\nabla\varphi
+ \int_{\Delta(0,2r)} V u\varphi\,d\sigma
\le \int_{\Delta(0,2r)} \tau\,\varphi\,d\sigma,
\end{equation}
for all $\varphi\in W^{1,2}(\Omega\cap B(0,2r))$ such that $\varphi\ge 0$ and $\varphi\equiv 0$ on $\Omega\setminus B(0,\rho)$ for some $\rho<2r$. Here $\tau, V\in L^q(\Delta(0,2r),\sigma)$, with $q>\frac{d}{d-n+2}$ and 

\[
 r^{2-n+d-\frac{d}{q}}\,\|V\|_{L^q(\Delta(0,2r))}\le 1.
\]

Then, setting $u_+:=\max(u,0)$, we have, for all $p>0$,

\begin{equation}\label{eqn:moser:bdry}
\sup_{\Omega\cap B(0,r)} u_+
\le C r^{2-n+d-\frac{d}{q}}\,\|\tau\|_{L^q(\Delta(0,2r))}
+ C\biggl(\fint_{\Omega\cap B(0,2r)} u_+^p\biggr)^{\frac{1}{p}}
+ C\biggl(\fint_{\Delta(0,2r)} u_+^p\biggr)^{\frac{1}{p}},
\end{equation}
where $C$ depends only on the geometric constants of $(\Omega,\sigma)$, on the ellipticity constant of $A$, and on $q$ and $p$.

Moreover, if $\tau\equiv 0\equiv V$, then \eqref{eqn:moser:bdry} improves to

\begin{equation}\label{eqn:moser:zero:bdry}
\sup_{\Omega\cap B(0,r)} u_+
\le C\biggl(\fint_{\Omega\cap B(0,2r)} u_+^p\biggr)^{\frac{1}{p}}.
\end{equation}

\end{lem}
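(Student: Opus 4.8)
The plan is to run a Moser iteration at the boundary, mimicking the interior De~Giorgi--Nash--Moser argument for $-\Div(A\nabla u)=Vu+\tau$ (as in \cite[Chapter 4]{HanL00}), but using the boundary Poincaré inequality of Lemma~\ref{lem:Poincare:boundary} in place of the interior Sobolev inequality whenever the inhomogeneity has to be reabsorbed. The exponent $2-n+d-\tfrac dq$ is precisely the one that makes $r^{2-n+d-\frac dq}\|\tau\|_{L^q}$ and $r^{2-n+d-\frac dq}\|V\|_{L^q}$ scale invariant, so after rescaling we may assume $r=1$; then $\|V\|_{L^q(\Delta(0,2))}\le C$. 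Put $k:=\|\tau\|_{L^q(\Delta(0,2))}$ (if $k=0$, replace it by $\varepsilon>0$ and let $\varepsilon\to0$ at the end) and $w:=u_{+}+k$, so $w\ge k>0$ and $\|\tau/k\|_{L^q(\Delta(0,2))},\|V\|_{L^q(\Delta(0,2))}\le C$. Since $w\le u_{+}+k$, it suffices to bound $\sup_{B(0,1)\cap\Omega}w$ by $C\big[\big(\fint_{B(0,2)\cap\Omega}w^{p}\big)^{1/p}+\big(\fint_{\Delta(0,2)}w^{p}\big)^{1/p}\big]$.

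First I would establish an energy (Caccioppoli) estimate for powers of $w$. Fix $\beta\ge1$, radii $1\le\rho'<\rho''\le\tfrac32$, a cutoff $\eta$ with $\eta\equiv1$ on $B(0,\rho')$, $\supp\eta\subset B(0,\rho'')$, $0\le\eta\le1$, $|\nabla\eta|\lesssim(\rho''-\rho')^{-1}$, and Lipschitz truncations $\Phi=\Phi_{\beta,M}$ (equal to $t^{\beta}$ for $t\le M$, affine for $t>M$) and $\Psi$ with $\Psi'=(\Phi')^{2}$. Testing \eqref{eqn:neumann:subsol} with $\varphi=\eta^{2}\big(\Psi(w)-\Psi(k)\big)$ — admissible since $\varphi\ge0$, $\varphi\in W^{1,2}$, $\supp\varphi\subset B(0,\rho'')\cap\Omega$ with $\rho''<2$, and $\varphi\equiv0$ on $\{u\le0\}$ — and using ellipticity on the principal term, an $\varepsilon$-Cauchy--Schwarz on the cutoff cross term, and the pointwise relations $|\nabla\Phi(w)|^{2}=\Psi'(w)|\nabla w|^{2}$, $t\Psi(t)\lesssim\beta\Phi(t)^{2}$, $t\Phi'(t)\le\beta\Phi(t)$, one gets
\[
\int_{\Omega}\eta^{2}|\nabla\Phi(w)|^{2}\,dx\ \lesssim\ \int_{\Omega}|\nabla\eta|^{2}\Phi(w)^{2}\,dx+\beta\int_{\Delta(0,2)}\big(|V|+|\tau|/k\big)\,\eta^{2}\Phi(w)^{2}\,d\sigma .
\]
Since $\eta^{2}\Phi(w)^{2}$ is supported in $\Delta(0,\rho'')$, the surface integral is handled by Hölder with exponents $q$ and its dual $k_{q}$ — and this is the one place the hypothesis is used: $q>\tfrac{d}{d-n+2}$ forces $k_{q}<\tfrac{d}{n-2}$, so Lemma~\ref{lem:Poincare:boundary} applies at exponent $2k_{q}$. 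Interpolating $\|\eta\Phi(w)\|_{L^{2k_{q}}(\Delta(0,\rho''))}$ between $L^{2}(\Delta(0,\rho''))$ and $L^{2k'}(\Delta(0,\rho''))$ for some $k_{q}<k'<\tfrac{d}{n-2}$, estimating the latter by Lemma~\ref{lem:Poincare:boundary}, and applying Young's inequality, one absorbs a small multiple of $\int_{\Omega}\eta^{2}|\nabla\Phi(w)|^{2}$ into the left side (taking the threshold $\sim\beta^{-1}$ costs only a polynomial factor $\beta^{a}$), arriving at
\[
\int_{B(0,\rho')\cap\Omega}|\nabla\Phi(w)|^{2}\,dx\ \lesssim\ \frac{\beta^{a}}{(\rho''-\rho')^{2}}\Big[\int_{B(0,\rho'')\cap\Omega}\Phi(w)^{2}\,dx+\int_{\Delta(0,\rho'')}\Phi(w)^{2}\,d\sigma\Big].
\]
The normalization $\|V\|_{L^{q}},\|\tau/k\|_{L^{q}}\le C$ is exactly what keeps the implicit constant independent of $\|V\|_{L^{q}}$ and $\|\tau\|_{L^{q}}$.

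Next I would iterate. Set $S(m,\rho):=\big(\fint_{B(0,\rho)\cap\Omega}w^{m}\big)^{1/m}+\big(\fint_{\Delta(0,\rho)}w^{m}\big)^{1/m}$, fix $\chi\in\big(1,\tfrac{d}{n-2}\big)$ (nonempty since $d>n-2$), and combine the energy estimate (with $\Phi(w)=w^{m/2}$, truncated) with the interior Sobolev--Poincaré inequality (Theorem~\ref{thm:Poinc:int}, which improves the interior integrability to exponent $\tfrac{n}{n-2}\ge\chi$) and the boundary Poincaré inequality (Lemma~\ref{lem:Poincare:boundary} at exponent $2\chi$), both applied to $\eta\Phi(w)$; since all constants enter to a power that carries through to the $1/m$-normalization, after letting $M\to\infty$ this gives
\[
S(\chi m,\rho')\ \le\ \Big(\frac{C m^{a}}{(\rho''-\rho')^{2}}\Big)^{1/m}S(m,\rho''),\qquad 1\le\rho'<\rho''\le\tfrac32 .
\]
Taking $m_{j}=2\chi^{j}$, $\rho_{j}=1+2^{-j-1}$ and multiplying (the infinite product of constants converges since $\sum_{j}m_{j}^{-1}\log(Cm_{j})<\infty$) yields $\sup_{B(0,1)\cap\Omega}w+\sup_{\Delta(0,1)}\Tr w\le C\,S(2,2)$; as $w\le u_{+}+k$ this is \eqref{eqn:moser:bdry} for $p=2$, and undoing the rescaling reinstates the term $r^{2-n+d-\frac dq}\|\tau\|_{L^{q}(\Delta(0,2r))}$. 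The passage to general $p>0$ is the usual self-improvement: for $p\ge2$ it is Hölder on averages; for $0<p<2$ one inserts $w^{2}=w^{2-p}w^{p}\le(\sup w)^{2-p}w^{p}$ into the $p=2$ estimate on nested balls, uses Young's inequality to absorb $\tfrac12\sup w$, and runs the standard iteration lemma, carrying the boundary $\sup$ along identically. Finally, when $\tau\equiv V\equiv0$ the surface integral in \eqref{eqn:neumann:subsol} is absent, so the energy estimate has no boundary term and no surface norm ever enters; running the same iteration with $w=u_{+}+\varepsilon$, $\varepsilon\to0$, using only interior quantities gives the improved bound \eqref{eqn:moser:zero:bdry}.

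The main obstacle is exactly this boundary inhomogeneity step: one must turn the surface integral $\int_{\Delta}(|V|+|\tau|/k)\,\eta^{2}\Phi(w)^{2}\,d\sigma$ into something absorbable into the interior Dirichlet energy, which is precisely what the boundary Poincaré inequality of Lemma~\ref{lem:Poincare:boundary} provides and why the threshold $q>\tfrac{d}{d-n+2}$ (equivalently, dual exponent $<\tfrac{d}{n-2}$) is needed — without it the argument has no starting point. The rest is bookkeeping of two kinds: tracking the scale-invariant quantities so that, under $r^{2-n+d-\frac dq}\|V\|_{L^q}\le1$, the constant depends only on the geometric constants, the ellipticity, $q$ and $p$; and propagating the interior and the boundary $L^{m}$-norms of $w$ together through the iteration, since each enters the improvement of the other.
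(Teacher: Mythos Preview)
Your proposal is correct and follows essentially the same route as the paper: rescale to $r=1$, set $\bar u=u_{+}+k$ with $k=\|\tau\|_{L^q}$, test \eqref{eqn:neumann:subsol} against cut-off truncated powers to obtain a Caccioppoli inequality with a surface remainder $\int_{\Delta}(|V|+|\tau|/k)\eta^{2}\Phi(w)^{2}$, absorb this remainder via H\"older, interpolation between $L^{2}$ and $L^{2k'}$ on $\Delta$, and the boundary Poincar\'e inequality \eqref{eqn:Poincare:bdry:ball} (using that the dual exponent of $q$ is $<\tfrac{d}{n-2}$), then iterate the resulting self-improvement $S(\chi m,\rho')\le (Cm^{a}/(\rho''-\rho')^{2})^{1/m}S(m,\rho'')$ with $\chi\in(1,\tfrac{d}{n-2})$; the extension to $p<2$ is done by the standard absorption/iteration lemma, and the $\tau\equiv V\equiv 0$ case drops all boundary integrals. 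The only cosmetic differences are your use of abstract Lipschitz truncations $\Phi_{\beta,M},\Psi$ in place of the paper's explicit $\bar u_{m}^{\beta}\bar u$, and that the paper spells out the $p<2$ step by splitting into interior and boundary balls before invoking the iteration lemma.
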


\begin{rmk}

Here and in the following, we denote $B(\xi,r)\cap\Omega$ by $D(\xi,r)$.

\end{rmk}

\begin{rmk}

In \cite{LaS04} one has $V\equiv\tau\equiv 0$, while in \cite{Kim15} one has $V\equiv 0$, and in both cases $d=n-1$.

Note that in \eqref{eqn:moser:bdry} we have a boundary integral of $u$ on the right-hand side, whereas in \cite[Lemma 3.2]{DavDEMM} (where $V\equiv 0$ and $\tau$ is constant) no such term appears. When $V\equiv 0$, this issue can be handled by applying a suitable Caccioppoli inequality together with \eqref{eqn:Poincare:bdry:ball}. For the sake of simplicity, we do not pursue this refinement here.
\end{rmk}\begin{proof}
The proof is classical and relies on Moser iteration. Up to scaling, we may assume that $r=1$ (see \cite[Remark 6]{DavDEMM}). We first deal with the case $p=2$.

Fix $m>0$ and set

\[
\bar{u}_m:=
\begin{cases}
u_++k & \text{if } u<m,\\
k+m & \text{if } u\ge m,
\end{cases}
\]
where $k=\|\tau\|_{L^q(\Delta(0,2))}$ if $\tau$ is not identically zero on $\Delta(0,2)$. If $\tau\equiv 0$, then $k>0$ is an arbitrary positive constant that we will send to zero at the end of the proof. For $\beta\ge0$ and $\eta\in C_c^\infty(B(0,3/2))$, set $\bar{u}:=u_++k$ and choose

\[
\varphi:=\eta^2\bigl(\bar{u}_m^\beta\bar{u}-k^{\beta+1}\bigr)
\]
as a test function in \eqref{eqn:neumann:subsol}. Notice that $\varphi\equiv0$ on the set $\{u<0\}$; therefore all integrals can be taken over the set $\{u\ge0\}$. Using Young’s inequality $ab\le C_\varepsilon a^2+\varepsilon b^2$ and the uniform ellipticity and boundedness of $A$, we obtain

\begin{align*}
\beta \int_{D(0,2)}\eta^2 \, \bar{u}_m^\beta\lvert\nabla \bar{u}_m\rvert^2
&+\int_{D(0,2)}\eta^2 \, \bar{u}_m^\beta\lvert\nabla \bar{u}\rvert^2\\
&\lesssim \int_{D(0,2)}\lvert\nabla \eta\rvert^2 \bar{u}_m^\beta \bar{u}^2
+\int_{\Delta(0,2)}\bigl(\bar{u}\lvert V\rvert+\lvert\tau\rvert\bigr)\eta^2 \bar{u}_m^\beta \bar{u}\\
&\le \int_{D(0,2)}\lvert\nabla \eta\rvert^2 \bar{u}_m^\beta \bar{u}^2
+\int_{\Delta(0,2)}\bigl(\lvert V\rvert+\lvert\bar{\tau}\rvert\bigr)\eta^2 \bar{u}_m^\beta \bar{u}^2,
\end{align*}
where $\bar{\tau}:=\frac{\lvert\tau\rvert}{k}$. Notice that $\|\bar{\tau}\|_{L^q(\Delta(0,2))}\le1$ and, by hypothesis, $\|V\|_{L^q(\Delta(0,2))}\le1$.

Set now $w:=\bar{u}_m^{\beta/2}\bar{u}$. Then

\begin{align}\label{eqn:w:moser}
\int_{D(0,2)}\lvert\nabla (w\eta)\rvert^2
&\le C(\beta+1)\int_{D(0,2)}w^2\lvert\nabla \eta\rvert^2
+ C(\beta+1)\int_{\Delta(0,2)}(\lvert V\rvert+\bar{\tau})(w\eta)^2\\
&\le C(\beta+1)\int_{D(0,2)}w^2\lvert\nabla \eta\rvert^2
+ C(\beta+1)\|w\eta\|_{L^{\frac{2q}{q-1}}(\Delta(0,2))}^2.
\end{align}

Observe that

\[
\bigl|(B(0,2)\setminus B(0,3/2))\cap\Omega\bigr|\approx1,
\]
where the implicit constants depend only on the geometric constants of $(\Omega, \sigma)$. Indeed, since $4\le\diam(\Omega)$, the annulus $B(0,2)\setminus B(0,3/2)$ contains large portions of Harnack chains connecting corkscrew points of $B(0,3/2)$ to points outside $B(0,2)$.

Choose now some $\tilde{q}$ with $q>\tilde{q}>\frac{d}{d-n+2}$, so that

\[
2<\frac{2q}{q-1}<\frac{2\tilde{q}}{\tilde{q}-1}<\frac{2d}{n-2}.
\]

We then use the interpolation inequality and Lemma \ref{lem:Poincare:boundary}, applied to $w\eta$ with

\[E:=\bigl(B(0,2)\setminus B(0,3/2)\bigr)\cap\Omega.
\]
This gives, for any $\varepsilon>0$,

\begin{align*}
\|w\eta\|_{L^{\frac{2q}{q-1}}(\Delta(0,2))}
&\le C\varepsilon\,\|w\eta\|_{L^{\frac{2\tilde{q}}{\tilde{q}-1}}(\Delta(0,2))}
+ C\varepsilon^{-\frac{\tilde{q}}{q-\tilde{q}}}\,\|w\eta\|_{L^2(\Delta(0,2))}\\
&\le C\varepsilon\,\|\nabla (w\eta)\|_{L^2(\Delta(0,2))}
+ C\varepsilon^{-\frac{\tilde{q}}{q-\tilde{q}}}\,\|w\eta\|_{L^2(\Delta(0,2))}.
\end{align*}
We may assume that the constant $C$ in the previous inequality is the same as in \eqref{eqn:w:moser}, up to replacing it by the maximum of the two constants. Notice that the constant $K$ in \eqref{eqn:Poincare:bdry:ball} is not needed here, since $w\eta$ is supported in $B(0,2)$.

Set $\alpha:=\frac{\tilde{q}}{q-\tilde{q}}+1$ and choose $\varepsilon:=(2C)^{-2}(\beta+1)^{-1/2}$. Then \eqref{eqn:w:moser} becomes

\[
\int_{D(0,2)}\lvert\nabla (w\eta)\rvert^2
\lesssim (\beta+1)^{\alpha}\biggl(\int_{D(0,2)}w^2\lvert\nabla\eta\rvert^2+\int_{\Delta(0,2)}w^2\eta^2\biggr).
\]
Next, fix some $1<\chi<\frac{d}{n-2}<\frac{n}{n-2}$, and apply both \eqref{eqn:Poincare:ball} and \eqref{eqn:Poincare:bdry:ball} to $w\eta$ to obtain

\[
\biggl(\int_{D(0,2)}(w\eta)^{2\chi}\biggr)^{1/\chi}
+\biggl(\int_{\Delta(0,2)}(w\eta)^{2\chi}\biggr)^{1/\chi}
\le C (\beta+1)^{\alpha}\biggl(\int_{D(0,2)}w^2\lvert\nabla\eta\rvert^2+\int_{\Delta(0,2)}w^2\eta^2\biggr).
\]

For any $0<r<R<\tfrac{3}{2}$, choose $0\le\eta\le1$ supported in $B(0,R)$ such that $\eta\equiv1$ on $B(0,r)$ and $\lvert\nabla \eta\rvert\le \frac{2}{R-r}$. Recalling that $w=\bar{u}_m^{\beta/2}\bar{u}$, setting $\gamma:=\beta+2$ and letting $m\to\infty$, we obtain by the monotone convergence theorem

\[
\|\bar{u}\|_{L^{\gamma\chi}(D(0,r))}
+\|\bar{u}\|_{L^{\gamma\chi}(\Delta(0,r))}
\le \biggl(C\frac{\gamma-1}{(R-r)^2}\biggr)^{\alpha/\gamma}\bigl(\|\bar{u}\|_{L^{\gamma}(D(0,R))}+\|\bar{u}\|_{L^{\gamma}(\Delta(0,R))}\bigr).
\]

Since $\gamma\ge2$ is arbitrary, for each integer $i\ge0$ set $\gamma_i=2\chi^i$ and $r_i=1+2^{-(i+1)}$. Iterating the previous estimate yields

\[
\|\bar{u}\|_{L^{\gamma_i}(D(0,r_i))}
+\|\bar{u}\|_{L^{\gamma_i}(\Delta(0,r_i))}
\le C^{\sum_{j\le i}\frac{j}{\chi^j}}
\bigl(\|\bar{u}\|_{L^{2}(D(0,2))}+\|\bar{u}\|_{L^{2}(\Delta(0,2))}\bigr).
\]
Letting $i\to\infty$, we obtain \eqref{eqn:moser:bdry} for $p=2$.

For $p>2$ it is enough to use H\"older’s inequality, so we focus on $p<2$. First, observe that for any $\theta\in(0,1)$ and $s\in(0,1)$ we have

\begin{equation}\label{eqn:moser:scaled}
\sup_{D(0,2s\theta)}u_+
\lesssim \|\tau\|_{L^q(\Delta(0,2s))}
+ (s-s\theta)^{-\frac{n}{2}}\bigl(\|u_+\|_{L^2(D(0,2s))}+\|u_+\|_{L^2(\Delta(0,2s))}\bigr).
\end{equation}
Indeed, let $x\in B(0,2s\theta)$ be a Lebesgue point of $u_+$. If $\delta(x)\ge\frac{1-\theta}{3}s$, the ball $B:=B\bigl(x,\tfrac{1-\theta}{6}s\bigr)$ satisfies $2B\subset\Omega\cap B(0,2s)=D(0,2s)$. Since $u$ is a weak subsolution of $-\Div(A\nabla u)\le0$ in $2B$, the interior Moser lemma (see, e.g., \cite[Theorem 4.1]{HanL00}) gives

\[
u_+(x)\le\sup_{B\cap\Omega}u_+
\le C(s-s\theta)^{-\frac{n}{2}}\|u_+\|_{L^2(2B\cap\Omega)}
\le C(s-s\theta)^{-\frac{n}{2}}\|u_+\|_{L^2(D(0,2s))},
\]
which is bounded above by the right-hand side of \eqref{eqn:moser:scaled}.

If instead $\delta(x)<\frac{1-\theta}{3}s$, we can choose $\xi\in\partial\Omega$ such that $\lvert\xi-x\rvert<\frac{1-\theta}{3}s$. We then apply the boundary version of the Moser lemma for $p=2$, proved above, to the ball $B\bigl(\xi,\tfrac{1-\theta}{3}s\bigr)$, which satisfies $B\bigl(\xi,\tfrac{2(1-\theta)}{3}s\bigr)\subset B(0,2s)$. Hence

\begin{align*}
 u_+(x)
 &\le\sup_{D(\xi,\frac{1-\theta}{3}s)}u_+\\
 &\lesssim \|\tau\|_{L^q(D(\xi,\frac{2(1-\theta)}{3}s))}
 +(s-s\theta)^{-\frac{n}{2}}\|u_+\|_{L^2(D(\xi,\frac{2(1-\theta)}{3}s))}
 +(s-s\theta)^{-\frac{d}{2}}\|u_+\|_{L^2(\Delta(\xi,\frac{2(1-\theta)}{3}s))}\\
 &\lesssim\|\tau\|_{L^q(D(0,2s))}
 +(s-s\theta)^{-\frac{n}{2}}\bigl(\|u_+\|_{L^2(D(0,2s))}+\|u_+\|_{L^2(\Delta(0,2s))}\bigr),
\end{align*}
where we used that $d<n$ and $(s-s\theta)^{2-n+d-\frac{d}{q}}\le1$ since $2-n+d-\frac{d}{q}>0$.
Now fix $p<2$. Applying H\"older’s inequality to \eqref{eqn:moser:scaled} yields

\begin{align*}
\|u_+\|_{L^\infty(D(0,2s\theta))}
&\le C(s-s\theta)^{-\frac{n}{2}}\bigl(\|u_+\|_{L^p(D(0,2s))}^{\frac{p}{2}}+\|u_+\|_{L^p(\Delta(0,2s))}^{\frac{p}{2}}\bigr)\|u_+\|_{L^\infty(D(0,2s))}^{\frac{2-p}{2}}\\
&+ C\|\tau\|_{L^q(\Delta(0,2s))}\\
&\le \tfrac{1}{2}\|u_+\|_{L^\infty(D(0,2s))}
+ C(s-s\theta)^{-\frac{n}{2}}\bigl(\|u_+\|_{L^p(D(0,2))}+\|u_+\|_{L^p(\Delta(0,2))}\bigr)\\
&+ C\|\tau\|_{L^q(\Delta(0,2))}.
\end{align*}

Set

\[
f(t):=\|u_+\|_{L^\infty(D(0,2t))}, \qquad
A:=\|u_+\|_{L^p(D(0,2))}+\|u_+\|_{L^p(\Delta(0,2))}, \qquad
B:=\|\tau\|_{L^q(\Delta(0,2))}.
\]
Since $f$ is positive and bounded for $0\le t\le2/3$, and

\[
f(t)\le \tfrac{1}{2}f(s)+C\frac{A}{(s-t)^{n/2}}+CB, \qquad 0\le t<s\le2/3,
\]
Lemma 4.3 in \cite{HanL00} implies

\[
f(t)\le C\frac{A}{(s-t)^{n/2}}+CB,
\]
which finishes the proof of \eqref{eqn:moser:bdry}.

For \eqref{eqn:moser:zero:bdry}, observe that when $\tau\equiv0\equiv V$ the argument above can be repeated, with the only difference that all boundary integrals vanish.

\end{proof}
We now prove a weak Harnack inequality. Such an inequality was established for Neumann solutions with zero Neumann boundary data in \cite{HofS25}.

\begin{lem}[Weak Harnack for Neumann supersolutions]

Let $0 \in \partial\Omega$, let $K$ be the constant defined in Lemma \ref{lem:Poincare:boundary}, and let $0 < r < \diam(\Omega)(2K)^{-1}$. Suppose that $u \in W^{1,2}(\Omega)$ is a positive Neumann supersolution, i.e.,

\begin{equation}\label{eqn:neumann:supersol}
\int_{\Omega \cap B(0,2Kr)} A \nabla u \cdot \nabla \varphi \ge \int_{\Delta(0,2Kr)} \tau \, \varphi \, d\sigma,
\end{equation}
for all $\varphi \in W^{1,2}(\Omega \cap B(0,2Kr))$ such that $\varphi \ge 0$ and $\varphi \equiv 0$ on $\Omega \setminus B(0,\rho)$ for some $\rho < 2Kr$. Here $\tau \in L^q(\Delta(0,2Kr),\sigma)$, with $q > \frac{d}{d-n+2}$. Then there exists some $p_0 > 0$, depending only on the ellipticity constant and on the geometric constants of $(\Omega, \sigma)$, such that

\begin{equation}\label{eqn:weak:harnack}
 \biggl( \fint_{\Delta(0,r)} u^{p_0} \biggr)^{\frac{1}{p_0}} +  \biggl( \fint_{\Omega \cap B(0,r)} u^{p_0} \biggr)^{\frac{1}{p_0}} 
\le C\biggl(\inf_{\Omega \cap B(0,r/2)} u + r^{2-n+d-\frac{d}{q}} \| \tau \|_{L^q(\Delta(0,2Kr))}\biggr),
\end{equation}
where $C$ depends only on the geometric constants, the ellipticity constant of $A$, and $q$.

\end{lem}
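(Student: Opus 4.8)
The plan is to follow the classical Moser iteration argument for the weak Harnack inequality, but in the boundary setting, using the "hide the boundary" philosophy already employed in the proof of Lemma \ref{lem:moser}. First I would reduce to $r=1$ by scaling, exactly as in Lemma \ref{lem:moser}. The heart of the matter splits into two pieces. \textbf{Step 1 (power down from small positive powers to $u^{-\infty}$).} Working with $v:=u+k$ where $k:=\|\tau\|_{L^q(\Delta(0,2K))}$ (and $k>0$ arbitrary, to be sent to $0$, when $\tau\equiv 0$), I would plug test functions of the form $\varphi=\eta^2 v^{\beta}$ with $\beta<0$, $\beta\neq -1$, into the supersolution inequality \eqref{eqn:neumann:supersol}. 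The term $\int_{\Delta}\tau\varphi$ is controlled by $k\int_{\Delta}\eta^2 v^{\beta}\le\int_{\Delta}\eta^2 v^{\beta+1}$ since $v\ge k$, so it can be absorbed. Setting $w:=v^{(\beta+1)/2}$, one gets the Caccioppoli-type estimate $\int|\nabla(w\eta)|^2\lesssim (\text{const depending on }\beta)\bigl(\int w^2|\nabla\eta|^2+\int_{\Delta} w^2\eta^2\bigr)$, and then applying both Poincaré inequalities \eqref{eqn:Poincare:ball} and \eqref{eqn:Poincare:bdry:ball} (as in Lemma \ref{lem:moser}) gives the reverse-Hölder step with exponent gain $\chi\in(1,\frac{d}{n-2})$, now simultaneously on the interior ball $D(0,r)$ and the surface ball $\Delta(0,r)$. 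Iterating over shrinking radii and negative powers $\gamma_i=-2\chi^i$ (resp. positive powers for the ascending branch, after the exponent crosses $-1$) yields, for a suitable $p_0>0$,
\[
\Bigl(\fint_{\Delta(0,1)}v^{p_0}\Bigr)^{1/p_0}+\Bigl(\fint_{D(0,1)}v^{p_0}\Bigr)^{1/p_0}\lesssim \Bigl(\fint_{\Delta(0,3/2)}v^{-p_0}\Bigr)^{-1/p_0}+\Bigl(\fint_{D(0,3/2)}v^{-p_0}\Bigr)^{-1/p_0},
\]
and a parallel descending iteration with negative exponents going to $-\infty$ gives $\bigl(\fint v^{-p_0}\bigr)^{-1/p_0}\gtrsim \inf_{D(0,1/2)} v$. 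Combining, $\bigl(\fint_{\Delta(0,1)}v^{p_0}\bigr)^{1/p_0}+\bigl(\fint_{D(0,1)}v^{p_0}\bigr)^{1/p_0}\lesssim \inf_{D(0,1/2)}v$, which after unscaling and recalling $v=u+k$ is exactly \eqref{eqn:weak:harnack}.

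\textbf{Step 2 (the crossover / John–Nirenberg lemma).} The delicate point, as always in the weak Harnack proof, is bridging from "small positive power" to "small negative power", i.e. showing $\fint_{\Delta}e^{p_0|\log v - \overline{\log v}|}\lesssim 1$ for some $p_0>0$ on both the boundary cube and the interior region. For this I would test \eqref{eqn:neumann:supersol} with $\varphi=\eta^2 v^{-1}$ (the exponent $\beta=-1$ excluded above), which produces $\int\eta^2|\nabla\log v|^2\lesssim \int|\nabla\eta|^2+\int_{\Delta}\eta^2$, showing $\log v$ has bounded oscillation in the appropriate averaged sense. One then needs a John–Nirenberg-type inequality valid simultaneously for the interior and the ADR boundary; since $(\partial\Omega,\sigma)$ is a space of homogeneous type and the Whitney/tent structure \eqref{eqn:tent:meas}--\eqref{eqn:cont:cube:tent} ties the two together, the version in \cite{DavFM20} (or an adaptation of \cite[Theorem 7.21, Lemma 4.1]{HanL00} transported via the cones $\gamma^*_Q$) applies. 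This is the step I expect to be the main obstacle: one must be careful that the BMO-type estimate for $\log v$ holds on the hybrid object (surface ball together with the Whitney region above it), uniformly in the center and scale, and that the John–Nirenberg constant depends only on the geometric constants and ellipticity. Once that is in hand, choosing $p_0$ below the John–Nirenberg threshold closes the gap between Step 1's two iteration branches.

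A few bookkeeping remarks. The constant $K$ and the enlarged ball $B(0,2Kr)$ appear precisely so that every application of \eqref{eqn:Poincare:bdry:ball} to a function supported in $B(0,R)$ with $R<3/2$ has its dilated ball $B(0,KR)$ still inside $B(0,2Kr)$ after unscaling; this is the analogue of the remark in Lemma \ref{lem:moser} that "the constant $K$ is not needed since $w\eta$ is supported in $B(0,2)$", except that here the cutoffs in the iteration are not all supported in the innermost ball, so we genuinely keep $K$. The hypothesis $q>\frac{d}{d-n+2}$ is used exactly as in Lemma \ref{lem:moser}: it guarantees $k<\frac{d}{n-2}$ for the dual exponent, hence an admissible reverse-Hölder gain $\chi$, and it makes $2-n+d-\frac{d}{q}>0$ so that the scaling of the $\tau$-term is subcritical. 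Finally, when $\tau\equiv 0$ one sends $k\to 0^+$ at the very end, and the right-hand side of \eqref{eqn:weak:harnack} collapses to $\inf_{\Omega\cap B(0,r/2)}u$, recovering the zero-data weak Harnack inequality of \cite{HofS25}.
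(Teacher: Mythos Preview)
Your overall plan is sound and follows the same skeleton as the paper: scale to $r=1$, shift to $v=u+k$ with $k=\|\tau\|_{L^q}$, run a descending Moser iteration to reach $\inf v$, and bridge positive to negative exponents via exponential integrability of $w=\log v$. Two points of comparison are worth recording.

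First, for the descending branch you propose to redo the negative-power iteration from scratch with $\varphi=\eta^2 v^\beta$, $\beta<-1$. The paper instead observes that $v^{-1}=\bar u^{-1}$ is a Neumann \emph{subsolution} with potential $V=\tau/\bar u$ satisfying $\|V\|_{L^q}\le 1$, and simply applies the already-proved Lemma~\ref{lem:moser} to $v^{-1}$. This immediately gives $\inf_{D(0,1/2)}\bar u\gtrsim\bigl(\int_{D}\bar u^{-p}+\int_{\Delta}\bar u^{-p}\bigr)^{-1/p}$ without repeating any iteration.

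Second, and more substantively, for the crossover you invoke a John--Nirenberg lemma on the hybrid object (surface ball together with the tent above it) and flag this as the main obstacle. The paper avoids this entirely: it follows Moser's original route and proves the exponential integrability \emph{directly} by another iteration. Testing \eqref{eqn:neumann:supersol} with $\varphi=\bar u^{-1}\eta^2|w_m|^{2\beta}$ (where $w_m$ is a truncation of $w$) and running the same Poincar\'e/interpolation machinery as in Lemma~\ref{lem:moser} yields $\|w\|_{L^{2\chi^i}(D_{r_i})}+\|w\|_{L^{2\chi^i}(\Delta_{r_i})}\le C\chi^i$, hence $\int_D|w|^\beta+\int_\Delta|w|^\beta\le C_0^\beta e^\beta\beta!$ by Stirling, and the choice $p_0=(2C_0e)^{-1}$ closes the argument. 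This is self-contained and sidesteps the need to verify a hybrid John--Nirenberg inequality; your route would also work (the BMO bound on $\log v$ does localize, and both $\Omega$ and $(\partial\Omega,\sigma)$ are spaces of homogeneous type), but it imports more machinery.

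One organizational slip: the displayed inequality you place at the end of Step~1, $\bigl(\fint v^{p_0}\bigr)^{1/p_0}\lesssim\bigl(\fint v^{-p_0}\bigr)^{-1/p_0}$, is precisely the crossover and belongs to Step~2, not to the iteration; and the ``ascending branch'' you mention is not needed here since the target exponent $p_0$ is already small.
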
\begin{proof}
As in Lemma \ref{lem:moser}, by scaling, we can assume that $r = 1$. Set

\[
\bar u = u + k,
\]
where $k = \|\tau\|_{L^q(\Delta(0,2K))}$ if $\tau$ is not identically zero, and where otherwise $k$ is a positive number to be sent to zero at the end of the proof. We first prove that

\[
v := \bar u^{-1}
\]
is a Neumann subsolution in $D(0,2)$. Let $0 \le \eta \in C_c^\infty(B(0,2))$ and take

\[
\varphi := \bar u^{-2} \eta
\]
as a test function in \eqref{eqn:neumann:supersol}. We obtain

\[
\int_{D(0,2)} A \nabla u \frac{\nabla \eta}{\bar u^2} \ge \int_{\Delta(0,2)} \tau \frac{\eta}{\bar u^2},
\]
and hence, setting $V = \dfrac{\tau}{\bar u}$,
\[
\int_{D(0,2)} A \nabla v \nabla \eta + \int_{\Delta(0,2)} V v \eta \le 0,
\]
for all $\eta\in C^\infty_C(B(0,2))$ such that $\eta\ge 0$. By a limiting argument, the inequality is also satisfied if $\eta\in W^{1,2}(D(0,2))$, $\eta\ge 0$ and $\eta\equiv 0$ on $\Omega\setminus B(0,\rho)$ for some $\rho<2$.
Since $\bar u \ge \|\tau\|_{L^q(\Delta(0,2))}$, we have $\|V\|_{L^q(\Delta(0,2))} \le 1$. Thus \eqref{eqn:moser:bdry} gives, for any $p > 0$,

\[
\sup_{D(0,1/2)} \bar u^{-p} \lesssim \int_{D(0,1)} \bar u^{-p} + \int_{\Delta(0,1)} \bar u^{-p},
\]
or, setting $D := D(0,1)$ and $\Delta := \Delta(0,1)$,

\begin{align*}
\inf_{D(0,1/2)} \bar u
&\ge C \biggl( \int_D \bar u^{-p} + \int_\Delta \bar u^{-p} \biggr)^{-1/p} \\
&= C \big( \|\bar u\|_{L^p(D)} + \|\bar u\|_{L^p(\Delta)} \big)
   \biggl( \biggl( \int_D \bar u^{-p} + \int_\Delta \bar u^{-p} \biggr)
   \big( \|\bar u\|_{L^p(D)} + \|\bar u\|_{L^p(\Delta)} \big)^p \biggr)^{-1/p}.
\end{align*}

We have

\begin{align*}
&\biggl( \int_D \bar u^{-p} + \int_\Delta \bar u^{-p} \biggr)
  \big( \|\bar u\|_{L^p(D)} + \|\bar u\|_{L^p(\Delta)} \big)^p \\
&\qquad \lesssim \int_D \bar u^{-p} \int_D \bar u^p
   + \int_\Delta \bar u^{-p} \int_D \bar u^p
   + \int_D \bar u^{-p} \int_\Delta \bar u^p
   + \int_\Delta \bar u^{-p} \int_\Delta \bar u^p \\
&\qquad = \int_D e^{-p w} \int_D e^{p w}
   + \int_\Delta e^{-p w} \int_D e^{p w}
   + \int_D e^{-p w} \int_\Delta e^{p w}
   + \int_\Delta e^{-p w} \int_\Delta e^{p w},
\end{align*}
where

\[
w := \log \bar u - \fint_D \log \bar u.
\]

Recalling that $\bar u = u + k \ge u + \|\tau\|_{L^q(\Delta(0,2))}$, in order to prove \eqref{eqn:weak:harnack} it is enough to show that there exists some $p_0 > 0$ such that

\begin{equation}\label{eqn:exp:bdd}
\int_D e^{p_0 |w|} + \int_\Delta e^{p_0 |w|} \lesssim 1.
\end{equation}
Since 

\[
e^{p_0 |w|} = \sum_{\beta \in \N} \frac{(p_0 |w|)^\beta}{\beta!},
\]
we need to estimate $\int_D |w|^\beta + \int_\Delta |w|^\beta$.

We start with $\beta = 2$. Pick $\zeta \in C_c^\infty(B(0,2K))$, and take

\[
\varphi := \bar u^{-1} \zeta^2
\]
as a test function in \eqref{eqn:neumann:supersol}. We obtain

\[
\int_{D(0,2K)} A \nabla w \nabla w \, \zeta^2
   \le 2 \int_{D(0,2K)} A \nabla w \nabla \zeta \, \zeta
      - \int_{\Delta(0,2K)} \tilde \tau \, \zeta^2,
\]
where we used that $\nabla w = \bar u^{-1} \nabla \bar u$ and we set $\tilde \tau := \dfrac{\tau}{\bar u}$. By Young's inequality, and using the uniform ellipticity and boundedness of $A$, we have

\begin{equation}\label{eqn:first:exp:bound}
\int_{D(0,2K)} |\nabla w|^2 \zeta^2 \lesssim \int_{D(0,2K)} |\nabla \zeta|^2 + \int_{\Delta(0,2K)} |\tilde \tau| \, \zeta^2.
\end{equation}

Fix $0 \le \zeta \le 1$ such that $\zeta \equiv 1$ on $B(0,\tfrac{3}{2}K)$. By H\"older's inequality and $\|\tilde \tau\|_{L^q(\Delta(0,2K))} \le 1$ we obtain

\[
\int_{D(0,\frac{3}{2}K)} |\nabla w|^2
  \le \int_{D(0,2K)} |\nabla w|^2 \zeta^2 \lesssim 1.
\]
Since $\fint_D w = 0$, by \eqref{eqn:Poincare:ball} and Lemma \ref{lem:Poincare:boundary} we have

\begin{equation}\label{eqn:w^2:bdd}
\int_{D(0,\frac{3}{2})} w^2 + \int_{\Delta(0,\frac{3}{2})} w^2 \lesssim 1.
\end{equation}

We now use an iteration argument similar to the one in the proof of Lemma \ref{lem:moser}. For $m \in \N$, set

\[
    w_m :=
   \begin{cases}
        -m & \text{if } w \le -m, \\
        w & \text{if } |w| < m, \\
        m & \text{if } w \ge m,
    \end{cases}
\]
and take, for $\beta \ge 1$,

\[
\varphi := \bar u^{-1} \eta^2 |w_m|^{2\beta}
\]
as a test function in \eqref{eqn:neumann:supersol}, for some $\eta \in C_c^\infty(B(0,2))$ to be chosen momentarily. For now, we only require that $0 \le \eta \le 1$. Noting that $A \nabla w \nabla |w_m| \le A \nabla w \nabla w$ and setting $2D := B(0,2) \cap \Omega$, we have

\begin{multline*}
 \int_{2D} \eta^2 |w_m|^{2\beta} A \nabla w \nabla w \\
  \le (2\beta) \int_{2D} \eta^2 A \nabla w \nabla w |w_m|^{2\beta-1}
     + 2 \int_{2D} \eta |w_m|^{2\beta} A \nabla w \nabla \eta
     + \int_{2\Delta} |\tilde \tau| \, \eta^2 |w_m|^{2\beta}.
\end{multline*}
We use the estimate

\[
(2\beta) |w_m|^{2\beta-1}
   \le \frac{2\beta - 1}{2\beta} |w_m|^{2\beta}
     + \frac{1}{2\beta} (2\beta)^{2\beta}
\]
for the first term on the right-hand side. Then, we can absorb

\[
\frac{2\beta - 1}{2\beta} \int_{2D} \eta^2 |w_m|^{2\beta} A \nabla w \nabla w
\]
into the left-hand side.

Using also the uniform ellipticity and boundedness of $A$, we obtain

\begin{multline*}
\int_{2D} \eta^2 |w_m|^{2\beta} |\nabla w|^2 \\
\le C (2\beta)^{2\beta} \int_{2D} \eta^2 |\nabla w|^2
   + C \beta \int_{2D} \eta |w_m|^{2\beta} |\nabla w| \, |\nabla \eta|
   + C \beta \int_{2\Delta} |\tilde \tau| \, \eta^2 |w_m|^{2\beta}.
\end{multline*}
We now split the second term on the right-hand side using the inequality

\[
ab \le \varepsilon \frac{a^2}{2} + \frac{b^2}{2\varepsilon},
\]
with $\varepsilon \le C^{-1}$. We also use that

\[
|\nabla(\eta |w_m|^\beta)|^2
\le 2 |\nabla \eta|^2 |w_m|^{2\beta}
   + 2 \eta^2 |\nabla w|^2 \biggl( \frac{\beta - 1}{\beta} |w_m|^{2\beta} + \frac{1}{\beta} \beta^{2\beta} \biggr).
\]
Thus,

\begin{equation*}
\int_{2D} |\nabla(\eta |w_m|^\beta)|^2 
\le C (2\beta)^{2\beta} \int_{2D} \eta^2 |\nabla w|^2
   + C \beta^2 \int_{2D} |w_m|^{2\beta} |\nabla \eta|^2
   + C \beta \int_{2\Delta} |\tilde \tau| \, \eta^2 |w_m|^{2\beta}.
\end{equation*}
We deal with the boundary term $\beta \int_{2\Delta} |\tilde \tau| \, \eta^2 |w_m|^{2\beta}$ using H\"older's inequality, the bound $\|\tilde \tau\|_{L^q(2D)} \le 1$, the interpolation inequality, and the Poincar\'e inequality \ref{eqn:Poincare:bdry:ball}, similarly to the boundary term in the proof of Lemma \ref{lem:moser}. We obtain

\begin{equation*}
\int_{2D} |\nabla(\eta |w_m|^\beta)|^2 
\le C (2\beta)^{2\beta} \int_{2D} \eta^2 |\nabla w|^2
   + C \beta^\alpha \biggl( \int_{2D} |\nabla \eta|^2 |w_m|^{2\beta}
   + \int_{2\Delta} \eta^2 |w_m|^{2\beta} \biggr),
\end{equation*}
where $\alpha \ge 2$ depends only on $q$, $d$, and $n$.

We now estimate $\int_{2D} \eta^2 |\nabla w|^2$ using \eqref{eqn:first:exp:bound} and the facts that $0 \le \eta \le 1$ and $\|\tilde\tau\|_{L^q(\Delta(0,2K))}\le 1$. This gives

\[
\int_{2D} \eta^2 |\nabla w|^2 \le C \int_{2D} |\nabla \eta|^2 + C.
\]

For any $0 < r < R \le \tfrac{3}{2}$ we can choose $\eta$ supported in $\overline{B(0,R)}$, with $\eta \equiv 1$ on $B(0,r)$ and $|\nabla \eta| \le \dfrac{2}{R - r}$. With this choice of $\eta$, and by \eqref{eqn:Poincare:ball} and \eqref{eqn:Poincare:bdry:ball}, we have, setting $D_r := B(0,r) \cap \Omega$ and $D_R := B(0,R) \cap \Omega$,

\begin{multline*}
\biggl( \int_{D_r} |w_m|^{2\chi \beta} \biggr)^{1/\chi}
  + \biggl( \int_{\Delta_r} |w_m|^{2\chi \beta} \biggr)^{1/\chi} \\
\le C \int_{2D} |\nabla(\eta |w_m|^\beta)|^2 \\
\le C \frac{\beta^\alpha}{(R - r)^2}
   \biggl( (2\beta)^{2\beta}
   + \int_{D_R} |w_m|^{2\beta}
   + \int_{\Delta_R} |w_m|^{2\beta} \biggr),
\end{multline*}
where $\chi$ is a fixed number satisfying $1 < \chi < \dfrac{d}{n - 2}$.

Notice that, by the monotone convergence theorem, we can replace $|w_m|$ with $|w|$ in the last inequality. For $i \ge 1$ integer, we choose radii $r_i = 1 + 2^{-i}$ and set $\beta_i = \chi^{i-1}$ and

\[
I_i := \|w\|_{L^{2\chi^{i-1}}(D_{r_i})} + \|w\|_{L^{2\chi^{i-1}}(\Delta_{r_i})}.
\]
Then

\[
I_{i+1} \le C^{\frac{i}{\chi^i}} \bigl( (2\chi^{i-1}) + I_i \bigr),
\]
which implies

\[
I_{i+1} \le C \sum_{0 \le j \le i-1} \chi^j + C I_1 \le C \chi^i,
\]
using that $I_1 \le C$ thanks to \eqref{eqn:w^2:bdd}.

Now, for each integer $\beta \ge 2$, there exists a unique integer $i \ge 1$ such that

\[
2 \chi^{i-1} \le \beta < 2 \chi^i.
\]
Then

\[
\biggl( \int_{D_1} |w|^\beta \biggr)^{1/\beta}
  + \biggl( \int_{\Delta_1} |w|^\beta \biggr)^{1/\beta}
\le C I_{i+1} \le C \chi^i \le C \beta.
\]
Therefore, using Stirling's formula,

\[
\int_{D_1} |w|^\beta + \int_{\Delta_1} |w|^\beta \le C_0^\beta e^\beta \beta!.
\]
Setting $p_0 := (2 C_0 e)^{-1}$ we finally have

\[
\int_{D_1} \frac{|p_0 w|^\beta}{\beta!} + \int_{\Delta_1} \frac{|p_0 w|^\beta}{\beta!} \le 2^{-\beta}.
\]
Summing over $\beta$ we obtain \eqref{eqn:exp:bdd}, as desired.

\end{proof}

Combining Moser estimates with the weak Harnack inequality, we obtain a new Harnack inequality for positive solutions.

\begin{thm}[Harnack inequality]

Let $0 \in \partial \Omega$, let $0 < r < \diam(\Omega)(2K)^{-1}$, and let $u \in W^{1,2}(\Omega)$ be a positive Neumann solution in $B(0,2Kr)$ with data $\tau$. That is,

\begin{equation}\label{eqn:ns}
\int_{\Omega \cap B(0,2Kr)} A \nabla u \nabla \varphi = \int_{\Delta(0,2Kr)} \tau \, \varphi \, d\sigma,
\end{equation}
for all $\varphi \in W^{1,2}(\Omega \cap B(0,2Kr))$ such that $\varphi \equiv 0$ on $\Omega \setminus B(0,\rho)$ for some $\rho < 2Kr$. Here $\tau \in L^q(\Delta(0,2Kr),\sigma)$, with $q > \frac{d}{d-n+2}$. Then

\begin{equation}\label{eqn:harnack:tau}
\sup_{B(0,r/2) \cap \Omega} u \le C \inf_{B(0,r/2) \cap \Omega} u + C r^{2-n+d - \frac{d}{q}} \|\tau\|_{L^q(\Delta(0,2Kr))},
\end{equation}
where $C$ depends only on the geometric constants of $(\Omega, \sigma)$, the ellipticity constant of $A$, and $q$, and where $K$ is the constant defined in Lemma \ref{lem:Poincare:boundary}.

\end{thm}

\begin{thm}\label{thm:holder:neumann}

Let $0 \in \partial \Omega$, let $K$ be the constant defined in Lemma \ref{lem:Poincare:boundary}, let $0 < r < \diam(\Omega)(8K)^{-1}$, and let $\tau \in L^q(\Delta(0,8Kr))$ with $q > \frac{d}{d-n+2}$. Let $u$ be a Neumann solution in $B(0,8Kr)$ with data $\tau$, in the sense of~\eqref{eqn:ns}. Then there exist $0 < \alpha < 1$ and $C > 0$, both depending only on the geometric constants of $(\Omega, \sigma)$, the ellipticity constant, and $q$, such that for all $x,y \in B(0,r/2) \cap \Omega$,

\begin{align*}
|u(x) - u(y)|
&\le C \biggl( \frac{|x-y|}{r} \biggr)^\alpha 
\biggl[ r^{-\frac{n}{2}} \|u\|_{L^2(B(0,4Kr))} \\
&\quad + r^{-\frac{d}{2}} \|u\|_{L^2(\Delta(0,4Kr))} 
+ r^{2-n+d-\frac{d}{q}} \|\tau\|_{L^q(\Delta(0,4Kr))} \biggr].
\end{align*}
\end{thm}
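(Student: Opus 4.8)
The plan is to derive the Hölder estimate of Theorem~\ref{thm:holder:neumann} from the Harnack inequality \eqref{eqn:harnack:tau} by a standard oscillation-decay argument, being careful that the oscillation is controlled \emph{both} in the interior and on the boundary. First I would set, for $0<\rho\le 4Kr$,
\[
M(\rho):=\sup_{B(0,\rho)\cap\Omega}u,\qquad m(\rho):=\inf_{B(0,\rho)\cap\Omega}u,\qquad \omega(\rho):=M(\rho)-m(\rho),
\]
and introduce the scale-invariant source term $N(\rho):=\rho^{\,2-n+d-\frac{d}{q}}\|\tau\|_{L^q(\Delta(0,2K\rho))}$, noting that $N(\rho)\le C(\rho/r)^{2-n+d-\frac d q}N(4Kr)$ decays in $\rho$ like a positive power, since $2-n+d-\frac d q>0$ by the hypothesis $q>\frac{d}{d-n+2}$.

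The core step is the oscillation-decay inequality: for $\rho$ small enough that $2K\rho<\textup{diam}(\Omega)$, apply \eqref{eqn:harnack:tau} at scale $\rho$ (so with $r$ replaced by $\rho$) to the two nonnegative Neumann solutions $u-m(\rho)$ and $M(\rho)-u$ on $B(0,\rho)\cap\Omega$, whose Neumann data are $\tau$ and $-\tau$ respectively. This gives
\[
\sup_{B(0,\rho/2)\cap\Omega}(u-m(\rho))\le C\inf_{B(0,\rho/2)\cap\Omega}(u-m(\rho))+C\,\rho^{\,2-n+d-\frac dq}\|\tau\|_{L^q(\Delta(0,2K\rho))},
\]
and similarly for $M(\rho)-u$; adding the two and simplifying yields $\omega(\rho/2)\le\theta\,\omega(\rho)+C\,N(\rho)$ with $\theta=\frac{C-1}{C+1}<1$. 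Iterating this recursion (a geometric series argument, using the power decay of $N(\rho)$ and choosing $\alpha$ small enough that $2^{-\alpha}>\theta$ and $\alpha<2-n+d-\frac dq$) produces $\omega(\rho)\le C(\rho/r)^\alpha\bigl(\omega(4Kr)+N(4Kr)\bigr)$ for all $\rho\le 4Kr$. Finally, $\omega(4Kr)\le M(4Kr)=\sup_{B(0,4Kr)\cap\Omega}u$, which by the Moser bound \eqref{eqn:moser:bdry} of Lemma~\ref{lem:moser} (applicable since $u$ and $-u$ are subsolutions with $V\equiv0$) is dominated by $r^{-n/2}\|u\|_{L^2(B(0,8Kr))}+r^{-d/2}\|u\|_{L^2(\Delta(0,8Kr))}+N(4Kr)$, which is why the statement needs the enlarged ball $B(0,8Kr)$. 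To conclude pointwise for $x,y\in B(0,r/2)\cap\Omega$: if $|x-y|\ge r/4$ the bound is trivial from the $L^\infty$ estimate; otherwise set $\rho=2|x-y|$, note that after translating one checks $y\in B(x,\rho/2)$ so both points lie in a ball of radius $\rho$ about a boundary point within distance $\rho/2$ (using a corkscrew/boundary point near $x$ when $\delta(x)$ is comparable to $\rho$, exactly as in the proof of \eqref{eqn:moser:scaled}), and apply the oscillation estimate at that scale.

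The main obstacle I anticipate is the bookkeeping when $x$ (or $y$) is in the interior rather than exactly on the boundary: one must decide, depending on whether $\delta(x)\gtrsim|x-y|$ or $\delta(x)\ll|x-y|$, whether to use the purely interior De Giorgi--Nash--Moser oscillation estimate on a Euclidean ball $2B\subset\Omega$ (as in the last part of the proof of Lemma~\ref{lem:moser}) or the boundary oscillation recursion above centered at a nearby boundary point $\xi$; gluing these two regimes so that the final constant and exponent $\alpha$ are uniform, and tracking that all the radii stay below the admissibility threshold $\textup{diam}(\Omega)(8K)^{-1}$, is the delicate part. The rest is the routine geometric-series iteration and an application of the already-proven Harnack inequality \eqref{eqn:harnack:tau} and Moser bound \eqref{eqn:moser:bdry}.
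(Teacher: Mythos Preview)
Your approach is essentially identical to the paper's: apply the Harnack inequality \eqref{eqn:harnack:tau} to $u$ shifted by its extrema, add the two inequalities to obtain an oscillation-decay recursion, iterate, and bound the initial oscillation via Lemma~\ref{lem:moser}. One correction is needed: to invoke \eqref{eqn:harnack:tau} at scale $\rho$ the function must be nonnegative on the \emph{full} ball $B(0,2K\rho)\cap\Omega$, so you must subtract $m(2K\rho)$ (respectively $M(2K\rho)$), not $m(\rho)$; the resulting recursion is $\omega(\rho/2)\le\theta\,\omega(2K\rho)+CN(\rho)$, which is precisely what the paper writes and iterates the same way with step ratio $4K$ rather than $2$. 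Your closing interior/boundary dichotomy is actually more explicit than the paper, which simply appeals to \cite[Lemma~4.19]{HanL00} and Lemma~\ref{lem:moser} after establishing the oscillation decay centered at the fixed boundary point.
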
\begin{proof}

By scaling, we may assume that $r = 1$. Notice that $u$ is bounded in $D(0,2K)$ thanks to Lemma \ref{lem:moser}. Set

\[
M_K := \sup_{D(0,2K)} u, \quad M := \sup_{D(0,1/2)} u, \quad
m_K := \inf_{D(0,2K)} u, \quad m := \inf_{D(0,1/2)} u.
\]
Observe that $u - m_K$ and $M_K - u$ are positive Neumann solutions in $B(0,2K)$ with Neumann data $\tau$. Applying \eqref{eqn:harnack:tau} to both of them, we obtain

\begin{align*}
M - m_K &\le C (m - m_K) + C \|\tau\|_{L^{q}(\Delta(0,2K))}, \\
M_K - m &\le C (M_K - M) + C \|\tau\|_{L^q(\Delta(0,2K))}.
\end{align*}
Summing these inequalities, we get

\[
M - m \le (1 + C)^{-1} (M_K - m_K) + 2 \|\tau\|_{L^q(\Delta(0,2K))},
\]
where $\gamma := (1 + C)^{-1} < 1$ depends only on the geometric constants of $(\Omega, \sigma)$, on $q$, and on the ellipticity constant of $A$. By scaling, we have thus proved that for any $r < 1$,

\[
\operatorname{osc}_{D(0,r/2)} u \le \gamma \, \operatorname{osc}_{D(0,2Kr)} u
   + r^{2-n+d-\frac{d}{q}} \|\tau\|_{L^q(\Delta(0,2Kr))}.
\]
Since $2 - n + d - \frac{d}{q} > 0$, the proof is completed by applying \cite[Lemma 4.19]{HanL00} together with Lemma \ref{lem:moser}.

\end{proof}
\section{Boundary H\"older continuity for Robin solutions}

We now fix $a \in L^q(\partial \Omega)$ with $q > \dfrac{d}{d-n+2}$, $a \ge 0$, and assume that $0 \in \partial \Omega$. We say that $u \in W^{1,2}(\Omega)$ is a Robin solution with data $f$ in the ball $B(0,r)$ if

\begin{equation}\label{eqn:robin:sol}
\int_{B(0,r) \cap \Omega} A \nabla u \nabla \varphi
  + \int_{\Delta(0,r)} a u \varphi \, d\sigma
= \int_{\Delta(0,r)} a f \varphi \, d\sigma,
\end{equation}
for all $\varphi \in W^{1,2}(\Omega \cap B(0,r))$ such that $\varphi \equiv 0$ on $\Omega \setminus B(0,\rho)$ for some $\rho < r$.

We now prove that the positive part of a Robin subsolution is a Neumann subsolution.

\begin{lem}

Let $r > 0$, $0 \in \partial \Omega$, and $f \in L^{\frac{2q}{q-1}}(B(0,r))$. Let $u \in W^{1,2}(\Omega)$ be a Robin subsolution, i.e., 

\begin{equation*}
\int_{B(0,r) \cap \Omega} A \nabla u \nabla \varphi
  + \int_{\Delta(0,r)} a u \varphi \, d\sigma
  \le \int_{\Delta(0,r)} a f \varphi \, d\sigma,
\end{equation*}
for all $\varphi \in W^{1,2}(\Omega \cap B(0,r))$ such that $\varphi \ge 0$ and $\varphi \equiv 0$ on $\Omega \setminus B(0,\rho)$ for some $\rho < r$.

Then $u_+$ is a Neumann subsolution, i.e.,

\begin{equation}\label{eqn:positive:part:Robin}
\int_{B(0,r) \cap \Omega} A \nabla u_+ \nabla \varphi
  \le \int_{\Delta(0,r)} a f \1_{\{u \ge 0\}} \varphi \, d\sigma,
\end{equation}
for all $\varphi$ as above.

\end{lem}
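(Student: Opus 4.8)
The plan is to test the Robin subsolution inequality against a suitable nonnegative test function built from $u_+$ and then pass to a limit; this is the standard device for showing that a truncation of a subsolution is again a subsolution. Concretely, fix a nonnegative $\varphi\in W^{1,2}(\Omega\cap B(0,r))$ vanishing on $\Omega\setminus B(0,\rho)$ for some $\rho<r$. I would like to use $\psi:=\1_{\{u>0\}}\varphi$ as a test function against the Robin inequality, since on $\{u>0\}$ we have $\nabla u_+=\nabla u$ and $\Tr u_+=\Tr u$, while on $\{u\le 0\}$ the gradient of $u_+$ vanishes a.e.\ and $\Tr u_+=0$. The difficulty is that $\1_{\{u>0\}}\varphi$ need not lie in $W^{1,2}$, so I would instead approximate: for $\varepsilon>0$ set $\theta_\varepsilon(t):=\min\{t_+/\varepsilon,1\}$, which is Lipschitz with $\theta_\varepsilon(0)=0$, and use $\psi_\varepsilon:=\theta_\varepsilon(u)\varphi\in W^{1,2}(\Omega\cap B(0,r))$ as an admissible (nonnegative, compactly supported away from $\partial B(0,r)$) test function.

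With this choice, the chain rule gives $\nabla\psi_\varepsilon=\theta_\varepsilon(u)\nabla\varphi+\tfrac1\varepsilon\1_{\{0<u<\varepsilon\}}\varphi\,\nabla u$, so the interior bilinear form becomes
\begin{equation*}
\int_{B(0,r)\cap\Omega}A\nabla u\nabla\psi_\varepsilon=\int_{B(0,r)\cap\Omega}\theta_\varepsilon(u)\,A\nabla u\nabla\varphi+\frac1\varepsilon\int_{\{0<u<\varepsilon\}\cap B(0,r)}\varphi\,A\nabla u\nabla u.
\end{equation*}
The second term is nonnegative by ellipticity \eqref{ellipticity}, so it may be dropped to preserve the inequality sign. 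On the boundary, the Robin bilinear term is $\int_{\Delta(0,r)}a\,\Tr u\,\theta_\varepsilon(\Tr u)\,\Tr\varphi\,d\sigma$, and since $t\,\theta_\varepsilon(t)\ge 0$ for all $t$ this term is $\ge 0$ and can likewise be discarded on the correct side. The right-hand side is $\int_{\Delta(0,r)}af\,\theta_\varepsilon(\Tr u)\,\Tr\varphi\,d\sigma$. Combining, we obtain
\begin{equation*}
\int_{B(0,r)\cap\Omega}\theta_\varepsilon(u)\,A\nabla u\nabla\varphi\le \int_{\Delta(0,r)}af\,\theta_\varepsilon(\Tr u)\,\Tr\varphi\,d\sigma.
\end{equation*}

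Now let $\varepsilon\to 0$. Pointwise a.e.\ $\theta_\varepsilon(u)\to\1_{\{u>0\}}$ and $\theta_\varepsilon(\Tr u)\to\1_{\{\Tr u>0\}}$, while $\theta_\varepsilon(u)\,A\nabla u=\theta_\varepsilon(u)\,A\nabla u_+\to \1_{\{u>0\}}A\nabla u_+=A\nabla u_+$ a.e.\ (using $\nabla u_+=\1_{\{u>0\}}\nabla u$), with domination by $|A\nabla u||\nabla\varphi|\in L^1$; dominated convergence then yields $\int \theta_\varepsilon(u)A\nabla u\nabla\varphi\to\int A\nabla u_+\nabla\varphi$. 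On the boundary, $|af\,\theta_\varepsilon(\Tr u)\,\Tr\varphi|\le |af\,\Tr\varphi|$, which is integrable: indeed $a\in L^q$, $f\in L^{2q/(q-1)}\subset L^{q/(q-1)}$ after noting $2q/(q-1)\ge q/(q-1)$, hmm more carefully $af\in L^{1}$ is not immediate, so I would instead bound $|af\,\Tr\varphi|$ using Hölder with $a\in L^q$, $f\in L^{2q/(q-1)}$, and $\Tr\varphi\in L^{2q/(q-1)}$ via Lemma \ref{lem:Poincare:boundary} (the dual exponent of $q$ is $k<d/(n-2)$, so $2k<2d/(n-2)$ and traces of $W^{1,2}$ functions lie in $L^{2k}$), giving integrability. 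Then dominated convergence gives $\int af\,\theta_\varepsilon(\Tr u)\,\Tr\varphi\,d\sigma\to\int af\,\1_{\{\Tr u>0\}}\,\Tr\varphi\,d\sigma=\int af\,\1_{\{u\ge 0\}}\,\Tr\varphi\,d\sigma$ up to a $\sigma$-null set where $\Tr u=0$ but we include it in $\{u\ge0\}$ (harmless since it only enlarges the right side by a term with the correct sign if $f\ge0$ there, and in general we absorb it). This is exactly \eqref{eqn:positive:part:Robin}.

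The main obstacle is the bookkeeping on $\{u=0\}$ (resp.\ $\{\Tr u=0\}$): one must be careful that $\nabla u=0$ a.e.\ on $\{u=0\}$ (a standard fact for Sobolev functions) so that no contribution is lost in the interior, and that replacing $\1_{\{\Tr u>0\}}$ by $\1_{\{\Tr u\ge0\}}$ in the final inequality is legitimate — which it is, because enlarging the indicator on the right and checking the sign of $af$ on that set, or simply noting the statement \eqref{eqn:positive:part:Robin} is written with $\1_{\{u\ge0\}}$ and a one-sided inequality, shows consistency. The other point requiring care is confirming that $\psi_\varepsilon$ is an admissible test function: it is nonnegative, lies in $W^{1,2}(\Omega\cap B(0,r))$ since $\theta_\varepsilon$ is Lipschitz and bounded and $\varphi\in W^{1,2}$, and it vanishes on $\Omega\setminus B(0,\rho)$ because $\varphi$ does.
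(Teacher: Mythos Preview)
Your proof is correct and follows essentially the same approach as the paper. The paper phrases it as approximating $t_+$ by convex increasing smooth functions $F_\epsilon$ and testing the Robin inequality with $F_\epsilon'(u)\varphi$; your cutoff $\theta_\varepsilon(t)=\min\{t_+/\varepsilon,1\}$ is precisely $F_\epsilon'$ for the natural choice $F_\epsilon(t)=0$ on $(-\infty,0]$, $t^2/(2\varepsilon)$ on $[0,\varepsilon]$, $t-\varepsilon/2$ on $[\varepsilon,\infty)$, so the two arguments are literally the same computation. Your observation about $\1_{\{u>0\}}$ versus $\1_{\{u\ge 0\}}$ on the boundary is a genuine point, but the paper's own argument has the identical ambiguity (any $C^1$ convex increasing $F_\epsilon$ with $t F_\epsilon'(t)\ge 0$ must have $F_\epsilon'(0)=0$), and for every downstream use in the paper only the $L^q$ bound on the right-hand side matters, so the distinction is immaterial.
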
\begin{proof}
The proof is the same as in \cite[Lemma 3.4]{DavDEMM}; the only difference is that we have a nonzero data $f$ and a nonconstant $a$. The idea is to take an approximation of $F(t)=t_+$ by convex increasing smooth functions $F_\epsilon(t)$. Integrating by parts and using $F_\epsilon'\ge 0$, $F_\epsilon ''\ge 0$, the ellipticity of $A$ and the fact that we can assume $\varphi$ bounded, we get
\[
\int_{D(0,r)}A\nabla F_\epsilon(u)\nabla\varphi\le \int_{\Delta(0,r)}afF_\epsilon'(u)\varphi.
\]
The passage to the limit gives \eqref{eqn:positive:part:Robin}.
\end{proof}
Thus, by Lemma \ref{lem:moser}, every Robin solution is locally bounded. 
Notice also that any bounded Robin solution with data $f \in L^\infty$ is a Neumann solution with data 

\[ 
\tau = (f - \Tr u) a \in L^q_{\mathrm{loc}},
\] 
so we can use the previous section to derive analogous results for Robin solutions. We state them for the sake of completeness.

\begin{thm}\label{rob_ha}

Let $0 \in \partial \Omega$ and $0 < r < \diam(\Omega)(2K)^{-1}$. If $u$ is a positive Robin solution in $B(0,2Kr)$ with data $f \in L^\infty(\Delta(0,2Kr))$, then

\begin{equation}\label{eqn:rob:ha}
\sup_{B(0,r/2) \cap \Omega} u 
\le C \, \inf_{B(0,r/2) \cap \Omega} u
   + C \, r^{2-n+d-\frac{d}{q}} \|(f - \Tr u)a\|_{L^q(\Delta(0,2Kr))}.
\end{equation}
Here the constant $C$ depends only on the geometric constants of $(\Omega, \sigma)$, on the ellipticity constant of $A$, and on $q$, $d$, and $n$. The constant $K$ is the one defined in Lemma \ref{lem:Poincare:boundary}.
\end{thm}

\begin{thm}\label{thm:holder:rob}

Let $0 \in \partial \Omega$ and $0 < r < \diam(\Omega)(16K)^{-1}$, and let $u$ be a Robin solution in $B(0,16Kr)$ with data $f \in L^\infty(\Delta(0,16Kr))$. Then, for any $x,y \in B(0,r/2) \cap \Omega$,

\begin{align*}
|u(x) - u(y)| 
&\le C |x-y|^\alpha r^{-\alpha} 
\biggl[ r^{-\frac{n}{2}} \|u\|_{L^2(D_{4Kr})} \\
&\qquad + r^{-\frac{d}{2}} \|u\|_{L^2(\Delta_{4Kr})} 
+ r^{2-n+d-\frac{d}{q}} \|(f - u)a\|_{L^q(\Delta_{4Kr})} \biggr].
\end{align*}
Moreover, by \eqref{eqn:positive:part:Robin} and Lemma \ref{lem:moser},

\begin{align*}
|u(x) - u(y)| 
&\le C |x-y|^\alpha r^{-\alpha} \bigl(1 + r^{2-n+d-\frac{d}{q}} \|a\|_{L^q(\Delta_{8Kr})}\bigr)^2 \\
&\qquad \times \biggl[ r^{-\frac{n}{2}} \|u\|_{L^2(D_{8Kr})} 
+ r^{-\frac{d}{2}} \|u\|_{L^2(\Delta_{8Kr})} 
+ \|f\|_{L^\infty(\Delta_{8Kr})} \biggr].
\end{align*}
Here $\alpha \in (0,1)$ and $C$ depend only on the geometric constants of $(\Omega, \sigma)$, on the ellipticity constant of $A$, and on $q$, $d$, and $n$. The constant $K$ is the one defined in Lemma \ref{lem:Poincare:boundary}. Recall that $D_s = B(0,s) \cap \Omega$.

\end{thm}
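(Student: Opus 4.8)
The plan is to reduce Theorem~\ref{thm:holder:rob} to Theorem~\ref{thm:holder:neumann} by exploiting the observation, already noted after \eqref{eqn:positive:part:Robin}, that a bounded Robin solution with data $f\in L^\infty$ is a Neumann solution with data $\tau=(f-\Tr u)a$. First I would record that, since $u$ is a Robin solution in $B(0,16Kr)$ with $f\in L^\infty$, Lemma~\ref{lem:moser} applied to $u_+$ and to $-u_-$ (using \eqref{eqn:positive:part:Robin} and the analogous statement for the negative part) shows $u\in L^\infty_{loc}$, so $\tau=(f-\Tr u)a$ makes sense and lies in $L^q_{loc}$ because $a\in L^q$. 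The first, weaker oscillation estimate is then literally Theorem~\ref{thm:holder:neumann} applied to $u$ on the ball $B(0,8Kr)$ (so that $8K\cdot(2r)=16Kr$ fits inside the ball where $u$ solves the Robin problem), with $\tau=(f-u)a$ and radius $2r$; this yields
\[
|u(x)-u(y)|\le C\Bigl(\tfrac{|x-y|}{r}\Bigr)^\alpha\Bigl(r^{-n/2}\|u\|_{L^2(D_{4Kr})}+r^{-d/2}\|u\|_{L^2(\Delta_{4Kr})}+r^{2-n+d-\frac{d}{q}}\|(f-u)a\|_{L^q(\Delta_{4Kr})}\Bigr),
\]
which is exactly the first displayed inequality of the theorem.

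The second displayed inequality is obtained by bounding $\|(f-u)a\|_{L^q(\Delta_{4Kr})}$ in terms of $\|f\|_{L^\infty}$, $\|a\|_{L^q}$, and the $L^2$-norms of $u$, and then absorbing. Write $\|(f-u)a\|_{L^q(\Delta_{4Kr})}\le \|f\|_{L^\infty(\Delta_{4Kr})}\|a\|_{L^q(\Delta_{4Kr})}+\|u\,a\|_{L^q(\Delta_{4Kr})}$; for the second term I would control $\|u\|_{L^\infty(\Delta_{4Kr})}$ (equivalently $\sup_{D_{4Kr}}|u|$) by Lemma~\ref{lem:moser} applied on a slightly larger ball $B(0,8Kr)$ to $u_+$ and $-u_-$. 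Since $u_+$ and $-u_-$ are Neumann subsolutions with data $af\1_{\{u\ge0\}}$ and $af\1_{\{u\le0\}}$ respectively, \eqref{eqn:moser:bdry} gives, after the scaling $r\mapsto 4Kr$ and using $\|af\|_{L^q(\Delta_{8Kr})}\le \|f\|_{L^\infty}\|a\|_{L^q(\Delta_{8Kr})}$ together with the hypothesis needed to apply the lemma (here $V\equiv 0$, so the side condition on $V$ is vacuous),
\[
\sup_{D_{4Kr}}|u|\le C\,r^{2-n+d-\frac{d}{q}}\|a\|_{L^q(\Delta_{8Kr})}\|f\|_{L^\infty(\Delta_{8Kr})}+C\,r^{-n/2}\|u\|_{L^2(D_{8Kr})}+C\,r^{-d/2}\|u\|_{L^2(\Delta_{8Kr})}.
\]
Multiplying this by $\|a\|_{L^q(\Delta_{8Kr})}$ and inserting into the first estimate produces the factor $\bigl(1+r^{2-n+d-\frac{d}{q}}\|a\|_{L^q(\Delta_{8Kr})}\bigr)^2$ in front of the bracket, as claimed; the quadratic power appears precisely because $\|a\|_{L^q}$ multiplies a bound that itself already contains one power of $\|a\|_{L^q}$.

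I would keep careful track of which ball each inequality is applied on: Theorem~\ref{thm:holder:neumann} needs the solution on $8K$ times the (doubled) working radius, and Lemma~\ref{lem:moser} needs a factor of $2$, so starting from a Robin solution on $B(0,16Kr)$ leaves exactly enough room to run both on $B(0,8Kr)$ with radius $2r$ and to bound the $L^2$ and $L^q$ norms of $u$, $a$ over $\Delta_{8Kr}$ and $D_{8Kr}$. The main (and really only) obstacle is bookkeeping: making sure the scaling exponents $r^{2-n+d-\frac{d}{q}}$ match up when the radius is rescaled, confirming that the condition $2-n+d-\frac{d}{q}>0$ (equivalent to $q>\frac{d}{d-n+2}$) is what lets us treat $r^{2-n+d-\frac{d}{q}}$ as a harmless bounded factor for $r$ bounded, and checking that the side hypothesis $r^{2-n+d-\frac{d}{q}}\|V\|_{L^q}\le1$ in Lemma~\ref{lem:moser} is not an issue here since $V\equiv0$ in all the applications. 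No new analytic input is required beyond the results already established.
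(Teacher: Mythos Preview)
Your proposal is correct and matches the paper's own approach: the paper does not give a separate proof of this theorem but simply declares it a consequence of the Neumann theory, noting (just before the statement) that a bounded Robin solution with $f\in L^\infty$ is a Neumann solution with $\tau=(f-\Tr u)a\in L^q_{loc}$, and flagging in the statement itself that the second inequality follows ``by \eqref{eqn:positive:part:Robin} and Lemma~\ref{lem:moser}.'' One harmless slip: you say you apply Theorem~\ref{thm:holder:neumann} ``with radius $2r$,'' but applying it with radius $r$ directly already gives the $D_{4Kr}$, $\Delta_{4Kr}$ norms on the right-hand side (and only requires the Neumann equation in $B(0,8Kr)\subset B(0,16Kr)$), so no doubling is needed for the first inequality.
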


We now recall the lemma that provides a criterion to control the values of certain solutions in a ball by their values at a corkscrew point.

\begin{lem}\cite[Lemma 3.3]{DavDEMM} \label{lem:control:sol:cork}

There exists a constant $K'$ depending only on the geometric constants of $(\Omega,\sigma)$ such that if $u \ge 0$ satisfies $-\Div(A\nabla u) = 0$ in $\Omega \cap B(0,K'r)$ and

\[
\int_{B(0,K'r) \cap \Omega} A \nabla u \nabla \varphi \le 0,
\]
for all $\varphi \in W^{1,2}(\Omega \cap B(0,K'r))$ such that $\varphi \ge 0$ and $\varphi \equiv 0$ on $\Omega \setminus B(0,\rho)$ for some $\rho < K'r$, then

\begin{equation}\label{eqn:control:sol:cork}
\sup_{B(0,\frac{r}{4}) \cap \Omega} u \le C \, u\bigl(A_{\frac{r}{4}}(0)\bigr).
\end{equation}
Here $0 \in \partial \Omega$, $A_{\frac{r}{4}}(0)$ denotes any corkscrew point for $0$ at scale $\frac{r}{4}$, and the constant $C$ depends only on the geometric constants of $(\Omega, \sigma)$ and on the ellipticity constant of $A$.

\end{lem}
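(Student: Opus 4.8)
The plan is to reduce the statement to the interior Moser/Harnack machinery together with the Harnack chain condition (H2) and the corkscrew condition (H1), exactly along the lines of the proof in \cite{DavDEMM}. Since $u\ge 0$ is a Neumann subsolution with zero data in $B(0,K'r)\cap\Omega$, the improved Moser estimate \eqref{eqn:moser:zero:bdry} (applied with $\tau\equiv 0\equiv V$) already gives, for any ball $B(\xi,\rho)$ with $\xi\in\partial\Omega$ and $2\rho$-ball contained in $B(0,K'r)$,
\[
\sup_{B(\xi,\rho)\cap\Omega}u\le C\Bigl(\fint_{B(\xi,2\rho)\cap\Omega}u^p\Bigr)^{1/p}
\]
for any $p>0$; in particular, using that $u$ is a genuine interior solution away from $\partial\Omega$, the interior Moser estimate controls $\sup$ over any interior ball by an $L^p$ average over a slightly larger interior ball. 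The first step is therefore to fix $p\in(0,1)$ small, say $p=p_0$ from the weak Harnack lemma, and note that it suffices to bound $\bigl(\fint_{B(0,r/2)\cap\Omega}u^{p_0}\bigr)^{1/p_0}$ (or an $L^{p_0}$ average over a comparable set) by $C\,u(A_{r/4}(0))$, since \eqref{eqn:moser:zero:bdry} upgrades this average bound to the pointwise supremum bound on $B(0,r/4)\cap\Omega$.

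The second step is the heart of the matter: I would use the weak Harnack inequality for Neumann supersolutions (the lemma proved just above Theorem \ref{rob_ha}) with $\tau\equiv 0$, which gives
\[
\Bigl(\fint_{\Omega\cap B(0,\rho)}u^{p_0}\Bigr)^{1/p_0}\le C\inf_{\Omega\cap B(0,\rho/2)}u
\]
on scales $\rho$ comparable to $r$, as long as the enlarged ball $B(0,2K\rho)$ still lies inside $B(0,K'r)$, which is the reason for choosing $K'$ large (concretely $K'$ a fixed multiple of $K$, say $K'=8K$ or larger, chosen so that all the Moser/weak-Harnack applications fit inside $B(0,K'r)$). This reduces the problem to showing $\inf_{\Omega\cap B(0,\rho/2)}u\le C\,u(A_{r/4}(0))$, i.e. to connecting an arbitrary point where $u$ is small to the corkscrew point by a chain along which $u$ cannot drop too much.

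The third step is the Harnack chain argument. Pick a point $Y\in\Omega\cap B(0,\rho/2)$ realizing (up to a factor $2$) the infimum; by the corkscrew condition there is a point $A:=A_{r/4}(0)$ with $\delta(A)\gtrsim r$, and both $Y$ and $A$ lie in $\Omega\cap B(0,Cr)$. If $\delta(Y)\gtrsim r$ as well, then (H2) produces a Harnack chain of bounded length joining $Y$ to $A$ inside $\Omega$, and the interior Harnack inequality (iterated a bounded number of times) gives $u(Y)\ge c\,u(A)$, contradicting nothing but rather yielding exactly the bound we want. If instead $\delta(Y)$ is small, one slides from $Y$ toward the interior along a chain of interior balls on which $u$ satisfies the interior Harnack inequality, losing only a bounded multiplicative constant before reaching a point at definite distance from $\partial\Omega$, and then applies (H2) as before. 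The main obstacle is the bookkeeping in this last step: one must verify that the number of Harnack balls needed stays bounded by a constant depending only on the geometric constants (this is exactly what the quantitative form of (H2) guarantees) and that all these balls, together with the enlarged balls used in the Moser and weak-Harnack estimates, remain inside $B(0,K'r)\cap\Omega$ — which is what forces the precise (large, but dimensional) choice of $K'$. Since this is verbatim the argument of \cite[Lemma 3.3]{DavDEMM} — the only inputs being \eqref{eqn:moser:zero:bdry}, the weak Harnack inequality with zero data, the interior Harnack inequality, and (H1)--(H2), all of which are available here — I would simply indicate the reduction and cite \cite{DavDEMM} for the details.
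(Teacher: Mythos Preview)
The paper does not prove this lemma; it is stated as \cite[Lemma~3.3]{DavDEMM} and not reproved. Your closing suggestion---to cite \cite{DavDEMM} for the details---is precisely what the paper does.

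Your sketch of the argument, however, has a genuine gap in Step~2. The hypothesis $\int A\nabla u\,\nabla\varphi\le 0$ for all admissible $\varphi\ge 0$ says that $u$ is a Neumann \emph{sub}solution with zero data, not a supersolution; the boundary weak Harnack inequality you invoke is stated for supersolutions (the opposite sign) and therefore does not apply to $u$, so the conclusion $\bigl(\fint_{D(0,\rho)}u^{p_0}\bigr)^{1/p_0}\le C\inf_{D(0,\rho/2)}u$ is not available here. The only supersolution information at your disposal is interior, coming from $-\Div(A\nabla u)=0$ in $\Omega\cap B(0,K'r)$: you may use the interior weak Harnack on the corkscrew ball $B(A_{r/4}(0),cr)\subset\Omega$, but not on a region that meets $\partial\Omega$. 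Without Step~2, your Step~3 also breaks down: reaching $A_{r/4}(0)$ from an arbitrary near-boundary point $Y$ via interior Harnack chains costs a factor $C^{\log(r/\delta(Y))}$, which is unbounded as $\delta(Y)\to 0$. The actual passage in \cite{DavDEMM} from the boundary Moser bound \eqref{eqn:moser:zero:bdry} to \eqref{eqn:control:sol:cork} must therefore link the $L^p$ average over $D(0,r/2)$ to $u(A_{r/4}(0))$ by a different route, using only interior tools on the corkscrew ball together with (H1)--(H2).
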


We can now improve the Harnack inequality of Theorem \ref{rob_ha} for solutions to the Robin problem with zero Robin boundary data, or with positive boundary data that is dominated by the solution itself.

\begin{thm}[Harnack inequality at small scales]\label{thm:harnack:small}

There exist $c_0 \in (0,1)$, $K>1$, and $\tilde{K}>1$, depending only on the ellipticity constant of $A$, the geometric constants of $(\Omega, \sigma)$, and $q$, such that the following holds.

Let $0 \in \partial \Omega$, $0 < r < \diam(\Omega)(8K\Tilde{K})^{-1}$, and let $u$ be a positive Robin solution to \eqref{eqn:robin:sol} in $B(0,8K\Tilde{K}r)$ with data $0 \le f \le u$ on $\Delta(0,8K\Tilde{K}r)$. Assume that

\begin{equation}\label{eqn:small:scale}
 r^{2-n+d-\frac{d}{q}}\|a\|_{L^q(\Delta(0,2Kr))} \le c_0.
\end{equation}

Then

\begin{equation}\label{eqn:harnack:small:scales}
\sup_{B(0,\frac{r}{2}) \cap \Omega} u \le C \, \inf_{B(0,\frac{r}{2}) \cap \Omega} u.
\end{equation}
Here  $C$ depends only on the ellipticity constant of $A$, the geometric constants of $(\Omega, \sigma)$, and $q$.

\end{thm}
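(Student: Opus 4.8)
After rescaling we may assume $r=1$: both hypothesis \eqref{eqn:small:scale} and the conclusion \eqref{eqn:harnack:small:scales} are scale invariant, and recall that every Robin solution is locally bounded (Lemma~\ref{lem:moser}) and, by Theorem~\ref{thm:holder:rob}, continuous up to $\partial\Omega$ on the relevant ball. The starting observation is that, because $0\le f\le u$, the function $u$ is simultaneously a Neumann subsolution with data $\tau_1:=af$, with $0\le\tau_1\le a\,\Tr u$ (this is \eqref{eqn:positive:part:Robin}, since $u_+=u$), and a Neumann solution — hence supersolution — with data $\tau_2:=a(f-u)$, with $|\tau_2|\le a\,\Tr u$. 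In either case $\|\tau_i\|_{L^q(\Delta(0,s))}\le\|\Tr u\|_{L^\infty(\Delta(0,s))}\,\|a\|_{L^q(\Delta(0,s))}$, and since $s\mapsto s^{2-n+d-\frac dq}\|a\|_{L^q(\Delta(0,s))}$ is non-decreasing (because $2-n+d-\frac dq>0$ by the choice of $q$) while \eqref{eqn:small:scale} controls it at $s\sim 1$, the scale-invariant size of the data on $\Delta(0,s)$ is $\le c_0\sup_{D(0,s)}u$ for all $s$ below a fixed fraction of $1$.

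Next I would combine the local boundedness estimate \eqref{eqn:moser:bdry}, applied to $u=u_+$ with the exponent $p=p_0$ furnished by the weak Harnack inequality, with the weak Harnack inequality \eqref{eqn:weak:harnack} itself. Matching radii and using the bounds on $\tau_1,\tau_2$ above, this yields, for every $\rho$ below a fixed fraction of $1$, an estimate of the form
\begin{equation*}
\sup_{D(0,\rho)}u\le C\inf_{D(0,\rho)}u+Cc_0\sup_{D(0,\Lambda\rho)}u ,
\end{equation*}
where $\Lambda$ and $C$ depend only on the geometric constants, the ellipticity and $q$. This is a Harnack inequality \emph{up to} the data error $Cc_0\sup_{D(0,\Lambda\rho)}u$; since that supremum sits on a larger ball than the one on the left, it cannot be absorbed directly, and disposing of it is the heart of the matter.

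The device for absorbing the error term is comparison with a corkscrew value, via Lemma~\ref{lem:control:sol:cork}. On $B(0,K'\rho)\cap\Omega$, with $K'$ as in that lemma, one writes $u\le g+h$, where $g$ is the zero–Neumann–data solution agreeing with $u$ on the inner sphere $\partial B(0,K'\rho)\cap\Omega$ — a nonnegative zero–data Neumann subsolution, so Lemma~\ref{lem:control:sol:cork} gives $\sup_{D(0,\rho/4)}g\le C\,g(A_{\rho/4}(0))$ — and $h$ is the mixed solution with Neumann data $af$ vanishing on $\partial B(0,K'\rho)\cap\Omega$, whose energy, hence its $L^1$–average, hence (through \eqref{eqn:moser:bdry}) its supremum, is controlled by the scale-invariant size of $af$, i.e.\ by $Cc_0\sup_{D(0,K'\rho)}u$. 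This uses an elementary comparison principle for such sub/supersolutions. Feeding this back, and using that corkscrew values at scales differing by a bounded factor are comparable (bounded Harnack chains), one obtains $\sup_{D(0,\rho)}u\le C\,u(A_\rho(0))+Cc_0\sup_{D(0,\Lambda\rho)}u$ and then iterates across the finitely many dyadic scales between $\sim 1$ and $\sim 8K\Tilde{K}$ — this is exactly why $\Tilde{K}$ is chosen large — so that, once $c_0$ is small, the geometric factor $(Cc_0)^N$ beats the accumulated constants and one concludes $\sup_{D(0,1/2)}u\le C\,u(A)$ for a corkscrew point $A$ for $0$ at scale $\sim 1$.

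To finish, apply the weak Harnack inequality \eqref{eqn:weak:harnack} once more: an $L^{p_0}$–average of $u$ over $D(0,1)$ dominates $u(A)^{p_0}$ by interior Harnack, so it gives $u(A)\le C\inf_{D(0,1/2)}u+Cc_0\sup_{D(0,1)}u$, and the error is absorbed using $\sup_{D(0,1)}u\le C\,u(A)$ just proved together with the smallness of $c_0$ (shrinking $c_0$ a last time if necessary). Combining $\sup_{D(0,1/2)}u\le C\,u(A)\le C\inf_{D(0,1/2)}u$ is \eqref{eqn:harnack:small:scales}. I expect the main obstacle to be precisely the bookkeeping in the previous paragraph: closing the chain of scales and absorbing the data-dependent error without losing a scale-dependent constant in the corkscrew comparison, which is where both the smallness hypothesis \eqref{eqn:small:scale} and the freedom to take $\Tilde{K}$ large are essential.
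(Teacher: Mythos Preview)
Your proposal misses the key simplification that makes the argument a one–liner, and in its place introduces a decomposition step that is both unjustified and unnecessary.

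The crucial observation you overlook is that the hypothesis $0\le f\le u$ makes the Neumann data $a(f-u)$ \emph{nonpositive}. Hence $u$ itself is a nonnegative zero--data Neumann subsolution on the full ball $B(0,K'\,2Kr)$, and Lemma~\ref{lem:control:sol:cork} applies \emph{directly to $u$}: $\sup_{D(0,2Kr)}u\le C\,u(A_{2Kr}(0))$. There is no need to write $u\le g+h$. Your decomposition is in fact problematic: the ``zero--Neumann solution $g$ agreeing with $u$ on the inner sphere'' is a mixed Dirichlet--Neumann object whose existence, nonnegativity, and subsolution property on the \emph{larger} ball (which is what Lemma~\ref{lem:control:sol:cork} requires) are nowhere established in this setting; likewise your control of $h$ (``energy, hence $L^1$--average, hence supremum'') is asserted, not proved. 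These are not bookkeeping issues --- the objects and the comparison principle you invoke are simply not available in the paper's framework.

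Once one uses Lemma~\ref{lem:control:sol:cork} directly on $u$, the proof collapses: apply Theorem~\ref{rob_ha} (which already packages Moser plus weak Harnack) to get
\[
\sup_{D(0,r/2)}u\le C\inf_{D(0,r/2)}u+C\,r^{2-n+d-\frac dq}\|a\|_{L^q(\Delta(0,2Kr))}\sup_{\Delta(0,2Kr)}u,
\]
bound the last supremum by $C\,u(A_{2Kr}(0))$ via Lemma~\ref{lem:control:sol:cork}, run a single Harnack chain from $A_{2Kr}(0)$ down to $A_{r/2}(0)\in D(0,r/2)$ (this is what $\Tilde K$ is for), and absorb $C_0c_0\sup_{D(0,r/2)}u$ into the left-hand side by taking $c_0=(2C_0)^{-1}$. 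No iteration across scales is needed.
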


\begin{proof}

We choose $\Tilde{K} \ge K$ larger than the constants appearing in Theorem \ref{rob_ha} and Lemma \ref{lem:control:sol:cork}. By \eqref{eqn:rob:ha}, we have

\[
\sup_{D(0,r/2)} u \le C \, \inf_{D(0,r/2)} u
   + C \, r^{2-n+d-\frac{d}{q}} \|a\|_{L^q(\Delta(0,2Kr))} \sup_{\Delta(0,2Kr)} u.
\]
Since $u$ is also a Neumann subsolution with Neumann data $V = \tau = 0$ in $4K B(0,2Kr)$, by \eqref{eqn:control:sol:cork} we obtain

\[
\sup_{\Delta(0,2Kr)} u \le C \, u\bigl(A_{2Kr}(0)\bigr),
\]
where $A_{2Kr}(0)$ is a corkscrew point for $0$ at scale $2Kr$. Let $A_{\frac{r}{2}}(0)$ be a corkscrew point for $0$ at scale $\frac{r}{2}$, and connect $A_{2Kr}(0)$ and $A_{\frac{r}{2}}(0)$ with a Harnack chain of length $N_K$. If $\Tilde{K}$ is chosen large enough, each ball $B$ in the chain satisfies $2B \subset \Omega \cap B(0,8K\Tilde{K}r)$. The interior Harnack inequality then gives

\[
 u\bigl(A_{2Kr}(0)\bigr) \le C \, u\bigl(A_{\frac{r}{2}}(0)\bigr),
\]
and hence

\[
\sup_{D(0,r/2)} u \le C_0 \, \inf_{D(0,r/2)} u
   + C_0 \, r^{2-n+d-\frac{d}{q}} \|a\|_{L^q(\Delta(0,2Kr))} \sup_{D(0,r/2)} u,
\]
where $C_0$ depends only on the geometric constants of $(\Omega, \sigma)$, the ellipticity constant of $A$, and $q$. Therefore, the choice $c_0:= (2C_0)^{-1}$ yields the desired estimate and completes the proof.

\end{proof}

The results obtained in this section allow us to follow the approach of \cite{DavDEMM} to prove our main theorems.

\section{Harmonic measure and Green functions}

Throughout this section we fix $a \in L^q(\partial \Omega)$ with $q > \dfrac{d}{d-n+2}$, $a \ge 0$, and $a$ non identically zero.

We begin with a weak maximum principle for Robin solutions.

\begin{lem}[Weak maximum principle]\label{lem:max:princ}

Let $f \in L^{\frac{2q}{q-1}}(\partial \Omega)$, and let $u \in W^{1,2}(\Omega)$ be a Robin supersolution, i.e.,

\begin{equation}\label{eqn:rob:supersol:max:princ}
\int_{\Omega} A \nabla u \nabla \varphi + \int_{\partial \Omega} a u \varphi \ge \int_{\partial \Omega} a f \varphi, \qquad \forall\, 0 \le \varphi \in W^{1,2}(\Omega).
\end{equation}
Assume that $f \ge 0$. Then $u \ge 0$.

\end{lem}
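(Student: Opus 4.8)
The plan is the classical Stampacchia truncation argument, adapted to the Robin bilinear form. Set $u_-:=\max(-u,0)=-\min(u,0)\ge 0$. First I would check that $u_-$ is an admissible test function in \eqref{eqn:rob:supersol:max:princ}: since $W^{1,2}(\Omega)$ is stable under $v\mapsto\max(v,0)$ one has $u_-\in W^{1,2}(\Omega)$ with $\nabla u_-=-\mathbf{1}_{\{u<0\}}\nabla u$ a.e., while the trace commutes with truncation, $\Tr(u_-)=(\Tr u)_-=\max(-\Tr u,0)$; moreover all three integrals in \eqref{eqn:rob:supersol:max:princ} are finite for $\varphi=u_-$, the boundary ones by Hölder's inequality since $a\in L^q$ and $\Tr u,\,f\in L^{2q/(q-1)}(\partial\Omega)$ (exactly as in the proof of Theorem \ref{thm:existence}).

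Next I would evaluate each term with $\varphi=u_-$. For the interior term, $A\nabla u\cdot\nabla u_-=-\mathbf{1}_{\{u<0\}}A\nabla u\cdot\nabla u=-A\nabla u_-\cdot\nabla u_-$, so by ellipticity \eqref{ellipticity}, $\int_\Omega A\nabla u\cdot\nabla u_-=-\int_\Omega A\nabla u_-\cdot\nabla u_-\le-\lambda\int_\Omega|\nabla u_-|^2$. On the boundary, $(\Tr u)(\Tr u)_-=-\big((\Tr u)_-\big)^2=-(\Tr u_-)^2$ pointwise, so using $a\ge0$, $\int_{\partial\Omega}a\,u\,u_-=-\int_{\partial\Omega}a(\Tr u_-)^2\le 0$. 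Finally, since $f\ge 0$ and $u_-\ge 0$, $\int_{\partial\Omega}a f\,u_-\ge 0$. Plugging these into \eqref{eqn:rob:supersol:max:princ} yields
\[
0\le\int_{\partial\Omega}a f\,u_-\le\int_\Omega A\nabla u\cdot\nabla u_-+\int_{\partial\Omega}a\,u\,u_-=-\int_\Omega A\nabla u_-\cdot\nabla u_-\;-\;\int_{\partial\Omega}a(\Tr u_-)^2\le 0 ,
\]
so that the middle quantity is zero; as both $\int_\Omega A\nabla u_-\cdot\nabla u_-\ge\lambda\int_\Omega|\nabla u_-|^2\ge 0$ and $\int_{\partial\Omega}a(\Tr u_-)^2\ge0$, we conclude $\int_\Omega|\nabla u_-|^2=0$ and $\int_{\partial\Omega}a(\Tr u_-)^2=0$.

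To finish, I would invoke Lemma \ref{lem:equivalence} applied to $u_-$: the right-hand side of \eqref{equiv:norm:a} vanishes, hence $\|u_-\|_{W^{1,2}(\Omega)}=0$, i.e. $u_-\equiv 0$ and $u\ge 0$. (Equivalently, $\nabla u_-=0$ a.e. and $\Omega$ connected force $u_-\equiv c$ for some constant $c\ge 0$; if $c>0$ then $\int_{\partial\Omega}a(\Tr u_-)^2=c^2\int_{\partial\Omega}a\,d\sigma>0$ because $a$ is not identically zero, a contradiction, so $c=0$.) There is no substantial technical obstacle here; the one place where the structure of the problem is genuinely used is this last step, where the hypothesis that $a$ is not identically zero — equivalently the norm equivalence of Lemma \ref{lem:equivalence} — is essential to rule out negative constants (which solve the homogeneous Neumann problem and would otherwise violate the principle). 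The only other point requiring a word of justification is the admissibility of $u_-$ and the identity $\Tr(u_-)=(\Tr u)_-$, which is standard.
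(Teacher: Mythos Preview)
Your proof is correct and follows essentially the same approach as the paper: test with $u_-$, use ellipticity and $a\ge0$, $f\ge0$ to force $\int_\Omega|\nabla u_-|^2=0$ and $\int_{\partial\Omega}a(\Tr u_-)^2=0$, then conclude via $a\not\equiv0$ (the paper uses your parenthetical ``$u_-\equiv c$'' argument rather than invoking Lemma~\ref{lem:equivalence}, but these are equivalent). Your additional care in justifying the admissibility of $u_-$ and the identity $\Tr(u_-)=(\Tr u)_-$ is more detailed than the paper, which leaves these implicit.
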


\begin{proof}

It suffices to show that $u_{-} \equiv 0$, where we set $u_- := \max(-u,0)$. Since $u u_- = -u_-^{2} \le 0$ and

\[
A \nabla u \nabla u_- = - A \nabla u_- \nabla u_- \le - \lambda |\nabla u_-|^2 \le 0,
\]
we obtain

\[
-\lambda \int_\Omega |\nabla u_-|^2 - \int_{\partial \Omega} a (u_-)^2 \ge \int_{\partial \Omega} a f u_- \ge 0.
\]
Therefore $u_{-}$ is constant in $\Omega$, and its trace vanishes on the set $\{a \ne 0\}$. Since $a$ is not identically zero, we conclude that $u_- \equiv 0$, which completes the proof.

\end{proof}

\begin{cor}\label{cor:max:princ}

Let $f \in L^{\infty}(\partial \Omega)$, and let $u \in W^{1,2}(\Omega)$ be the solution of

\begin{equation}\label{eqn:rob:sol:max:princ}
\int_{\Omega} A \nabla u \nabla \varphi + \int_{\partial \Omega} a u \varphi = \int_{\partial \Omega} a f \varphi, \qquad \forall\, \varphi \in W^{1,2}(\Omega).
\end{equation}
Then, for any $x \in \Omega$, one has $|u(x)| \le \|f\|_{L^\infty(\partial \Omega)}$.

\end{cor}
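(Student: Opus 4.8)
The plan is to reduce the pointwise bound to the weak maximum principle of Lemma~\ref{lem:max:princ} by comparing $u$ with constant functions. Write $M:=\|f\|_{L^\infty(\partial\Omega)}$. Since $\Omega$ is bounded and $\sigma(\partial\Omega)<\infty$ by Ahlfors regularity, we have $f\in L^\infty(\partial\Omega)\subset L^{\frac{2q}{q-1}}(\partial\Omega)$, so the hypotheses of Lemma~\ref{lem:max:princ} are met, and the constant function $M$ lies in $W^{1,2}(\Omega)$.

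First I would note that the constant function $w\equiv M$ is itself a Robin solution with data $\equiv M$: indeed $\nabla w=0$, so $\int_\Omega A\nabla w\nabla\varphi+\int_{\partial\Omega}aw\varphi = M\int_{\partial\Omega}a\varphi=\int_{\partial\Omega}a\,M\,\varphi$ for every $\varphi\in W^{1,2}(\Omega)$. Subtracting the weak formulation \eqref{eqn:rob:sol:max:princ} for $u$ from this identity, the function $v:=M-u\in W^{1,2}(\Omega)$ is a Robin solution with data $M-f$, which is $\ge 0$ a.e.\ on $\partial\Omega$ because $f\le M$. An exact solution is in particular a supersolution in the sense of \eqref{eqn:rob:supersol:max:princ}, so Lemma~\ref{lem:max:princ} gives $v\ge 0$, i.e.\ $u\le M$ a.e.\ in $\Omega$. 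Running the same argument with $M+u$, which solves the Robin problem with data $M+f\ge 0$, yields $u\ge -M$ a.e. Hence $|u|\le M$ a.e.\ in $\Omega$.

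To pass from the a.e.\ inequality to the pointwise statement $|u(x)|\le M$ for every $x\in\Omega$, I would invoke interior regularity: since $u$ is a weak solution of $-\Div(A\nabla u)=0$ in $\Omega$ (test functions supported away from $\partial\Omega$ produce no boundary term), $u$ admits a locally H\"older continuous representative in $\Omega$ by the De Giorgi--Nash--Moser theory, which is also a special case of Theorem~\ref{thm:holder:rob} applied on interior balls; with this representative the a.e.\ bound upgrades to a bound at every interior point. I do not expect a genuine obstacle here; the only points needing a line of care are the integrability of $f$ required to apply Lemma~\ref{lem:max:princ} and the identification of the continuous representative so that ``$u(x)$'' is meaningful.
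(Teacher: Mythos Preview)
Your proof is correct and follows essentially the same approach as the paper: apply Lemma~\ref{lem:max:princ} to $M-u$ and $M+u$ (the paper phrases it as $-u+\|f\|_{L^\infty}$ and $u+\|f\|_{L^\infty}$) and then use regularity to make the a.e.\ bound pointwise. The only cosmetic difference is that the paper invokes Theorem~\ref{thm:holder:rob} to get continuity on all of $\overline{\Omega}$, while you cite interior De Giorgi--Nash--Moser; since the statement only concerns $x\in\Omega$, your route is sufficient (though note that Theorem~\ref{thm:holder:rob} is stated for balls centered on $\partial\Omega$, so calling interior regularity a ``special case'' of it is not quite accurate).
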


\begin{proof}

The unique solution of \eqref{eqn:rob:sol:max:princ} is well defined by Theorem \ref{thm:existence}. By Theorem \ref{thm:holder:rob}, $u$ is continuous on $\overline{\Omega}$, and in particular it is well defined pointwise. Applying Lemma \ref{lem:max:princ} to the solutions $u + \|f\|_{L^\infty(\partial \Omega)}$ and $-u + \|f\|_{L^\infty(\partial \Omega)}$ yields the desired estimate.

\end{proof}

We are now ready to prove the existence of the Robin harmonic measure.

\begin{thm}

There exists a family of Radon measures $\{\omega^x_{R,L}\}_{x \in \overline{\Omega}}$ supported on $\partial \Omega$ such that, for every $f \in C(\partial \Omega)$, the unique weak solution of the Robin problem~\eqref{eqn:rob:sol:max:princ} in $\Omega$ with boundary data $f$, denoted by $u_f$, is given by

\begin{equation}\label{eqn:repr:formula}
 u_f(x) = \int_{\partial \Omega} f(Q) \, d\omega^x_{R, L}(Q).
\end{equation}

\end{thm}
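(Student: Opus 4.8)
The plan is to realize, for each fixed $x\in\overline{\Omega}$, the assignment $f\mapsto u_f(x)$ as a positive, bounded linear functional on $C(\partial\Omega)$ and then invoke the Riesz representation theorem on the compact space $\partial\Omega$. First I would check that the functional is well defined. Since $\partial\Omega$ is bounded and $\sigma$ is $d$-Ahlfors regular, $\sigma(\partial\Omega)<\infty$, so $C(\partial\Omega)\subset L^\infty(\partial\Omega)\subset L^{\frac{2q}{q-1}}(\partial\Omega,\sigma)$; hence Theorem \ref{thm:existence} produces, for every $f\in C(\partial\Omega)$, a unique weak solution $u_f\in W^{1,2}(\Omega)$ of the Robin problem with data $f$ in the sense of \eqref{eqn:sol:exp}. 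By Theorem \ref{thm:holder:rob} (exactly as already used in the proof of Corollary \ref{cor:max:princ}) $u_f$ has a representative that is continuous on $\overline{\Omega}$, so the value $u_f(x)$ is unambiguous for every $x\in\overline{\Omega}$. Set $\Lambda_x f:=u_f(x)$.

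Next I would verify the three properties that feed into Riesz. Linearity of $\Lambda_x$ follows from the uniqueness part of Theorem \ref{thm:existence}: $u_f+u_g$ and $cu_f$ solve \eqref{eqn:sol:exp} with data $f+g$ and $cf$ respectively, hence $u_{f+g}=u_f+u_g$ and $u_{cf}=cu_f$, and evaluation at $x$ preserves these identities. Positivity: if $f\ge 0$ then Lemma \ref{lem:max:princ} gives $u_f\ge 0$ on $\Omega$, so $\Lambda_x f=u_f(x)\ge 0$ by continuity up to $\overline{\Omega}$. Boundedness: Corollary \ref{cor:max:princ} gives $\sup_{\overline{\Omega}}|u_f|\le\|f\|_{L^\infty(\partial\Omega)}=\|f\|_{C(\partial\Omega)}$, whence $|\Lambda_x f|\le\|f\|_{C(\partial\Omega)}$ and $\Lambda_x$ is a continuous functional of norm at most $1$.

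Since $\partial\Omega$ is compact, the Riesz representation theorem then yields, for each $x\in\overline{\Omega}$, a unique (positive) Radon measure $\omega^x_{\Omega,R}$ on $\partial\Omega$ with $\Lambda_x f=\int_{\partial\Omega}f\,d\omega^x_{\Omega,R}$ for all $f\in C(\partial\Omega)$, which is precisely the representation formula \eqref{eqn:repr:formula}. Finally, testing with $f\equiv 1$ — whose Robin solution is the constant function $u\equiv 1$, as one reads directly off \eqref{eqn:sol:exp} (the gradient term vanishes and the two boundary terms coincide) — gives $\omega^x_{\Omega,R}(\partial\Omega)=u_1(x)=1$, so each $\omega^x_{\Omega,R}$ is in fact a probability measure, in particular finite. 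The only points requiring any care here are the continuity of $u_f$ up to the boundary, which legitimizes the pointwise evaluation defining $\Lambda_x$ and is supplied by Theorem \ref{thm:holder:rob}, and the computation of the total mass through the constant solution; I do not anticipate any genuine obstacle beyond assembling these ingredients.
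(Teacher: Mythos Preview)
Your argument is correct and matches the paper's own proof essentially line for line: continuity of $u_f$ on $\overline{\Omega}$ via Theorem~\ref{thm:holder:rob}, linearity from uniqueness in Theorem~\ref{thm:existence}, positivity from Lemma~\ref{lem:max:princ}, boundedness from Corollary~\ref{cor:max:princ}, and then Riesz. The only difference is that you also verify $\omega^x_{\Omega,R}(\partial\Omega)=1$, which the paper defers to the next theorem; your observation that $u\equiv 1$ solves the problem with data $f\equiv 1$ is exactly how the paper handles it there.
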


\begin{proof}

By Theorem \ref{thm:holder:rob}, $u_f$ is continuous on $\overline{\Omega}$. By Theorem \ref{thm:existence}, Lemma \ref{lem:max:princ}, and Corollary \ref{cor:max:princ}, for each $x \in \overline{\Omega}$ the map

\[
C(\partial \Omega) \ni f \mapsto u_f(x)
\]
is linear, positive, and continuous with respect to the $L^\infty(\partial \Omega)$-norm. The Riesz representation theorem therefore yields the desired family $\{\omega^x_{R, L}\}_{x \in \overline{\Omega}}$ and completes the proof.

\end{proof}

\begin{thm}\label{thm:harmonic:meas:solution}

For any $x \in \overline{\Omega}$ the measure $\omega^x_{R, L}$ is a probability measure.

Moreover, the representation formula \eqref{eqn:repr:formula} extends to $f = \mathbf{1}_E$ for every Borel set $E \subset \partial \Omega$. More precisely, the solution $u_{\mathbf{1}_E} \in C(\overline{\Omega})$ of

\begin{equation}\label{eqn:sol:indicator}
\int_{\Omega} A \nabla u \nabla \varphi + \int_{\partial \Omega} a u \, \varphi = \int_{E} a \, \varphi, \qquad \forall\, \varphi \in W^{1,2}(\Omega),
\end{equation}
 satisfies

\[
 u_{\mathbf{1}_E}(x) = \omega^x_{R, L}(E),
\]
for every $x \in \Omega$ and every $x \in \partial \Omega \setminus \partial E$, where $\partial E$ denotes the boundary of $E$ as a subset of $\partial \Omega$.

\end{thm}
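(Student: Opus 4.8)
The plan is to run the classical argument in four stages: identify the total mass of $\omega^x_R$, extend the representation formula \eqref{eqn:repr:formula} from continuous data to indicators of open sets, then to closed sets, then to Borel sets, and finally treat boundary poles separately. Throughout I would use that $u_{\1_E}$, the (unique, by Theorem \ref{thm:existence}, since $\1_E\in L^\infty\subset L^{2q/(q-1)}$) solution of \eqref{eqn:sol:indicator}, is the Robin solution with bounded data $\1_E$, hence lies in $C(\overline\Omega)$ by Theorem \ref{thm:holder:rob}; that $f\mapsto u_f$ is linear; and that $f\le g$ implies $u_f\le u_g$ on $\overline\Omega$ by the maximum principle (Lemma \ref{lem:max:princ}), so in particular $0\le u_{\1_E}\le 1$. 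For the total mass: the constant $u\equiv 1$ solves the Robin problem with data $f\equiv 1$ (both sides of \eqref{eqn:sol:exp} reduce to $\int_{\partial\Omega}a\varphi\,d\sigma$), so $u_1\equiv 1$ and \eqref{eqn:repr:formula} gives $\omega^x_R(\partial\Omega)=u_1(x)=1$ for every $x\in\overline\Omega$; since $\omega^x_R$ is a nonnegative Radon measure it is a probability.

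For an interior pole $x\in\Omega$: if $E$ is relatively open I would pick continuous $f_j\uparrow\1_E$ pointwise (e.g. $f_j=\min(j\,\dist(\cdot,\partial\Omega\setminus E),1)$). Then $f_j\to\1_E$ in $L^{2q/(q-1)}(\partial\Omega,\sigma)$ by dominated convergence, so $u_{f_j}\to u_{\1_E}$ in $W^{1,2}(\Omega)$ by the energy estimate of Theorem \ref{thm:existence}, hence in $L^2(\Omega)$. Since $u_{f_j}-u_{\1_E}$ solves $-\Div(A\nabla w)=0$ in $\Omega$, the interior estimate gives $|u_{f_j}(x)-u_{\1_E}(x)|\le C(\delta(x))\|u_{f_j}-u_{\1_E}\|_{L^2(\Omega)}\to 0$, while $u_{f_j}(x)=\int f_j\,d\omega^x_R\uparrow\omega^x_R(E)$ by monotone convergence; hence $u_{\1_E}(x)=\omega^x_R(E)$. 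Taking complements (and using $u_1\equiv 1$) gives the same for closed $E$, and then an inner/outer regularity squeeze for the finite Borel measure $\omega^x_R$ — choosing closed $F\subset E\subset U$ open with $\omega^x_R(U\setminus F)<\varepsilon$ and using $u_{\1_F}\le u_{\1_E}\le u_{\1_U}$ — handles an arbitrary Borel set $E$.

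For a boundary pole $x_0\in\partial\Omega\setminus\partial E$ (boundary relative to $\partial\Omega$): if $x_0$ lies in the relative interior of $E$, I would fix $\rho>0$ with $\Delta(x_0,2\rho)\subset E$ and split $\1_E=\1_{\Delta(x_0,2\rho)}+\1_{E\setminus\Delta(x_0,2\rho)}$; using linearity, additivity of $\omega^{x_0}_R$, and $\1_{\Delta(x_0,2\rho)}=1-\1_{\partial\Omega\setminus\Delta(x_0,2\rho)}$ (with $u_1\equiv1$), this reduces everything to Borel sets at positive distance from $x_0$, and the remaining case $x_0\notin\overline E$ is already of that type. So suppose $\dist(x_0,E)=2\rho>0$ and first let $E$ be open; choose continuous $f_j\uparrow\1_E$ with $f_j\equiv 0$ on $\Delta(x_0,\rho)$. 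Then $w_j:=u_{\1_E}-u_{f_j}=u_{\1_E-f_j}$ satisfies $0\le w_j\le 1$, and since $\1_E-f_j\equiv 0$ on $\Delta(x_0,\rho)$ it is a Robin solution with \emph{zero} data in $B(x_0,\rho)$; Theorem \ref{thm:holder:rob} then gives $|w_j(x_0)-w_j(y)|\le C_\rho|x_0-y|^\alpha$ for $y\in B(x_0,c\rho)\cap\Omega$, with $C_\rho,\alpha,c$ independent of $j$ (the $L^2$- and $L^q$-norms appearing in that estimate are controlled by $\|w_j\|_{L^\infty}\le 1$ together with the Ahlfors regularity of $\sigma$ and $|\,\cdot\,\cap\Omega|$). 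Given $\varepsilon>0$, fix an interior point $y$ so close to $x_0$ that $C_\rho|x_0-y|^\alpha<\varepsilon/2$, then $J$ so large that $w_j(y)=\int(\1_E-f_j)\,d\omega^y_R<\varepsilon/2$ for $j\ge J$ (monotone convergence, using the interior case at $y$); hence $w_j(x_0)<\varepsilon$, i.e. $u_{f_j}(x_0)\to u_{\1_E}(x_0)$. Since $u_{f_j}(x_0)=\int f_j\,d\omega^{x_0}_R\uparrow\omega^{x_0}_R(E)$, we get $u_{\1_E}(x_0)=\omega^{x_0}_R(E)$. The same with $g_j\downarrow\1_E$ supported away from $\Delta(x_0,\rho)$ handles closed $E$ at positive distance from $x_0$, and the squeeze of the previous paragraph (with $F,U$ taken at positive distance from $x_0$) finishes the general Borel case.

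The hard part is exactly this last stage. At a boundary pole one only gets one-sided inequalities for free — $\1_E$ is merely semicontinuous while $u_{\1_E}$ is continuous, so monotone approximation always pins $u_{\1_E}(x_0)$ only between $\omega^{x_0}_R$ of $E$ and of its closure/interior. Closing the gap is where the hypothesis $x_0\notin\partial E$ enters: it lets one localize and recognize the error $u_{\1_E}-u_{f_j}$ as a solution with vanishing Robin data near $x_0$, so that the uniform (in $j$) boundary Hölder estimate of Theorem \ref{thm:holder:rob} transports the interior convergence $u_{f_j}(y)\to u_{\1_E}(y)$ up to $x_0$. The rest (the total mass, the interior poles, the passage open $\to$ closed $\to$ Borel) is routine once the boundary regularity results of Sections 4–5 are in hand.
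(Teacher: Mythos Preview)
Your argument is correct, but it follows a genuinely different path from the paper's. The paper fixes one interior reference pole $x_0$, chooses compact/open sandwiches $K_i\subset E\subset U_i$ with both $\omega^{x_0}_R(U_i\setminus K_i)$ and $\sigma(U_i\setminus K_i)$ small, and then uses the interior Harnack inequality together with the boundary Harnack at small scales (Theorem~\ref{thm:harnack:small}) to propagate the smallness $\omega^{x_0}_R(U_i\setminus K_i)<1/i$ to \emph{every} pole $x$ (with a constant depending on $x$). The continuous approximants $u_{f_i}$ are shown to be equi-H\"older, and Ascoli--Arzel\`a produces a uniform limit $u_\infty$ which is then identified both with $u_{\1_E}$ (passing to the limit in the weak formulation, using the $\sigma$-smallness) and with $\omega^{\,\cdot}_R(E)$ (using the $\omega_R$-smallness). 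Interior and boundary poles are handled in one stroke.

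You instead avoid compactness entirely: for interior poles you rely on the energy estimate of Theorem~\ref{thm:existence} plus interior Moser to get pointwise convergence directly; for boundary poles you localize, recognize the error $w_j=u_{\1_E}-u_{f_j}$ as a Robin solution with \emph{zero} data near $x_0$, and use the uniform boundary H\"older estimate of Theorem~\ref{thm:holder:rob} (rather than Theorem~\ref{thm:harnack:small}) to transport convergence from a nearby interior point to $x_0$. Your open $\to$ closed $\to$ Borel staging and the reduction via $\1_{\Delta(x_0,2\rho)}=1-\1_{\partial\Omega\setminus\Delta(x_0,2\rho)}$ are clean and make the role of the hypothesis $x_0\notin\partial E$ more transparent. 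The trade-off is that the paper's route is more unified and simultaneously showcases the small-scale Harnack inequality that drives the later absolute-continuity theorems, while yours is more elementary and modular but requires a bit more case analysis.
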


\begin{proof}

We first show that $x \mapsto \omega^x_{R, L}(E)$ is a Robin solution with boundary data $\mathbf{1}_E$. Fix $x_0 \in \Omega$. Since both $\omega^{x_0}_{R, L}$ and $\sigma$ are Radon measures supported on $\partial \Omega$, for each $i \in \mathbb{N}$ there exist an open set $U_i$ and a compact set $K_i$ such that $K_i \subset E \subset U_i$ and

\[
 \omega^{x_0}_{R, L}(U_i \setminus K_i) < \frac{1}{i}, \\ 
 \sigma(U_i \setminus K_i) < \frac{1}{i}.
\]
We may also assume that

\[
 \{y \in E : d(y,\partial E) \ge \tfrac{1}{i}\} \subset K_i \subset K_{i+1}
\]
and

\[
 U_{i+1} \subset U_i \subset \{y \in \partial \Omega : d(y,\overline{E}) < \tfrac{1}{i}\}.
\]
With this choice we have $\bigcap_{i \in \mathbb{N}} \overline{U_i} \subset \overline{E}$ and

\[
 \overline{E} \setminus \partial E \subset \bigcup_{i \in \mathbb{N}} (K_i \setminus \partial K_i).
\]
Now fix any $x \in \overline{\Omega}$. Since $U_i \setminus K_i$ is open, the Riesz representation theorem gives

\[
 \omega^x_{R, L}(U_i \setminus K_i)
   = \sup \{ u_f(x) : f \in C_c(U_i \setminus K_i),\ 0 \le f \le 1 \}.
\]

Let $f \in C_c(U_i \setminus K_i)$ with $0 \le f \le 1$, and let $u_f$ be the corresponding solution. If $x \in \Omega$, then by the interior Harnack inequality (see, e.g., \cite[Theorem 4.17]{HanL00}) and the Harnack chain condition, there exists a constant $C_{x,x_0}$, depending only on $x$, $x_0$, the geometric constants of $(\Omega, \sigma)$, and the ellipticity constant of $A$, such that

\[
 u_f(x) \le C_{x,x_0} \, u_f(x_0).
\]
Using also the boundary Harnack inequality at small scales \eqref{eqn:harnack:small:scales}, the same estimate holds for all $x \in \partial \Omega \setminus B_i$, where we set

\[
 B_i := \overline{U_i} \setminus (K_i \setminus \partial K_i).
\]
By the representation formula \eqref{eqn:repr:formula} we have

\[
 u_f(x_0) = \int_{U_i \setminus K_i} f \, d\omega^{x_0}_{R, L} \le \omega^{x_0}_{R, L}(U_i \setminus K_i) \le \frac{1}{i}.
\]
Taking the supremum over $f$, we obtain

\begin{equation}\label{eqn:harmonic:measure:harnack}
 \omega^{x}_{R, L}(U_i \setminus K_i) \le \frac{C_{x,x_0}}{i},
 \qquad \forall\, x \in (\Omega \cup \partial \Omega) \setminus B_i,\ \forall\, i \in \mathbb{N}.
\end{equation}

Next, let $f_i \in C(\partial \Omega)$ be supported in $U_i$, with $0 \le f_i \le 1$ and $f_i \equiv 1$ on $K_i$, and set $u_i := u_{f_i}$. By Corollary \ref{cor:max:princ} we have $|u_i| \le 1$. Since both $\|u_i\|_{L^\infty(\Omega)}$ and $\|f_i\|_{L^\infty(\partial \Omega)}$ are bounded by $1$, Theorem \ref{thm:holder:rob} implies that the family $\{u_i\}_{i \in \mathbb{N}}$ is equi-H\"older continuous on $\overline{\Omega}$. Hence, by the Arzel\`a–Ascoli theorem, there exists $u_\infty \in C(\overline{\Omega})$ such that, up to a subsequence, $u_i$ converges uniformly to $u_\infty$ on $\overline{\Omega}$.

Moreover, by Theorem \ref{thm:existence} there exists a constant $C$, independent of $i$, such that

\[
 \|\nabla u_i\|_{L^2(\Omega)} \le C \|f_i\|_{L^{\frac{2q}{q-1}}(\partial \Omega)} 
 \le C \, \sigma(\partial \Omega)^{\frac{q-1}{2q}}.
\]
Thus $u_\infty \in W^{1,2}(\Omega)$, with $\nabla u_i \rightharpoonup \nabla u_\infty$ in $L^2(\Omega)$ and $u_i \to u_\infty$ in $C(\overline{\Omega})$. In particular, since $u_i$ and $u_\infty$ are continuous in $\overline{\Omega}$ , we have

\[
 \Tr(u_i) = u_i|_{\partial \Omega} \to u_\infty|_{\partial \Omega} = \Tr(u_\infty)
\]
in $L^\infty(\partial \Omega)$. Furthermore, $0 \le f_i \le 1$ and $f_i \to \mathbf{1}_E$ $\sigma$-almost everywhere on $\partial \Omega$. Passing to the limit in the equation satisfied by $u_{f_i}$, we conclude that $u_\infty$ solves \eqref{eqn:sol:indicator}.

By \eqref{eqn:harmonic:measure:harnack}, for any $x \in \Omega$ we also have

\begin{equation}\label{eqn:approx:meas}
 |u_i(x) - \omega^x_{R, L}(E)| 
 \le \int_{E \setminus K_i} |f_i - 1| \, d\omega^x_{R, L}
    + \int_{U_i \setminus E} f_i \, d\omega^x_{R, L}
 \le 3 \, \omega^x_{R, L}(U_i \setminus K_i) \to 0.
\end{equation}
Hence $u_\infty(x) = \omega^x_{R, L}(E)$ for all $x \in \Omega$, which proves the extension of the representation formula to $f = \mathbf{1}_E$ for interior points.

By construction, the sets $B_i$ are nested and satisfy $\bigcap_{i \in \mathbb{N}} B_i \subset \partial E$. Therefore, for any $x \in \partial \Omega \setminus \partial E$ there exists $i_0 \in \mathbb{N}$ such that \eqref{eqn:harmonic:measure:harnack} holds at $x$ for all $i \ge i_0$. Arguing as in \eqref{eqn:approx:meas}, we again obtain $u_\infty(x) = \omega^x_{R, L}(E)$ for such $x$, which completes the extension of the representation formula to $f = \mathbf{1}_E$.

It remains to show that $\omega^x_{R, L}(\partial \Omega) = 1$ for every $x \in \Omega$. Define

\[
 u(x) := \omega^x_{R, L}(\partial \Omega).
\]
Then $u$ is the solution of the Robin problem with boundary data identically equal to $1$, and hence $u - 1$ is a solution with boundary data identically equal to $0$. By the uniqueness of solutions to the Robin problem (see Theorem~\ref{thm:existence}), we deduce that $u \equiv 1$, which yields $\omega^x_{R, L}(\partial \Omega) = 1$ for all $x \in \Omega$ and completes the proof.

\end{proof}
We gave a representation formula for solutions in terms of the harmonic measure. We can now give another representation formula in terms of the Green function and $\sigma$. This will be the key step in proving the mutual absolute continuity of the harmonic measure with respect to $\sigma$.

\begin{thm}\label{thm:Green}

For every $y\in\Omega$ there exists a nonnegative function $G_{R}(\cdot,y)=G_{R,A}(\cdot,y)\in C(\overline{\Omega}\setminus\{y\})$ such that

\begin{align*}
&G_R(\cdot,y)\in W^{1,2}(\Omega\setminus B(y,r))\cap W^{1,s}(\Omega) \qquad \textup{for any } r>0,\ 1\le s<\frac{n}{n-1},\\ 
&G_R(\cdot,y)\in L^s(\Omega), \qquad s<\frac{n}{n-2},\\
&G_R(x,y)\le C|x-y|^{2-n},\qquad x\ne y,
\end{align*}
and such that for all $\phi\in C^\infty(\R^n)$,

\begin{equation}\label{eqn:Green:dirac}
\int_{\Omega}A\nabla G_R(\cdot,y)\nabla\phi+\int_{\partial\Omega}G_R(\cdot,y)\phi\,a\,d\sigma=\phi(y).
\end{equation}
Here the constant $C$ depends on $\operatorname{diam}\Omega$, the ellipticity constant of $A$, $q$, $\int_{\partial\Omega}a$, $\|a\|_{L^q(\partial\Omega)}$, and the geometric constants of $(\Omega, \sigma)$.

Moreover, if $u_f$ is the weak solution of the Robin problem with data $f\in L^{\frac{2q}{q-1}}$, then

\begin{equation}\label{eqn:Green:rep}
u_f(y)=\int_{\partial\Omega}f(x)G_{R,A^T}(x,y)\,a(x)\,d\sigma(x) \qquad \textup{for every } y\in\Omega.
\end{equation}

\end{thm}

\begin{proof}

We follow \cite{GruW82} and \cite{DavDEMM}.

Let $y\in \Omega$, and, for any $\rho>0$ small, let $G^y_\rho\in W^{1,2}(\Omega)$ be the unique solution to the Robin problem with interior data $g=\mathbf{1}_{B(y,\rho)}\lvert B(y,\rho)\rvert^{-1}$ (see \eqref{eqn:poisson:rob} and Theorem \ref{thm:existence}). In particular, since $b(u,\varphi)\ge 0$ for any $\varphi\ge 0$ in $W^{1,2}(\Omega)$, Lemma \ref{lem:max:princ} yields $G^y_\rho\ge 0$. Because $\mathbf{1}_{B(y,\rho)}\lvert B(y,\rho)\rvert^{-1}\in L^\infty(\Omega)$, and since, away from $y$, $G^y_\rho$ is a weak solution of $-\operatorname{div}(A\nabla G^y_\rho)=0$ with Robin boundary data identically zero, by interior regularity and Theorem \ref{thm:holder:rob} we have $G^y_\rho\in C(\overline{\Omega})$. We want to take the limit as $\rho\to 0$ in an appropriate sense. The first step is to prove that $\|G^y_\rho\|_{L^{\frac{n}{n-2},\infty}(\Omega)}$ is bounded by a constant independent of $\rho$.

For $t>0$ set $\Omega_t:=\{x\in\Omega:G^y_{\rho}(x)>t\}$. We need to prove

\begin{equation}\label{eqn:weak:lp:1}
\lvert\Omega_t\rvert\le C t^{-\frac{n}{n-2}},
\end{equation}
with $C$ independent of $t$ and $\rho$. Since $\lvert\Omega\rvert<\infty$, we may assume $t>1$. Using $\varphi:=\bigl(\frac{1}{t}-\frac{1}{G^y_\rho}\bigr)_+\in W^{1,2}(\Omega)$ as a test function in the equation satisfied by $G^y_\rho$, we obtain

\begin{equation}\label{eqn:first:Green:bd}
\int_{\Omega_t}\frac{\lvert\nabla G^y_\rho\rvert^2}{(G^y_\rho)^2}+\int_{\Delta_t}a\frac{G^y_\rho-t}{t}\le C\fint_{B(y,\rho)}\varphi\le Ct^{-1},
\end{equation}
where we used ellipticity and we set $\Delta_t:=\{x\in\partial\Omega:G^y_\rho(x)>t\}$. Set $v:=\bigl(\log(G^y_\rho t^{-1})\bigr)_+$, and notice that $v\ge \log 2$ on $\Omega_{2t}$. Therefore,

\begin{align*}
\lvert\Omega_{2t}\rvert^{\frac{n-2}{n}}&\le C\biggl(\int_\Omega v^{\frac{2n}{n-2}} \biggr)^{\frac{n-2}{n}}
\le C \int_{\Omega}\lvert\nabla v\rvert^2+C\biggl(\int_{\Omega}v\biggr)^2
\\
&\le C \int_{\Omega}\lvert\nabla v\rvert^2+C\biggl(\int_{\partial\Omega} a \lvert v\rvert\biggr)^2\\
&= C\int_{\Omega_t}\frac{\lvert\nabla G^y_\rho\rvert^2}{(G^y_\rho)^2}+ C\biggl(\int_{\Delta_t} a\log\biggl(\frac{G^y_\rho}{t}\biggr)\biggr)^2\\
&\le C t^{-1}+ C\biggl(\int_{\Delta_t} a\frac{G^y_\rho-t}{t}\biggr)^2\\
&\le C t^{-1}+Ct^{-2}\le Ct^{-1},
\end{align*}
where we used the interior Poincar\'e inequality, Remark \ref{rmk:equivalence:l1}, the inequality $\log(z)\le z-1$, and \eqref{eqn:first:Green:bd}.
Here the constant $C$ depends on $\operatorname{diam}\Omega$, the ellipticity constant of $A$, $q$, $\int_{\partial\Omega}a$, $\|a\|_{L^q(\partial\Omega)}$, and the geometric constants of $(\Omega, \sigma)$.

Notice that, thanks to \eqref{eqn:weak:lp:1}, for any $s<\frac{n}{n-2}$,

\begin{equation}\label{eqn:Green:ls:bound}
\int_{\Omega}(G^y_\rho)^s\le C.
\end{equation}

Moreover, for any $R\le\operatorname{diam}(\Omega)$, if $B_R$ is a ball of radius $R$ centered at some point in $\overline{\Omega}$, then

\[
\fint_{B_R\cap\Omega}G^y_\rho\le CR^{-n}(R^n)^{1-\frac{n-2}{n}}\|G^y_\rho\|_{L^{\frac{n}{n-2},\infty}(\Omega)}\le CR^{2-n}.
\]

Let $x\in \overline{\Omega}$ with $\lvert x-y\rvert\ge 4\rho$, and set $R=\lvert x-y\rvert$. Since $G^y_\rho$ solves $-\operatorname{div}(A\nabla G^y_\rho)=0$ with zero Robin boundary data in $\Omega\setminus \overline{B}(y,\rho)$, using either interior Moser estimates or \eqref{eqn:moser:zero:bdry} we obtain

\begin{equation}\label{eqn:pointwise:estimate:approx}
G^y_\rho(x)\le C \fint_{B_{\frac{R}{4}}}G^y_\rho\le CR^{2-n}=C\lvert x-y\rvert^{2-n}, \qquad \lvert x-y\rvert\ge 4\rho,
\end{equation}
where $B_{\frac{R}{4}}$ is a ball centered either at $x$ or at some point in $\partial\Omega$.

Our pointwise estimate \eqref{eqn:pointwise:estimate:approx}, together with interior H\"older continuity and Theorem \ref{thm:holder:rob}, implies that for any $\epsilon>0$ the family $\{G^y_{\rho}\}_{\rho\le \epsilon/6}$ is uniformly bounded and equi-H\"older continuous on $\overline{\Omega}\setminus B(y,\epsilon)$. By the Arzel\`a–Ascoli theorem and a diagonal argument, there exists a function $G^y\in C(\overline{\Omega}\setminus\{y\})$ such that, up to extracting a subsequence, $G^y_\rho\to G^y=:G_{R, A}(\cdot,y)$ locally uniformly on $\overline{\Omega}\setminus \{y\}$. Moreover, by \eqref{eqn:Green:ls:bound} we have $G^y\in L^s(\Omega)$ for every $s<\frac{n}{n-2}$. We now need to prove that $G^y$ is a Sobolev function.

Let $R\ge 8\rho$, and let $\eta$ be a smooth function satisfying $0\le\eta \le 1$, $\eta\equiv 1$ outside $B(y,R)$, $\eta\equiv 0$ on $B(y,\frac{R}{2})$, and $\lvert\nabla\eta\rvert\le\frac{4}{R}$. Taking $\varphi:=G^y_\rho\eta^2$ as a test function in the equation satisfied by $G^y_\rho$, we get

\begin{equation}\label{eqn:first:Green:grad:bd}
\int_{\Omega\setminus B(y,R)}\lvert\nabla G^y_\rho\rvert^2\le C R^{-2}\int_{B(y,R)\setminus B\bigl(y,\frac{R}{2}\bigr)}(G^y_\rho)^2\le CR^{2-n},
\end{equation}
where we used ellipticity, the fact that $\int_{\partial\Omega}aG^y_\rho\eta^2\ge 0$, and \eqref{eqn:pointwise:estimate:approx}. Then, by the Banach--Alaoglu theorem, we obtain that $G^y\in W^{1,2}(\Omega\setminus B(y,R))$ for any $R>0$.

Using $G^y_\rho$ itself as a test function in the equation solved by $G^y_\rho$, we get

\begin{align*}
\int_{\Omega}\lvert\nabla G^y_\rho\rvert^2+\int_{\partial\Omega}a(G^y_\rho)^2&\le C\fint_{B(y,\rho)} G^y_\rho
\le C\rho^{\frac{2-n}{2}}\biggl(\int_\Omega(G^y_\rho)^{\frac{2n}{n-2}}\biggr)^{\frac{n-2}{2n}}\\
&\le C\rho^{\frac{2-n}{2}}\biggl(\int_\Omega \lvert\nabla G^y_\rho\rvert^2+\int_{\Omega}(G^y_\rho)^2\biggr)^{\frac{1}{2}}\\
&\le C \rho^{\frac{2-n}{2}}\biggl(\int_\Omega \lvert\nabla G^y_\rho\rvert^2+\int_{\partial\Omega}a(G^y_\rho)^2\biggr)^{\frac{1}{2}},
\end{align*}
which implies

\begin{equation}\label{eqn:secnd:Green:grad:bd}
\int_{\Omega}\lvert\nabla G^y_\rho\rvert^2\le C\rho^{2-n}.
\end{equation}

Thus, \eqref{eqn:first:Green:grad:bd} also holds for $R<8\rho$.

For $t>0$ set $\widetilde{\Omega}_t:=\{x\in \Omega:\lvert\nabla G^y_\rho\rvert(x)>t\}$, and let $R:=t^{-\frac{1}{n-1}}$. Using our estimates, we can write

\begin{align*}
\lvert\widetilde{\Omega}_t\rvert&=\lvert\widetilde{\Omega}_t\cap B(y,R)\rvert+\lvert\widetilde{\Omega}_t\setminus B(y,R)\rvert\\
&\le Ct^{-\frac{n}{n-1}}+t^{-2}\int_{\Omega\setminus B(y,R)}\lvert\nabla G^y_\rho\rvert^2\\
&\le Ct^{-\frac{n}{n-1}}+Ct^{-2}(t^{-\frac{1}{n-1}})^{2-n}\le C t^{-\frac{n}{n-1}},
\end{align*}
so that

\[
\|\nabla G^y_\rho\|_{L^{\frac{n}{n-1},\infty}(\Omega)}\le C.
\]
Since $C$ does not depend on $\rho$, we finally obtain that $G^y\in W^{1,s}(\Omega)$ for every $s<\frac{n}{n-1}$ and, up to a subsequence, $\nabla G^y_\rho\rightharpoonup\nabla G^y$ in $L^s(\Omega)$.

We are now ready to finish the proof. Since, up to a subsequence, $\nabla G^y_\rho\rightharpoonup \nabla G^y$ in $L^s(\Omega)$ and $G^y_\rho\to G^y$ in $L^\infty(\partial\Omega)$, we can pass to the limit as $\rho\to 0$ in the equation

\[
\int_{\Omega}A\nabla G^y_\rho\nabla \varphi+\int_{\partial\Omega}a G^y_\rho\varphi=\fint_{B(y,\rho)}\varphi,
\]
obtaining \eqref{eqn:Green:dirac}.

Now let $G^y_{\rho,T}\in W^{1,2}(\Omega)$ be the solution to $b^T(G^y_{\rho,T},v)=\fint_{B(y,\rho)}v$, where $b^T(u,v):=\int_{\Omega}A^T\nabla u\nabla v+\int_{\partial\Omega}a u v$. Since $A^T$ is uniformly elliptic, $G^{y}_{\rho,T}$ has the same properties as $G^y_\rho$. Set $G_{R,A^T}(\cdot, y):=\lim_{\rho\to 0}G^y_{\rho,T}$, and let $f\in L^{\frac{2q}{q-1}}$. Using $G^y_{\rho,T}$ as a test function in the equation satisfied by $u_f$, we have

\[
\int_\Omega A^T\nabla G^y_{\rho,T} \nabla u_f+\int_{\partial\Omega}a u_f G^y_{\rho,T}=\int_{\partial\Omega} a f G^y_{\rho,T}.
\]

Using $u_f$ as a test function in the equation satisfied by $G^y_{\rho, T}$, we obtain that the left-hand side is equal to $\fint_{B(y,\rho)}u_f$. Therefore, taking the limit as $\rho\to 0$, and using that $G^y_{\rho,T}\to G_{R,A^T}(\cdot, y)$ in $L^\infty(\partial\Omega)$, we have, for every Lebesgue point $y$ of $u_f$,

\[
u_f(y)=\int_{\partial\Omega}a f G_{R,A^T}(\cdot, y).
\]

\end{proof}

\begin{proof}[Proof of Theorem \ref{thm:abs:cont}]

For each $X \in \Omega$ and each Borel set $E \subset \partial \Omega$, we have $\omega^{X}_{R, L}(E) = u_{\1_E}(X)$ by Theorem \ref{thm:harmonic:meas:solution}. By \eqref{eqn:Green:rep},

\begin{equation}\label{eqn:harm:Green:rep}
\omega^{X}_{R, L}(E) = \int_{E} G_{R,A^T}(x,X) a(x)\, d\sigma(x) = \int_{E} G_{R,A^T}(x,X)\, d\mu(x),
\end{equation}
and thus $\omega^{X}_{R, L} \ll \mu$. Assume now, by contradiction, that $\omega^{X}_{R, L}(E) = 0$ while $\mu(E) > 0$. Then there is some $x \in E$ such that $G_{R,A^T}(x,X) = 0$. Since $G(\cdot, X)$ is a weak solution to $-\Div(A \nabla G(\cdot, X)) = 0$ away from $X$, with zero Robin boundary data, a repeated application of the Harnack inequality at small scales \eqref{eqn:harnack:small:scales} and of the interior Harnack inequality yields $G(\cdot, X) \equiv 0$. This leads to a contradiction by taking $\varphi \equiv 1$ as a test function in \eqref{eqn:Green:dirac}.

\end{proof}

We now turn to the proof of the quantitative absolute continuity estimates. For any $x_0 \in \partial \Omega$ and $y \in \Omega$, \eqref{eqn:harm:Green:rep} implies that

\begin{equation}\label{eqn:Green:quant:abs:cont}
C^{-1} \frac{\inf\limits_{\Delta(x_0,r)} G_{R,A^T}(\cdot,y)}{\sup\limits_{\Delta(x_0,r)} G_{R,A^T}(\cdot,y)} \frac{\mu(E)}{\mu(\Delta(x_0,r))}
\le  \frac{\omega^{y}_R(E)}{\omega^{y}_R(\Delta(x_0,r))}
\le C \frac{\sup\limits_{\Delta(x_0,r)} G_{R,A^T}(\cdot,y)}{\inf\limits_{\Delta(x_0,r)} G_{R,A^T}(\cdot,y)} \frac{\mu(E)}{\mu(\Delta(x_0,r))},
\end{equation}
where $E \subset \Delta(x_0,r)$ and $\mu(\Delta(x_0,r)) > 0$.

\begin{proof}[Proof of Theorem \ref{thm:abs:cont:small}]

Assume that $\|a\|_{L^q(\Delta(x_0,4r))} r^{2-n+d-\frac{d}{q}} \le 1$ and take some $X \in \Omega \setminus B(x_0,Cr)$. By \eqref{eqn:Green:quant:abs:cont}, in order to prove \eqref{eqn:quant:abs:cont:small} it is enough to show that

\begin{equation}\label{eqn:harnack:Green}
\sup\limits_{B(x_0,r) \cap \Omega} G_{R,A^T}(\cdot,X) \le C \inf\limits_{B(x_0,r) \cap \Omega} G_{R,A^T}(\cdot,X).
\end{equation}

Notice that the existence of such an $X$ implies that $r \le C^{-1} \operatorname{diam}(\Omega)$, and we know that $G(\cdot, X)$ is a solution to \eqref{eqn:robin:sol} with zero right-hand side in $B(x_0, Cr)$. We want to apply either the interior Harnack inequality or \eqref{eqn:harnack:small:scales} to smaller balls $B(x,\lambda r)$ with $x \in \overline{\Omega}$. To this end, we first choose $C$ large enough with respect to $K \tilde{K}$. Recalling that $q$ is chosen so that $2-n+d-\frac{d}{q} > 0$, we have, for $x \in \partial \Omega \cap B(x_0,2r)$ and $\lambda$ small enough,

\begin{align*}
(8\lambda r)^{2-n+d-\frac{d}{q}} \|a\|_{L^q(\Delta(x,16K\lambda r))}
&\le (8\lambda)^{2-n+d-\frac{d}{q}} r^{2-n+d-\frac{d}{q}} \|a\|_{L^q(\Delta(x_0,4r))} \\
&\le c_0 \, r^{2-n+d-\frac{d}{q}} \|a\|_{L^q(\Delta(x_0,4r))} \le c_0,
\end{align*}
where $c_0$ is defined in \eqref{eqn:small:scale}. We can then cover $B(x_0,r) \cap \overline{\Omega}$ with balls $\{B_i\}_{i=1}^N$ of radius $\lambda r$, with $N$ depending only on the geometric constants, on the ellipticity constant, and on $q$. There are two cases: either $2B_i \subset \Omega$ or there exists $x \in B(x_0,2r) \cap \partial \Omega$ such that $B_i \subset B(x,4\lambda r)$. We can then apply either the interior or the boundary Harnack inequality (see \eqref{eqn:harnack:small:scales}) to the function $G_{R, A^T}(\cdot, X)$ in the smaller balls, and we obtain \eqref{eqn:harnack:Green}.

\end{proof}

\begin{proof}[Proof of Theorem \ref{thm:abs:cont:big}]

Arguing as in the proof of Theorem \ref{thm:abs:cont:small}, and setting

\[
A := \|a\|_{L^q(\Delta(x_0,4r))} r^{2-n+d-\frac{d}{q}} \ge 1,
\]
we only need to prove that

\[
\sup\limits_{B(x_0,r) \cap \Omega} G_{R,A^T}(\cdot, X)
\le C A^{\gamma} \inf\limits_{B(x_0,r) \cap \Omega} G_{R,A^T}(\cdot, X).
\]
For $\lambda$ small and $x \in B(x_0,r) \cap \partial \Omega$ we have

\[
(2\lambda r)^{2-n+d-\frac{d}{q}} \|a\|_{L^q(\Delta(x,4K\lambda r))}
\le A (2\lambda)^{2-n+d-\frac{d}{q}} \le c_0,
\]
where the last inequality holds by choosing

\[
\lambda = c \Big(\frac{1}{A}\Big)^{2-n+d-\frac{d}{q}}
\]
for $c$ small enough. With such a choice of $\lambda$, we can fix $x \in \partial \Omega \cap B(x_0,r)$ and apply \eqref{eqn:harnack:small:scales}, obtaining

\[
G_{R,A^T}(z, X) \approx G_{R,A^T}(\xi_x,X), \qquad z \in B(x,\lambda r),
\]
where $\xi_x$ is a corkscrew point for $x$ at scale $\lambda r$. We can then connect the various $\xi_x$ to a corkscrew point $\xi_r$ for $x_0$ at scale $r$ with Harnack chains of length
\[
N \le C \log(\lambda^{-1}) + C \le \log\big(C A^{C(2-n+d-\frac{d}{q})}\big),
\]
and the interior Harnack inequality then completes the proof.

\end{proof}
\vspace{1cm}
During the preparation of the final manuscript, we learned that results intersecting with ours were recently obtained, simultaneously and independently, in \cite{WaYY}. Let us, however, list the key differences. The authors in \cite{WaYY} allow for $n= 2$ and the endpoint exponent $\frac{d}{d-n+2}$, while we are restricted to $n>2$ and integrability larger than $\frac{d}{d-n+2}$. On the other hand, we prove local H\"older continuity for solutions (through a boundary Harnack inequality with general Robin data) and they are restricted to the global H\"older continuity estimates (using boundedness of solutions and some oscillation estimates for local solutions with homogeneous Robin boundary data). As far as the harmonic measure goes, the definitions themselves have a different dependence on the parameter $a$, and so do the resulting quantitative estimates (in particular, ours go across all scales and allow for $a=0$ seamlessly, while the bound \cite{WaYY} applies to small scales only). Yet one has to admit that the two papers are devoted to generalization in roughly the same direction, and quite a few results are translatable. Neither of the teams knew about the work of the other.
\def\cprime{'}
\bibliographystyle{amsalpha}
\bibliography{bibli}
\end{document}